\documentclass[a4paper]{amsart}

\usepackage[all]{xy}

\usepackage{amsmath,amssymb}
\usepackage{amscd,eucal,amsthm}
\usepackage{epsfig}
\CompileMatrices

\newtheorem{theorem}{Theorem}[section]

\newtheorem{lemma}[theorem]{Lemma}
\newtheorem{proposition}[theorem]{Proposition}
\newtheorem{stat}[theorem]{Statement}
\newtheorem{corollary}[theorem]{Corollary}
\theoremstyle{definition}
\newtheorem{definition}[theorem]{Definition}

\newtheorem{remark}[theorem]{Remark}

\numberwithin{equation}{theorem}
%

%
%

%

\def\N{{\mathcal N}}
\def\H{{\mathcal H}}
\def\Hor{{\mathcal Hor}}

\def\PP{{\mathbb P}}
\def\gg{{\mathfrak g}}
\def\ll{{\mathfrak l}}
\def\mm{{\mathfrak m}}
\def\aa{{\mathfrak a}}
\def\qq{{\mathfrak q}}
\def\pp{{\mathfrak p}}
\def\bb{{\mathfrak b}}
\def\ss{{\mathfrak s}}
\def\zz{{\mathfrak z}}
\def\tm{{\mathfrak t}}
\def\hh{{\mathfrak h}}
\def\uu{{\mathfrak u}}

\def\MD{{\mathcal D}}
\def\MX{{\mathcal X}}
\def\MZ{{\mathcal Z}}

\def\codim{{\rm codim}}

\def\Im{{\rm Im}\,}
\def\Ann{{\rm Ann}\,}
\newcommand{\oo}[1]{\mathaccent"7017{#1}}
\newcommand{\X}{\oo{X}}

\newcounter{itemnumber}

\begin{document}

\sloppy

\title[On the Local structure theorem
 ] {On the Local structure theorem and equivariant geometry of cotangent vector bundles}

\keywords{Cotangent bundle, moment map, horosphere, local structure theorem, little Weyl group}
\subjclass[2000]{Primary 14L30; Secondary 53D05, 53D20}

\author[V.~Zhgoon ]{Vladimir S. Zhgoon}
\thanks{Partially supported by RFBR grant 09-01-00287, Grant of the President of Russian Federation supporting young scientists MK-32.2011.1, and Dynasty
 Foundation fellowship}
\address{Science Research Institute of System Studies,
Department of Mathematical problems in informatics, Nakhimovskii prospect 36-1, Moscow,  Russia}
\email{zhgoon@mail.ru}

\begin{abstract} Let $G$ be a connected reductive group acting on an irreducible
normal algebraic variety $X$. We give a slightly improved version
of  local structure theorems obtained by F.Knop and D.A.Timashev that describe an
action of some parabolic subgroup of $G$ on an open subset of
$X$. We also extend various results of E.B.Vinberg and D.A.Timashev on the set of horospheres in $X$.
 We construct a family of nongeneric horospheres in $X$ and a variety $\Hor$ parameterizing
this family, such that there is
a rational $G$-equivariant symplectic covering  of cotangent vector bundles $T^*\Hor \dashrightarrow T^*X$.
As an application we get a  description of the image of the moment map of $T^*X$  obtained by F.Knop by means of geometric methods
that do not involve differential operators.
\end{abstract}

\maketitle

Let $G$ be a connected reductive group acting on an irreducible
normal algebraic variety $X$. In this paper we discuss various
results describing the action of a certain parabolic subgroup of
$G$ on an open subset of $X$. These results are usually called
``local structure theorems''. The first results of that kind were
discovered by F.Grosshans \cite{gr}, and independently by M.Brion,
D.Luna and T.Vust \cite{BLV}. We should mention that the latter theorem was
improved by F.Knop \cite{asymp}. He used it to integrate  the invariant collective motion and
to describe the closures of so-called generic  flats for the class of varieties that he called
non-degenerate. (We recall the definition later.) In \cite{tim} D.A.Timashev proves a
generalization of the local structure theorem and this allows him to
integrate the invariant collective motion (generalizing the ideas
of F.Knop \cite{asymp}   with a  weaker assumption than
non-degeneracy).
 In this paper we give an refined version of the local structure theorem
 obtained by D.A.Timashev. One of the applications of this theorem is to study the closures of
generic flats for some class of varieties, this will be published elsewhere.

The second aim of this paper is to generalize a result of E.B.Vinberg \cite{vin} who constructed a rational Galois
cover  of $T^*X$ for a quasiaffine $X$ by the cotangent bundle to the variety of generic horospheres (this results are also valid for non-degenerate varieties).  By horospheres we call the orbits of all maximal
 unipotent subgroups of $G$ in $X$. It can be observed that the set of generic horospheres (i.e. the generic orbits of maximal unipotent subgroups of G)
  can be supplied a structure of algebraic variety.
The Galois group of this rational cover is equal to the little Weyl group of the variety $X$.
It is well known that this result could not be directly generalized to arbitrary varieties since
  the set of generic horospheres is not good enough
 for this purpose, as can be seen in the case when $X$ is a flag variety.
 These results were substantially generalized by D.A.Timashev for some class of varieties which is wider than non-degenerate
 varieties and which included flag varieties (however the former class does not contain all horospherical varieties).
 In this paper we construct a family
  of degenerate horospheres and a variety $\Hor$ parameterizing them, such that there is a
 rational covering  of the cotangent vector bundles $T^*\Hor \dashrightarrow T^*X$. It is proved that the Galois
 group of this rational covering is the little Weyl group introduced by F.Knop \cite{weylgr}.

The structure of this paper is the following. Section 1 is preliminary, we recall the local structure theorem introduced by F.Knop
and its corollaries.
   In   Section 2
we construct a $Q$-equivariant mapping $\pi_D$  from an
open subset $\X\subset X$ to a generalized flag variety of a Levi
subgroup of $Q$ (here $Q$ is the common stabilizer of the divisors of $B$-semi-invariant
rational functions, which is a
parabolic subgroup of $G$). In   Section 3 we relate the fibers of the introduced mapping $\pi_D$ to a cross section introduced
  by F.Knop.  The map $\pi_D$ allows us to give a refined version of the
local structure theorem in the sense of D.A.Timashev in   Section 4.  In
Section 5 using ideas of F.Knop \cite{kn_Harish} of studing Bia\l{}ynicki-Birula cells
 we construct a foliation of nongeneric horospheres such that the $G$-translate of the
 conormal bundle to this foliation is dense in $T^*X$.
 We note that   in the situations closely related to the topic of the present paper the Bia\l{}ynicki-Birula decomposition
 was also used by D.Luna \cite{Luna} and M.Brion  \cite{Brion}.  Section 6
 is devoted to generalization of the construction of E.B.Vinberg that relates $T^*X$ and the cotangent bundle to the constructed foliation of horospheres.
 In  Section 7 we prove that the Galois group of the rational  covering $T^*\Hor \dashrightarrow T^*X$ is equal to the little Weyl group $W_X$.
 We also give an elementary description of the image of the normalized moment map, we note that our proof does not involve differential operators (cf. Knop \cite{weylgr},\cite{kn_Harish}).
  This work should be considered as a direct
continuation of \cite{asymp},\cite{vin},\cite{tim}.

{\vspace {1ex}}

The author is grateful to D.A.Timashev for  fruitful
discussions that lead to simplification of most of the proofs \footnote{The Remark \ref{Rem_tim} and the ideas of the first proof of Proposition \ref{fiber}, of the first proof of Proposition \ref{image_mu2}
  and of the second proof of Theorem \ref{stab_of_hor} are due to D.A.Timashev, who kindly proposed them after reading a preliminary version of this paper.}
  and for explaining the formula for the normalized moment map. 
  I express my gratitude to M.Brion
for useful discussions and for careful reading  of this work. I am grateful to F.Knop
for his idea of using Bia\l{}ynicki-Birula cells in  Section 5. I would  like to thank the referees for their useful comments that considerably improved the exposition of the paper.
 {\vspace {2ex}}

{\bf Notation and conventions.}

{\vspace {1ex}}

All  varieties are considered over an algebraically closed   field $\Bbb K$ of characteristic zero.
 By Gothic letters we  denote Lie algebras
corresponding to  algebraic groups, denoted by similar capital Latin letters.  Let us choose a $G$-invariant
nondegenerate quadratic Cartan-Killing form on the algebra $\gg$ as a trace form induced
 from a faithful representation of G. This form identifies $\gg$ and $\gg^*$.
Speaking of the action $G:\gg$  (resp. $G:\gg^*$) we always assume that
it is (co)adjoint.  For $\hh\subset \gg$ by $\hh^{\bot}$ we denote the annihilator of $\hh$ in $\gg^*$.

We fix a Borel subgroup $B\subset G$, and a maximal torus
$T\subset B$. Let $B^-$ be the unique Borel subgroup of $G$ such that
$B^-\cap B=T$. By $P\supset B$   we denote a parabolic subgroup.
 By $P^-\supset B^-$ we denote the  parabolic subgroup opposite to $P$.
Denote by $P_u$ (resp. $P^-_u$) the unipotent radical of $P$
(resp. $P^-$).

Let  $\Xi=\Xi(T)$ be the lattice of  characters of $T$. Consider the lattice of one parameter
subgroups $\Lambda=\Lambda(T)$. For $\lambda:\Bbb K^*\rightarrow T$ and a character $\chi\in \Xi$ we have a pairing
$\langle \lambda,\chi \rangle$ defined by the formula $\chi(\lambda(t))=t^{\langle \lambda,\chi \rangle}$ that identifies
$\Lambda$ and $\Xi^*$.
We shall denote the element in  $\Xi^*$ corresponding to $\lambda$ by the same letter, this should not lead to confusion.
We use an additive notation  for the group law in  $\Lambda$ and in $\Xi$.

 Denote by $W=N_G(T)/T$ the Weyl group of $G$.
 $\Delta  $ is the root system of the Lie algebra $\mathfrak g$ corresponding to $T$.
 $\Delta ^{+} (\Delta ^{-})  $ is the system of  positive (negative)
 roots corresponding to the Borel subalgebra  $\mathfrak b \subset \mathfrak g$.
 $\Pi$ is the system of  simple roots.
 We also have the standard decomposition  $\gg=\tm\oplus\bigoplus\limits_{\alpha \in \Delta}\gg_\alpha$
 into the root subspaces. For $\alpha \in \Delta$ let $e_\alpha\in \gg_\alpha$ be the corresponding element of a Chevalley basis,
  $\alpha^{\vee}$ be the corresponding coroot, and  $s_\alpha$  be the corresponding reflection.
$w_0\in W$ is the longest element in the Weyl group.
By $\tm^* / W$ we denote the corresponding geometric quotient of  $\tm^*$ by $W$.
Let $L$ be a Levi subgroup of a parabolic subgroup  $P\supset B$. Assume that
$L$ contains $T$. Then $B_L=L\cap B$ is a Borel subgroup
of $L$. We  denote by $\Delta_L\subset \Delta$ and by
$\Delta^+_L\subset \Delta_L$ the root subsystem corresponding to
$L$ and its set of positive roots. The subset of simple roots in $\Delta^+_L$ is denoted by $\Pi_L$.
  We denote by $C_L\subset \Xi(T)$  the dominant Weyl chamber of $L$ with respect
  to the positive root system $\Delta^+_L$. By $C_L^\circ$ we denote the interior of $C_L$.
For a parabolic subgroup $P$ (resp. $\pp$) containing $T$ (resp.$\tm$) we denote by $\Delta(P_u)$ (resp. $\Delta(\pp_u)$) the subset of roots in $\Delta$ corresponding
to the root decomposition of $\pp_u$.

Consider the simple $G$-module  $V_\chi$ with  highest weight
$\chi$ and its dual $V^*_\chi$. The highest weight vector of
$V_\chi$  is denoted by $\sigma_\chi$ and the lowest weight vector
of $V^*_\chi$ is denoted by $\sigma^*_{-\chi}$, $\langle v,
w\rangle$ is the pairing of $v\in V_\chi$ and $w \in V^*_\chi$.
By ${\rm Wt}(V_\chi)$ we denote the set of weights for the  $T$-action on $V_\chi$.

For an algebraic group $H$ by the superscript $(-)^{(H)}$ we mean $H$-semi-invariants and by $(-)^{(H)}_\chi$ we
mean $H$-semi-invariants of  weight $\chi$. 



Let $G\supset H$ be linear algebraic  groups and  $Z$ be a quasiprojective $H$-variety.  Assume that $Z$ is normal or
 the  quotient map $G\rightarrow G/H$ is locally trivial for Zariski topology.
 Then we may form   a quasi-projective $G$-variety $G*_HZ$, by considering the quotient
 of $G\times Z$ by the action of $H$: $(g,z)\mapsto (gh^{-1},hz)$. The image of a point $(g,z)$ in this quotient we shall denote by $[g*z]$.

For an algebraic group action $G$ on $X$, $\xi x$ is the velocity vector of $\xi \in \gg$ at $x\in  X$,  $\gg x$ is the tangent space to the orbit
$Gx$ in $x$ and $G_x$ is the stabilizer of $x$.
For affine  $X$ and a group $G$, in the case when  the algebra  $\Bbb K[X]^{G}$  of $G$-invariant regular functions on $X$ is finitely generated,
by $X/\! \!/G$ we denote a quotient of $X$ which is equal to ${\rm Spec}\ \Bbb K[X]^{G}$.
If the variety $X$ is smooth we can define the moment map
$\mu_X:T^*X\longrightarrow\gg^*$ (where $T^*X$ is the cotangent
bundle of $X$) by the following formula.
$$\langle\mu_X(\alpha),\xi \rangle=\langle\alpha,\xi x \rangle, \ \forall x\in X,\ \alpha\in T_x^*X, \ \xi \in \gg.$$

Let us recall that for a homogenous variety $X=G/H$ the cotangent bundle can be expressed as:
$$T^*X\cong G*_H(\gg/\hh)^*\cong G*_H\hh^{\bot}.$$
The moment map is induced by the inclusion $\hh^{\bot}\hookrightarrow \gg^*$. Its image   is equal to $G\hh^{\bot}$.

\section{ Local structure theorem}

We begin with some preliminary remarks.
Consider a normal $G$-variety $X$ and a Cartier divisor $D=\sum a_iD_i$, where $D_i$ are
$B$-stable prime Cartier divisors. Let us  call $D$ a $B$-divisor.
We denote by $P[D_i]$ the stabilizer  of   $D_i$. The
stabilizer of the $B$-divisor $D$ is defined as  the intersection of the
stabilizers of its prime components $P[D]=\bigcap_i
P[D_i]$ (it is clearly a parabolic subgroup of $G$).
As we have $P[D+D^{'}]=P[D]\cap P[D^{'}]$,  there exists a
$B$-divisor for which $P[D]$ is absolutely minimal. 
We denote this parabolic subgroup by $P(X)$.

Replacing a divisor $D$ with a sufficiently large multiple $nD$,
we may assume that $D$ is $G$-linearized (\cite{kn}), and in
particular $B$-linearized. Any two $G$-linearizations differ by a character of $G$, we choose one of them.

 To every  $B$-divisor $D=\sum a_iD_i$
   we can associate a weight in the following way. If $D$ is effective and  $G$-linearized  we have  a section $\sigma_D \in
H^0(X,\mathcal O(D))^{(B)}_\chi$ whose scheme of zeros is  $D$, then $\sigma_D$ is a
$B$-semi-invariant vector of some weight $\chi$.  Choose  integers
$n_i$  such that the divisors $n_iD_i$
 are $G$-linearized and the corresponding sections
 $\sigma_{D_i}$ have weights $\chi_{D_i}$. To a  $B$-divisor $D=\sum a_iD_i$ we  associate
the  rational weight  $\chi_D=\sum \frac{a_i}{n_i} \chi_{D_i}$.

\begin{definition}\cite{asymp} Consider a $B$-divisor $D=\sum a_iD_i$. Let $\chi_D$ be the weight of this divisor. We
call $\chi_D$   $P[D]$-regular if $\langle \chi_D,\alpha^\vee \rangle\neq 0 \
\text{for all} \ \alpha\in \Delta(P[D]_u)$. 
\end{definition}

\begin{remark} We recall that $\langle \chi_D,\alpha^\vee \rangle= 0$
for all $\alpha\in \Delta_{L[D]}$, where $L[D]\subset P[D]$ is the Levi subgroup containing $T$.
We also note that every effective divisor $D$ with stabilizer $P[D]$ is  $P[D]$-regular.
This is a standard fact of the representation theory applied to the $B$-semi-invariant section
 $\sigma_D\in H^0(X,\mathcal O(D))^{(B)}_{\chi_D}$.
\end{remark}

Let us recall from \cite{asymp} the definition of non-degenerate varieties:

\begin{definition} A  $G$-variety $X$ is called non-degenerate if there exists a rational $B$-semi-invariant
function $f_\chi\in \Bbb K(X)^{(B)}_{\chi}$ with  divisor $D=(f_\chi)$ such that 
 $\chi$ is $P(X)$-regular.
\end{definition}

Given a $B$-divisor, we can consider the following  $P[D]$-equivariant map
(\cite{asymp}):

$$\psi_D:X \setminus D \longrightarrow \mathfrak g^*: \ x\mapsto l_x,
 \ \  where \ \ l_x(\xi)=\sum \limits_i a_i\frac{\xi\sigma_i}{\sigma_i}(x).$$


By the next lemma  we may assume that
 the $B$-invariant divisor in consideration is Cartier.

\begin{lemma}\label{D_CART}(\cite [Lemma 2.2]{asymp}) Let $X$ be a normal $G$-variety and $D\subset X$ a
prime divisor. Then $D$ is a Cartier divisor outside $Y=\bigcap
\limits_{g \in G} gD$.
\end{lemma}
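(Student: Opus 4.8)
The plan is to produce, for every point $x\notin Y:=\bigcap_{g\in G}gD$, an explicit local equation of $D$ near $x$. If $D$ is $G$-stable there is nothing to prove, since then $Y=D$ and $D$ is trivially Cartier on $X\setminus D$ (local equation $1$); so I may assume $D$ is not $G$-stable.

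The key input is that a connected (linear, hence affine) algebraic group acting on a normal variety acts trivially on its divisor class group; in particular $gD$ is linearly equivalent to $D$ for every $g\in G$. This is a standard fact; alternatively it follows by joining $e$ to $g$ by a chain of rational curves inside $G$, pulling the flat family $\{hD\}_{h\in G}$ (given by $\{(h,x):x\in hD\}\cong G\times D$) back to each such curve $C'$, and invoking rigidity of the divisor class in families over a rational curve together with $\Cl(X)=\Cl(X\setminus\Sing X)$. Granting this, for each $g\in G$ I may fix $h_g\in\Bbb K(X)^{*}$ with $\div(h_g)=gD-D$.

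Now take $x\notin Y$. By definition of $Y$ there is $g_0\in G$ with $x\notin g_0D$; put $U:=X\setminus g_0D$, an open neighbourhood of $x$. On $U$ the component $g_0D$ disappears, so $\div(h_{g_0})|_{U}=-D|_{U}$, whence $\div(h_{g_0}^{-1})|_{U}=D|_{U}\ge 0$. Since $X$ is normal, a rational function with effective divisor on $U$ is regular on $U$, so $h_{g_0}^{-1}\in\MO_X(U)$; and for a regular function $f$ on a normal variety one has $(f)=\MO_U(-\div f)$, so the principal ideal $(h_{g_0}^{-1})$ equals $\MO_U(-D)$. Thus the ideal sheaf $\MO_X(-D)|_{U}$ is principal, i.e.\ $D$ is Cartier at $x$. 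As $x\notin Y$ was arbitrary, $D$ is Cartier on $X\setminus Y$, which is the assertion.

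The only non-formal ingredient is the triviality of the $G$-action on $\Cl(X)$ (equivalently $gD\sim D$); once this is granted the rest is routine bookkeeping with Weil divisors on a normal variety. An alternative route, avoiding this input, would be to show directly that the non-Cartier locus $Z$ of $D$ — closed, contained in $D$, properly so by normality, and contained in $\Sing X$ — is $G$-stable, hence contained in the largest $G$-stable closed subset $Y$ of $D$; but proving $G$-stability of $Z$ seems to require essentially the same fact, so I expect the class-group statement to be the real heart of the matter.
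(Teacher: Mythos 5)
The paper does not reproduce a proof of this lemma; it only cites Knop's \cite[Lemma~2.2]{asymp}, and your argument is essentially Knop's. The heart of the matter is exactly what you identify: for a connected algebraic group $G$ acting on a normal variety $X$ one has $gD\sim D$ in $\Cl(X)$ for every $g\in G$ (reduce to $\Pic(X_{\reg})$ via $\Cl(X)=\Cl(X_{\reg})$ and invoke the linearizability results of \cite{kn}), and once that is granted the local computation you carry out on $U=X\setminus g_0D$ — that $h_{g_0}^{-1}$ is regular there and generates $\MO_U(-D)$ — is correct and is precisely what Knop does. Your closing remark is also on target: the non-Cartier locus of $D$ is \emph{not} $G$-stable in general (it is the non-Cartier locus of $D$, not of $gD$), so the ``alternative route'' does not circumvent the class-group input.
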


Later for   $D$ we shall take a  $B$-invariant and not $G$-invariant divisor. Thus $Y$ is a proper subset of $D$,
so we may shrink the variety and consider $X \setminus
Y$ instead of $X$.

Let us recall the version of the local structure theorem obtained by
Knop.

\begin{theorem}\label{lst}(\cite[Thm. 2.3, Prop. 2.4]{asymp}) Let $X$  be a normal $G$-variety
with a $B$-divisor $D$. Assume that $\chi_D$ is
$P[D]$-regular. Then:

(i) The image of $\psi_D$ is a single $P[D]$-orbit equal to $\chi_D+\pp_u$.

(ii) For some $x_0 \in X\setminus D$ let
$$
\eta_0:=\psi_D(x_0), \ \ \ \ L:=G_{\eta_0}, \ \ \ \  Z_0 :=\psi_D^{-1}(\eta_0).
$$
Then $L$ is a Levi subgroup of $P[D]$ and there is an isomorphism
$$
P[D]*_L Z_0 \longrightarrow X\setminus D
$$

(iii) Suppose that $P[D]=P(X)$. Then the
kernel $L_0$ of the action of $L$ on $Z_0$ contains the commutator
subgroup $[L,L]$.
\end{theorem}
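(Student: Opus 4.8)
The plan is to prove the three parts in order, treating (i) as the computational heart and then deducing (ii) from a general fibration criterion.

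For part (i), I would start from the defining formula $l_x(\xi)=\sum_i a_i\,\tfrac{\xi\sigma_i}{\sigma_i}(x)$ and observe that $\psi_D$ is $P[D]$-equivariant for the coadjoint action, since each $\sigma_i$ is a $B$-semi-invariant (hence $P[D_i]$-semi-invariant) section and the logarithmic derivative $\tfrac{\xi\sigma_i}{\sigma_i}$ transforms accordingly. So it suffices to understand a single value $\psi_D(x_0)$. I would use the $G$-linearization to compute $l_{x_0}$ explicitly: for $\xi=t\in\tm$ one gets $\langle l_{x_0},t\rangle=\langle\chi_D,t\rangle$, and for $\xi=e_\alpha$ with $\alpha\in\Delta^+$, the section $\sigma_i$ being a $\bb$-highest weight vector forces $e_\alpha\sigma_i$ to be a sum of lower-weight vectors, which after dividing by $\sigma_i(x_0)$ and summing still lies in the appropriate weight space; a short weight-counting argument shows $l_{x_0}\in\chi_D+\pp_u$ as an element of $\gg^*\cong\tm^*\oplus\bigoplus_\alpha\gg_\alpha^*$ (using $\pp_u^{\perp}$ versus $\pp_u$ carefully via the Cartan–Killing identification). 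Conversely, $P[D]$-equivariance and the fact that $\chi_D$ is $P[D]$-regular — so that the $P[D]$-orbit of $\chi_D$ is exactly $\chi_D+\pp_u$ (the stabilizer being the Levi $L$ precisely because $\langle\chi_D,\alpha^{\vee}\rangle\neq 0$ for all $\alpha\in\Delta(P[D]_u)$ and $=0$ for $\alpha\in\Delta_{L[D]}$ by the Remark) — gives that the image is a single $P[D]$-orbit equal to $\chi_D+\pp_u$. The main obstacle here is the sign/identification bookkeeping between $\pp_u$, $\pp_u^-$, and their annihilators under the trace form; this needs to be done carefully but is routine.

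For part (ii): fix $x_0$, set $\eta_0=\psi_D(x_0)$, $L=G_{\eta_0}$, $Z_0=\psi_D^{-1}(\eta_0)$. Part (i) plus the orbit–stabilizer theorem identifies $L$ with the stabilizer of $\chi_D+\pp_u$, hence with the standard Levi of $P[D]$ containing $T$ (at least after conjugating $x_0$ into standard position; equivariance lets us assume $\eta_0=\chi_D$). Since $P[D]=P_u\rtimes L$ and $\psi_D\colon X\setminus D\to\chi_D+\pp_u\cong P[D]/L$ is a $P[D]$-equivariant surjection onto a homogeneous space, the fiber product description $P[D]*_L Z_0\xrightarrow{\sim} X\setminus D$, $[g*z]\mapsto gz$, follows from the general principle that an equivariant map to $P[D]/L$ exhibits the source as induced from the fiber over the base point — one checks it is well-defined, bijective, and an isomorphism of varieties (normality of $X\setminus D$, or local triviality of $P[D]\to P[D]/L$ which holds since $P_u$ is a section, makes $G*_HZ$ legitimate as noted in the conventions). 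The only subtlety is to confirm that every point of $X\setminus D$ actually maps into the orbit (guaranteed by (i)) and that the fibers are all $L$-translates of $Z_0$ (again by equivariance and transitivity on the base).

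For part (iii): assume $P[D]=P(X)$. I want to show the kernel $L_0$ of $L\curvearrowright Z_0$ contains $[L,L]$. The idea is that $P(X)$ is, by construction, the \emph{minimal} stabilizer of a $B$-divisor, i.e. the $B$-stable divisors separate the $G$-orbit structure as finely as possible; equivalently there is no proper parabolic between $B$ and $P(X)$ that already stabilizes all $B$-divisors. If some simple factor of $[L,L]$ acted nontrivially on $Z_0$, one could produce a new $B$-semi-invariant rational function whose divisor has a strictly larger stabilizer — contradicting minimality of $P(X)$. Concretely: $[L,L]$ is semisimple, so any nontrivial action on $Z_0$ would give a nonconstant $B_L$-semi-invariant in $\KK(Z_0)^{(B_L)}$, which via the isomorphism $X\setminus D\cong P(X)*_L Z_0$ pulls back to a $B$-semi-invariant $f_\psi\in\KK(X)^{(B)}$ whose divisor $(f_\psi)$ is $L$-stable but not $[L,L]$-stable, enlarging the parabolic that stabilizes some $B$-divisor beyond $P(X)$ — impossible. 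I expect \textbf{this third part to be the real obstacle}, because making "$P(X)$ is absolutely minimal" yield the precise statement "$[L,L]\subseteq L_0$" requires a clean argument that a nontrivial semisimple action on $Z_0$ forces extra $B$-semi-invariant divisors on $X$; one has to be careful that the $B_L$-semi-invariant produced is genuinely nonconstant and that its pullback divisor is $B$-stable with the right (strictly larger) stabilizer, rather than, say, being supported on $D$ or being $P(X)$-stable after all.
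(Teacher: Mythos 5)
This theorem is stated in the paper only as a citation to Knop~\cite{asymp}; the paper gives no proof of it, so there is no internal argument to compare against.

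Your sketches for (i) and (ii) go along the right lines. For (i), the straightforward computation gives $l_x\in\chi_D+\uu$ for all $x$ (vanishing on $\uu$ because $\sigma_i$ is a highest-weight vector; $\tm$-component equal to $\chi_D$). To push this to $l_x\in\chi_D+\pp_u$ you should observe that each $\sigma_i$ is in fact $P[D_i]$-semi-invariant (since $D_i$ is $P[D_i]$-stable and $\mathcal O(D_i)$ is linearized), hence $[L[D_i],L[D_i]]$-invariant, so $e_{-\alpha}\sigma_i=0$ for all $\alpha\in\Delta_L^+\subset\Delta_{L[D_i]}^+$; this is exactly what kills the $\ll\cap\uu$-part of $l_x$. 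The equality of the image with the orbit then follows from $P[D]$-equivariance and $P[D]$-regularity as you say. Part (ii) is the standard induced-space argument and is correct (the map $P[D]*_L Z_0\to X\setminus D$ is a bijective morphism onto a normal variety, hence an isomorphism).

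There is a genuine error in (iii): the direction of the stabilizer comparison is reversed. Minimality of $P(X)$ means $P(X)\subset P[D']$ for every $B$-divisor $D'$; equivalently, no $P[D']$ can be \emph{strictly smaller}. If $[L,L]$ acted nontrivially on $Z_0$, the $B_L$-semi-invariant $f$ you produce has $B_L$-weight nontrivial on $[L,L]\cap T$; pulled back via $\KK(Z_0)\cong\KK(X\setminus D)^{P_u}=\KK(X)^{P_u}$ it gives a $B$-semi-invariant rational function on $X$ whose divisor $(f)$ is $B$-stable but, by Rosenlicht's theorem, not $L$-stable. Hence $P[(f)]\not\supset L$, so $P[(f)]\not\supset P(X)$, and $P[D+(f)]=P[D]\cap P[(f)]$ is strictly \emph{smaller} than $P(X)$ --- that is the contradiction with minimality. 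You twice assert that the new divisor has a ``strictly larger'' stabilizer and speak of ``enlarging the parabolic beyond $P(X)$''; but a larger stabilizer is perfectly compatible with minimality (a $G$-stable divisor has stabilizer $G$). As written, the argument for (iii) does not close.
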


For simplicity we denote $P(X)$  by $P$.
Let us notice that in the theorem $x_0$ can be chosen so that $L\supset T$.

 In the situation of the local structure theorem (iii), we see that the torus
$A:=L/L_0=P/L_0P_u$ is acting effectively on $Z_0$ (The group $L_0P_u$ is denoted by $P_0$). And from
$\Bbb K(X)^{(B)}=\Bbb K(Z_0)^{(B_L)}=\Bbb K(Z_0)^{(L)}$ one can 
identify $\Xi(A)$ with the group
of characters
$$\Xi(X)=\{\chi \  | \  \Bbb K(X)_\chi^{(B)}\neq 0\}.$$
We shall write $A_X$ (resp. $\aa_{X}$) if we want to  stress a dependence on the variety $X$.
Let us recall that as a corollary of the local structure theorem we get that 
 general $P_0$-orbits coincide with general $P_u$-orbits.

\section{Equivariant maps to the flag varieties}

\vspace {2ex}  To formulate a refined version  of the local
structure theorem we introduce some additional notation.

We denote by $Q$ some parabolic subgroup of $G$ containing $P$.
 Let $M$ be the Levi subgroup of $Q$ that contains the maximal torus $T$. We have a Levi
decomposition $Q=Q_u\rtimes M$. Let us assume  that $T\cap [M,M]\subset
L_0$. Later in the proof of \ref{lst_mine_version} we shall choose $Q$ to be the stabilizer
of the divisors of $B$-semi-invariant functions; then it  satisfies this
property.

It is easy to see that $Q_u\subset P_u$ and $L\subset M$. Consider
the group $M_0=[M,M]Z(L_0)$.
We also put  $Q_0=Q_u\rtimes M_0$, so we have $A\cong M/M_0\cong
Q/Q_0$.

 Let us  embed
$\mathfrak a$, the Lie algebra of $A$, into $\mathfrak l$ as the
orthocomplement to $\mathfrak l_0$. Then the group $Z_G(\mathfrak
a)$ contains $M$.  We can describe the relations among the introduced
Lie algebras by the following picture taken from \cite{tim}.

\begin{center}
\unitlength 0.55ex \linethickness{0.4pt}
\begin{picture}(60.00,43.00)
\put(30.00,18.00){\makebox(0,0)[cc]{$\strut\smash{\ll_0}$}}
\put(30.00,26.00){\makebox(0,0)[cc]{$\aa$}}
\put(43.00,18.00){\makebox(0,0)[cc]{$\strut\smash{\mm\cap\pp_u}$}}
\put(18.00,18.00){\makebox(0,0)[cc]{$\strut\smash{\mm\cap\pp^{-}_u}$}}
\put(55.00,24.00){\makebox(0,0)[cc]{$\strut\smash{\qq_u}$}}
\put(5.00,24.00){\makebox(0,0)[cc]{$\strut\smash{\qq_u^{-}}$}}
\put(10.00,13.00){\line(0,1){10.00}}
\put(10.00,23.00){\line(1,0){40.00}}
\put(50.00,23.00){\line(0,-1){10.00}}
\put(60.00,13.00){\line(-1,0){60.00}}
\put(0.00,13.00){\line(0,1){16.00}}
\put(0.00,29.00){\line(1,0){60.00}}
\put(60.00,29.00){\line(0,-1){16.00}}
\put(25.00,13.00){\line(0,1){16.00}}
\put(35.00,13.00){\line(0,1){16.00}}
\put(25.00,30.00){\makebox(0,0)[lb]{$\overbrace{\rule{5.5ex}{0pt}}^{\textstyle\ll}$}}
\put(25.00,37.00){\makebox(0,0)[lb]{$\overbrace{\rule{19.25ex}{0pt}}^{\textstyle\pp}$}}
\put(10.00,12.00){\makebox(0,0)[lt]{$\underbrace{\rule{22ex}{0pt}}_{\textstyle\mm}$}}
\put(10.00,6.00){\makebox(0,0)[lt]{$\underbrace{\rule{27.5ex}{0pt}}_{\textstyle\qq}$}}
\end{picture}
\end{center}

Identifying $\gg \cong \gg^*$ via the invariant bilinear form fixed in the conventions we see that
$\ll,\ll_0,\mm,\mm_0,\aa$ are self-dual. Also we have
$\pp_u=\pp^{\bot}\cong (\gg/\pp)^*\cong (\pp_u^-)^*$, and
$\qq_u=\qq^{\bot}\cong (\gg/\qq)^*\cong (\qq_u^-)^*$.
Let us denote
$$\aa^{pr}:=\{\xi\in \aa|\ Z_G(\xi)=Z_G(\aa), \ g\xi\notin \aa  \text{ for all} \ g \in G\setminus N_G(\aa)  \},$$
in fact $\aa^{pr}$ is obtained from $\aa$ by throwing away a finite union of hyperplanes.

We shall construct the morphism that is the main tool in the proof of the refined
local structure theorem in the sense of Timashev.

Let us fix  an effective $G$-linearized $B$-divisor $D$ and the corresponding section $\sigma_{D}$ with  weight  $\chi_D$. Consider the action of $M$
on the space of sections $H^0(X,\mathcal O(D))$. Let $V_{\chi}(M):=\langle M
\sigma_D \rangle $  be the  $M$-module generated by
$\sigma_D$. It is simple
since $\sigma_D$ is $B$-semi-invariant. $V_{\chi}(M)$ can be considered  as a simple $Q$-module fixed pointwise by  $Q_u$.
Indeed $Q_u$ is a normal subgroup in $Q$ that stabilizes  the vector $\sigma_D$. Moreover $Z(M)$ acts
by a character  on  the simple module $V_{\chi}(M)$.


Let $|V_{\chi}(M)|$ be the linear system on $X$ (possibly not
complete) corresponding to the $Q$-module $V_{\chi}(M) \subset H^0(X,\mathcal O(D))$.

\begin{remark} The basepoint set  of the linear system $|V_{\chi}(M)|$ is $M$-invariant and is equal to
$\bigcap_{m \in M} mD$ since  $V_{\chi}(M)$ is the linear span of  the $M$-orbit of $\sigma_D$.
\end{remark}

Consider the  morphism:
$$
\pi_D:X\setminus \bigcap \limits_{m \in M} mD \longrightarrow \PP(V_{\chi}(M)^*),
$$
defined by the linear system $|V_{\chi}(M)|$. 
It is easy to see that $\pi_D$ is  $Q$-equivariant. This imply the following lemma:

\begin{lemma}\label{fibers_for_QU}  Any   orbit of the radical $Z(M)  \ltimes Q_u $ of $Q$ is contained in a fiber  of $\pi_D$.
\end{lemma}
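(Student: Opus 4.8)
The plan is to reduce the claim to the elementary fact that a linear system on which a group acts via a single character is constant along that group's orbits when pulled back to projective space. First I would recall that $\pi_D$ is the morphism $X\setminus\bigcap_{m\in M} mD \to \PP(V_\chi(M)^*)$ associated to the linear system $|V_\chi(M)|$, so that for $x$ outside the basepoint set, $\pi_D(x)$ is the point of $\PP(V_\chi(M)^*)$ given by the functional $s\mapsto s(x)$ on $V_\chi(M)$ (taken up to scalar, after trivializing $\MO(D)$ near $x$). Concretely, pick a basis $s_0,\dots,s_n$ of $V_\chi(M)$; then $\pi_D(x)=[s_0(x):\cdots:s_n(x)]$ in suitable local coordinates, and the assertion ``$y$ lies in the same fiber as $x$'' means the vectors $(s_i(x))$ and $(s_i(y))$ are proportional.

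Next I would use the structure of the $Q$-module $V_\chi(M)$ established just above the lemma: $V_\chi(M)$ is a simple $Q$-module on which $Q_u$ acts trivially (it fixes $\sigma_D$ pointwise, hence fixes the whole span $\langle M\sigma_D\rangle$ since $Q_u$ is normal in $Q$), and $Z(M)$ acts by a single character, say $\theta$. Let $R = Z(M)\ltimes Q_u$ be the radical of $Q$. Then $R$ acts on $V_\chi(M)$ by the character $\theta$ (extended by $1$ on $Q_u$). Since $\pi_D$ is $Q$-equivariant (where $Q$ acts on $\PP(V_\chi(M)^*)$ through the dual representation), for $r\in R$ and $x$ in the domain we have $\pi_D(rx) = r\cdot\pi_D(x)$. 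But $R$ acts on $V_\chi(M)^*$ by the character $\theta^{-1}$, hence acts trivially on the projectivization $\PP(V_\chi(M)^*)$. Therefore $\pi_D(rx)=\pi_D(x)$ for all $r\in R$, which is exactly the statement that every $R$-orbit lies in a single fiber of $\pi_D$.

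The only genuine point requiring a little care — and the step I would flag as the main obstacle — is the bookkeeping needed to make the preceding argument honest at the level of sections of a line bundle rather than honest functions: one must trivialize $\MO(D)$ locally, check that the transition data does not interfere with the $R$-equivariance of the map to $\PP(V_\chi(M)^*)$, and confirm that the $R$-action on the target projective space is indeed trivial once one passes from the linear action on $V_\chi(M)^*$ (by a character) to its projectivization. All of this is standard once one recalls that the morphism defined by a linear system is intrinsically $\Aut$-equivariant for automorphisms preserving the system, and that scalars act trivially on $\PP$; I would simply cite the $Q$-equivariance of $\pi_D$ (already asserted) together with the fact, noted above, that $R$ acts on $V_\chi(M)$ through a character. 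Hence the lemma follows.
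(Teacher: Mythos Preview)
Your proof is correct and follows exactly the same approach as the paper: the paper simply observes that $\pi_D$ is $Q$-equivariant and that (as established just before the lemma) $Q_u$ fixes $V_\chi(M)$ pointwise while $Z(M)$ acts by a character, so the radical acts trivially on $\PP(V_\chi(M)^*)$. Your extra care about trivializing $\MO(D)$ is harmless but unnecessary, since the $Q$-equivariance of $\pi_D$ is already asserted and is all that is used.
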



\begin{remark} If the divisor $D$ is $M$-invariant, then $\PP(V_{\chi}(M)^*)$ is a point.
For our purposes it is  sufficient to consider
a divisor that is not $M$-stable. This condition implies
that $\codim \bigcap_{m \in M} mD\geqslant 2$.
\end{remark}

Now we are ready to state one of the main theorems of the paper.

\begin{theorem}\label{main thm} Let $D$ be an effective $G$-linearized $B$-divisor,  with the canonical
  section $\sigma_{D}\in H^0(X,\mathcal O(D))^{(B)}_{\chi}$ and $P[D]$ be the stabilizer of $D$.
  Consider  a parabolic subgroup $Q$ of $G$ containing $P$ with  a Levi subgroup $M$, that satisfies
  the inclusion $T\cap [M,M]\subset
L_0$.
Then the image of the morphism:
$$
\pi_D: \X=X\setminus   ( \bigcap \limits_{m \in M} mD){\longrightarrow}
\PP (V_{\chi}(M)^*)
$$
 coincides with  the flag variety
$M\langle \sigma^*_{-\chi} \rangle\cong M/P_M^-$, where $P_M^-=M \cap P[D]^-$.    
\end{theorem}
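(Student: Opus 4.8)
The plan is to identify the image of $\pi_D$ as the closed $M$-orbit in $\PP(V_\chi(M)^*)$, using the fact that the section $\sigma_D$ is a $B$-semi-invariant highest weight vector. First I would recall that $V_\chi(M) = \langle M\sigma_D\rangle$ is a simple $M$-module with highest weight $\chi$ (relative to $B_M = B\cap M$), and that $Q_u$ and $Z(M)$ act trivially (resp. by a scalar) on it, so the projectivization $\PP(V_\chi(M)^*)$ carries an $M$-action, hence a $Q$-action through $Q \to M$. The unique closed $M$-orbit in $\PP(V_\chi(M)^*)$ is $M\langle\sigma^*_{-\chi}\rangle$, where $\sigma^*_{-\chi}$ is the lowest weight vector of the dual $V_\chi(M)^*$; its stabilizer in $M$ is the parabolic $P_M^-$ associated to the weight $\chi$, which one checks equals $M\cap P[D]^-$ since $P[D]$ is precisely the stabilizer of the line $\KK\sigma_D$ and $P[D]\supset B$.

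The core of the argument is to show that the image of $\pi_D$ is exactly this orbit, and not something larger. For one inclusion: by construction $\pi_D$ sends a point $x$ to the hyperplane of sections in $V_\chi(M)$ vanishing at $x$, i.e. to a point of $\PP(V_\chi(M)^*)$; I would evaluate $\pi_D$ at a suitably generic point and show the image point has stabilizer containing a conjugate of $P_M^-$ — concretely, $\pi_D$ is $Q$-equivariant, and the fiber over a point in the image is $Q$-stable (in fact $Z(M)\ltimes Q_u$ acts within fibers by Lemma \ref{fibers_for_QU}), so the image is a single $M$-orbit once one knows it is a single $Q$-orbit; but any $M$-orbit in $\PP(V_\chi(M)^*)$ of dimension $<\dim M/P_M^-$ cannot be closed, while the image, being the image of a morphism from a variety on which $M$ acts with the orbit through a generic point being open, will be forced to be the closed orbit. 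More directly: the line $\KK\sigma_D \subset V_\chi(M)$ is the highest weight line, $B_M$-stable; its annihilator in $V_\chi(M)^*$ is a $B_M$-stable hyperplane, i.e. defines a $B_M$-fixed point of $\PP(V_\chi(M)^*)$ lying in the closed orbit $M\langle\sigma^*_{-\chi}\rangle$. I would show this $B_M$-fixed point lies in the image of $\pi_D$ by exhibiting a point $x_0 \in \X$ at which exactly the sections in the hyperplane $\sigma_D^\perp$ vanish — this uses that $\sigma_D$ itself does not vanish at points of $X\setminus D$, so $\pi_D(x_0)$ for $x_0$ in the (nonempty, $B$-stable) locus where no other highest-weight translate vanishes is precisely $[\sigma_D^\perp]$. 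Since the image is $Q$-stable, hence $M$-stable, and contains this point of the closed orbit, it contains the whole closed orbit; since the closed orbit is itself closed and the image is irreducible of the same dimension (the generic fiber of $\pi_D$ has dimension $\dim X - \dim M/P_M^-$ by the local structure theorem applied with $Q = P$ and by counting), the image equals $M\langle\sigma^*_{-\chi}\rangle$.

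I expect the main obstacle to be the dimension/surjectivity bookkeeping: one must rule out that the image is a proper closed $M$-subvariety of $M\langle\sigma^*_{-\chi}\rangle$, equivalently that $\pi_D$ does not factor through a smaller flag variety. The cleanest way is to argue that the $M$-module $V_\chi(M)$ is generated by the translates $m\cdot\sigma_D$, so the base-point-free part of the linear system separates the $B_M$-fixed point from the rest of the closed orbit, forcing the image to contain an open dense subset of $M\langle\sigma^*_{-\chi}\rangle$; then closedness of the orbit and properness considerations (or simply that an $M$-stable constructible set containing a dense subset of an orbit contains the orbit) finish it. The identification $P_M^- = M\cap P[D]^-$ is then a formal consequence of $\chi = \chi_D$ being $P[D]$-regular restricted to $M$ — so that the parabolic in $M$ defined by $\chi$ is $M\cap P[D]^-$ — and of the opposite-parabolic convention used for lowest weight vectors.
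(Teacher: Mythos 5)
Your overall strategy --- identify the unique closed $M$-orbit in $\PP(V_\chi(M)^*)$ and show the image of $\pi_D$ equals it --- agrees with the paper's, and your observations about $M$-stability of the image and the identification $P_M^-=M\cap P[D]^-$ are correct. But the crux of the argument is asserted rather than proved: you must show that some point of $X\setminus D$ is actually sent by $\pi_D$ to the lowest-weight line $\langle\sigma^*_{-\chi}\rangle$. You say you would ``exhibit a point $x_0\in\X$ at which exactly the sections in the hyperplane $\sigma_D^\perp$ vanish,'' but give no reason such a point exists, and the locus you describe (``where no other highest-weight translate vanishes'') does not even express the correct condition --- what one needs is that every section in the complement of $\KK\sigma_\chi$ vanish at $x_0$ while $\sigma_\chi(x_0)\neq 0$, and nonemptiness of this locus is precisely the nontrivial input. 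Your dimension count has the same circularity: to know that the generic fiber of $\pi_D$ has dimension $\dim X-\dim M/P_M^-$ you need to know that $Z_0$ collapses under $\pi_D$, which is what is being proved. Finally, the claim that ``the image \dots will be forced to be the closed orbit'' because the generic orbit is open is not valid reasoning: the image of a non-proper $M$-equivariant morphism is an $M$-stable constructible set, which can meet several $M$-orbits and need not be a single closed orbit.

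The paper closes this gap with a two-sided limit computation, which is exactly where the hypothesis $[M,M]\cap T\subset L_0$ is used. Choose a one-parameter subgroup $\lambda$ of $[M,M]\cap T$ with $\lambda\in C_M^\circ$; by the hypothesis $\lambda$ acts trivially on $Z_0$. Writing a point of $\pi_D(X^\circ)$ as $x=\pi_D(p_uz)$ with $p_u\in P_u$, $z\in Z_0$, and decomposing $p_u=q_um$ with $q_u\in Q_u$, $m\in M\cap P_u$, one has $x=m\pi_D(z)$ since $Q_u$-orbits lie in fibers of $\pi_D$ (Lemma \ref{fibers_for_QU}). Then $\lim_{t\to 0}\lambda(t)x$ equals the lowest-weight point $\langle\sigma^*_{-\chi}\rangle$ by the weight computation of Lemma \ref{limit lemma}, and on the other hand equals $(\lim_{t\to 0}\lambda(t)m\lambda(t)^{-1})\pi_D(z)=\pi_D(z)$ by Lemma \ref{limpar} together with triviality of $\lambda$ on $Z_0$. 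Comparing these yields $\pi_D(Z_0)=\langle\sigma^*_{-\chi}\rangle$, after which your remaining formal steps (image is $M$-stable and dense in the closed orbit, hence equal to it) go through. Without this identification of $\pi_D(Z_0)$, the proof does not close.
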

\begin{proof}
First let us recall that $M\langle \sigma^*_{-\chi} \rangle$ is the unique closed orbit in $\PP (V_{\chi}(M)^*)$.
We begin  with the following well known lemmas.

\begin{lemma}\label{limit lemma} Let $\lambda \in C^\circ_M$. 
Denote by $\Ann \sigma_{\chi}$  a hyperplane in $V_{\chi}(M)^*$
annihilating $\sigma_\chi$.
 Then for any point  $x\in \PP (V_{\chi}(M)^*)\setminus \PP(\Ann \sigma_{\chi}) $
we have $\lim \limits_{t\rightarrow 0}\lambda(t)x=\langle \sigma^*_{-\chi}
\rangle,$
besides $Mx\nsubseteq \PP (\Ann \sigma_{\chi})$ for any $x\in  \PP (V_{\chi}(M)^*).$
\end{lemma}
\begin{proof} Let $v\in V_{\chi}(M)^*$ be a vector representing $x$.  Then $\langle Mv \rangle=V_{\chi}(M)^*$
by the simplicity of $V_\chi(M)^*$, in particular $Mx\nsubseteq \PP (\Ann \sigma_{\chi})$.
For $v\in V_{\chi}(M)^*\setminus \Ann \sigma_{\chi}$ consider the decomposition in  weight vectors:
$$
v=c\sigma^*_{-\chi}+\sum \limits_{\stackrel{\omega \in {\rm Wt}(V_{\chi}(M)^*)}{\omega\neq -\chi}} c_{\omega}\sigma^{*}_{\omega},
$$
where $\sigma^{*}_{\omega}\in V_{\chi}(M)^*$ is a vector of weight $\omega$ and $c\neq 0$.
Since  $-\langle \lambda;\chi \rangle<\langle
\lambda;\omega \rangle$, for $\lambda  \in C_M^0$ and every $\omega \in {\rm Wt}(V_{\chi}(M)^*)$
which is distinct from $-\chi$, we obtain:
$$
\lim \limits_{t\rightarrow 0}\lambda(t)x= \lim
\limits_{t\rightarrow 0}\left\langle \sigma^*_{-\chi}+\sum  \limits_{\stackrel{\omega \in
{\rm Wt}(V_{\chi}(M)^*)}{\omega\neq -\chi}} t^{\langle \lambda,\chi+\omega
\rangle} \frac{c_{\omega}}{c}\sigma^{*}_{\omega}\right\rangle =\langle
\sigma^*_{-\chi} \rangle.
$$
\end{proof}

Let us recall a following lemma.

\begin{lemma}\label{limpar}\cite[Prop. 8.4.5]{SPR} Let $G$ be a reductive group,  $T$ be a maximal torus and  $P$
be some   parabolic subgroup  containing  $T$.
Consider a one-parameter subgroup $\lambda\in \Lambda(T)$ such that $\langle \lambda,\alpha \rangle>0$ for all $\alpha \in \Delta_{P_u}$.
 Then  we have $\lim \limits_{t\rightarrow 0}\lambda(t)p_u\lambda(t)^{-1}=e$ for $p_u\in P_u$.
\end{lemma}

We note that by definition of $\pi_D$  the image $\pi_D(X\setminus D)$ is equal to the complement
of $\Ann \sigma_{\chi}$ in $\pi_D(\X)$. We also have $X\setminus D=\pi_D^{-1}(\Bbb P(V_{\chi}(M)^*)\setminus \PP(\Ann \sigma_{\chi}))$.  According to the Local
Structure Theorem \ref{lst} we may choose a dense  open $B$-invariant subset $X^\circ$
of $X\setminus D$ isomorphic to $P*_L Z_0$, where $L_0$ is acting trivially on $Z_0$.
 Let us choose a one-parameter subgroup $\lambda$ of the torus $[M,M]\cap T$
 such that $\lambda \in C^\circ_M$. Since  $[M,M]\cap T\subset L_0$, the one-parameter subgroup $\lambda$
 acts trivially on $Z_0$. Let us calculate the limit $\lim \limits_{t\rightarrow 0}\lambda(t)x$ for $x\in\pi_D(X^\circ)$ in two different  ways.
Since $\pi_D(X^\circ)\subset \PP (V_{\chi}(M)^*)\setminus \PP(\Ann \sigma_{\chi})$ Lemma \ref{limit lemma} implies the following:

\begin{stat}\label{first_limit} For $x\in \pi_D(X^\circ)$ we have $\lim \limits_{t\rightarrow 0}\lambda(t)x=\langle \sigma^*_{-\chi}
\rangle.$
\end{stat}

Let us calculate this limit in a different way.
By the Local Structure Theorem the action of $P_u$ is  free
on $X^\circ$ and we have $X^\circ=P_uZ_0$.

\begin{stat}\label{second_limit} Let $\lambda$ be a one-parameter subgroup of the torus $[M,M]\cap T$
 such that $\lambda \in C^\circ_M$. Then for $x=\pi_D(p_uz)\in \pi_D(X^\circ)$, where $p_u\in P_u$, $z\in Z_0$, we have
$\lim \limits_{t\rightarrow 0}\lambda(t)x=\pi_D(z).$
\end{stat}
\begin{proof}
Consider the decomposition $Q=Q_u\rtimes M$; combining it with the inclusions $Q_u\subset P_u \subset Q$ we get  $P_u=
Q_u\rtimes(M\cap P_u)$. Thus we have $p_u=q_um$, for $q_u\in Q_u$, $m\in M\cap P_u$.
Thus we obtain $$x=\pi_D(p_uz)=\pi_D(q_umz)=m\pi_D(z).$$

 Since $\lambda \in C^\circ_M$  is positive on all roots corresponding to $M\cap P_u$,
 by Lemma  \ref{limpar}  we get $\lim \limits_{t\rightarrow 0}\lambda(t)m\lambda(t)^{-1}=e$ for $m\in M\cap P_u$.
The triviality of the action of $\lambda$  on $Z_0$ implies that:
$$
\lim \limits_{t\rightarrow 0}\lambda(t)x=(\lim \limits_{t\rightarrow 0}\lambda(t)m\lambda(t)^{-1})\pi_D(z)=\pi_D(z)\in \pi_D(Z_0).
$$
\end{proof}

Combining  Statements \ref{first_limit} and \ref{second_limit} we get $\pi_D(Z_0)=\langle \sigma^*_{-\chi}
\rangle.$ Using the fact that $\pi_D(X^\circ)=(M\cap P_u)\pi_D(Z_0)$, we obtain that
$$M\pi_D(X^\circ)=M\pi_D(Z_0)=M\langle \sigma^*_{-\chi}
\rangle.$$
Since $M\pi_D(X^\circ)$ is dense in $\pi_D(\X)$, this proves our theorem.
\end{proof}

\begin{corollary}{\label{CorLST}}(of the proof of  Theorem 1.9) Let $X^\circ$ be the dense  open $B$-invariant subset
of $X\setminus D$ isomorphic to $P*_L Z_0$, where $L_0$ is acting trivially on $Z_0$.
 The section $Z_0$ is contained in a fiber of the map $\pi_D$.
\end{corollary}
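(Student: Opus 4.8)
The plan is to extract the statement directly from the limit computations carried out in the proof of Theorem~\ref{main thm}. Recall that in that proof we fixed a one-parameter subgroup $\lambda$ of the torus $[M,M]\cap T$ with $\lambda\in C^\circ_M$, and we used that $[M,M]\cap T\subset L_0$ acts trivially on $Z_0$. The key identity established there, namely Statement~\ref{second_limit}, says that for $x=\pi_D(p_uz)$ with $p_u\in P_u$ and $z\in Z_0$ one has $\lim_{t\to 0}\lambda(t)x=\pi_D(z)$; and Statement~\ref{first_limit}, obtained from Lemma~\ref{limit lemma}, says that this same limit equals $\langle\sigma^*_{-\chi}\rangle$ for every $x\in\pi_D(X^\circ)$. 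Taking $x=\pi_D(z)$ itself (the case $p_u=e$), we get that $\pi_D(z)=\langle\sigma^*_{-\chi}\rangle$ for every $z\in Z_0\cap X^\circ$.

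First I would observe that $Z_0$ is irreducible (it is a fiber $\psi_D^{-1}(\eta_0)$ of the map $\psi_D$, cf. Theorem~\ref{lst}(ii), and under the isomorphism $P[D]*_LZ_0\cong X\setminus D$ it is a locally closed subvariety), and that $Z_0\cap X^\circ$ is a dense open subset of $Z_0$ since $X^\circ$ is dense and open in $X\setminus D$ and meets $Z_0$. Then I would note that $\pi_D$ is a morphism, hence continuous, and $\{\langle\sigma^*_{-\chi}\rangle\}$ is a single point, in particular closed in $\PP(V_\chi(M)^*)$. Therefore the preimage $\pi_D^{-1}(\langle\sigma^*_{-\chi}\rangle)$ is closed; since it contains the dense subset $Z_0\cap X^\circ$ of $Z_0$, it contains the closure of $Z_0\cap X^\circ$ in $X\setminus D$, which is all of $Z_0$. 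Hence $\pi_D(Z_0)=\{\langle\sigma^*_{-\chi}\rangle\}$, i.e. $Z_0$ is contained in a single fiber of $\pi_D$, as claimed.

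The only genuinely substantive point, and the one I would be most careful about, is the transition from $Z_0\cap X^\circ$ to all of $Z_0$: one must be sure that $Z_0$ is irreducible so that a dense open subset has $Z_0$ as its closure, and that $Z_0$ actually lies in the domain $\X=X\setminus\bigcap_{m\in M}mD$ of $\pi_D$ rather than hitting the base locus. For the latter, since $Z_0\subset X\setminus D\subset\X$ (as $\bigcap_{m\in M}mD\subset D$), the domain issue is immediate. For irreducibility of $Z_0$ one invokes that it is a fiber of $\psi_D$ over a point of the single orbit $\chi_D+\pp_u$ and the identification $X\setminus D\cong P[D]*_LZ_0$, under which $Z_0$ inherits the structure of a variety; irreducibility of $Z_0$ follows because $X\setminus D$ is irreducible (being open in the irreducible $X$) and $P[D]*_LZ_0\to X\setminus D$ is an isomorphism with $P[D]$ connected, forcing $Z_0$ to be irreducible. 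With these two remarks in place the argument above is complete.
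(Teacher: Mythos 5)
Your first paragraph is exactly the paper's argument: for $z\in Z_0$, take $x=\pi_D(z)$ (the case $p_u=e$) in Statement~\ref{second_limit}, which gives $\lim_{t\to 0}\lambda(t)x=\pi_D(z)$, and compare with Statement~\ref{first_limit}, which gives $\lim_{t\to 0}\lambda(t)x=\langle\sigma^*_{-\chi}\rangle$; the two together yield $\pi_D(Z_0)=\langle\sigma^*_{-\chi}\rangle$, i.e. $Z_0$ is in a single fiber. This is precisely the line in the proof of Theorem~\ref{main thm} that the corollary extracts.

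However, your second and third paragraphs are based on a misreading of what $Z_0$ is in the corollary. You identify $Z_0$ with the cross section $\psi_D^{-1}(\eta_0)$ of Theorem~\ref{lst}(ii), for which $P[D]*_L Z_0\cong X\setminus D$; but the corollary's $Z_0$ is the (possibly smaller) slice appearing in the proof of Theorem~\ref{main thm}, defined by the isomorphism $X^\circ\cong P*_L Z_0$ with $L_0$ acting trivially on $Z_0$ and $P=P(X)$, not $P[D]$. In general $P[D]\supsetneq P$, so these two slices need not coincide. Since the corollary's $Z_0$ is by definition contained in $X^\circ$, the set $Z_0\cap X^\circ$ is simply $Z_0$, and the whole density/irreducibility/closure argument is superfluous: once paragraph one is in place, the result already holds for all of $Z_0$. (The claim that $\psi_D^{-1}(\eta_0)$ lies in a fiber of $\pi_D$ is a genuinely stronger statement, proved separately in the paper as Proposition~\ref{fiber}, and it requires a different argument.) Your conclusion is correct, but the last two paragraphs should be dropped and the hypothesis of the corollary read more carefully.
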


  Now we are able to describe the sets  $X\setminus D$ and $D$ as  preimages for the map  $\pi_D$ of some subsets in $M/P_M^-$.

\begin{proposition}\label{open_cell} The set $X\setminus  D$ is
the preimage under $\pi_D$ of the open Bruhat cell $B_MP^-_M/P^-_M \subset M/P^-_{M}$, where $B_M=B\cap M$.
The set $\pi_D(D\cap \X)$ is equal to the  preimage of the complement of this open cell in  $M/P_M^-$.
\end{proposition}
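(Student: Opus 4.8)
The plan is to reduce the statement to a standard fact about the flag variety $M/P_M^-$, using what has already been established about $\pi_D$. Recall from the proof of Theorem~\ref{main thm} that $\pi_D(\X)=M\langle\sigma^*_{-\chi}\rangle\cong M/P_M^-$ and that $X\setminus D=\pi_D^{-1}\big(\PP(V_{\chi}(M)^*)\setminus\PP(\Ann\sigma_{\chi})\big)$. Since $\pi_D$ maps $\X$ onto $M/P_M^-$, for every subset $S\subset\PP(V_{\chi}(M)^*)$ one has $\pi_D^{-1}(S)=\pi_D^{-1}(S\cap M/P_M^-)$ and $\pi_D\big(\pi_D^{-1}(S)\big)=S\cap M/P_M^-$. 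Applying this to $S=\PP(\Ann\sigma_{\chi})$ and passing to complements yields
\[
X\setminus D=\pi_D^{-1}\big((M/P_M^-)\setminus\PP(\Ann\sigma_{\chi})\big),\qquad
\pi_D(D\cap\X)=\PP(\Ann\sigma_{\chi})\cap(M/P_M^-).
\]
Thus the whole proposition follows once we show the identity of subsets of $\PP(V_{\chi}(M)^*)$
\[
\PP(\Ann\sigma_{\chi})\cap(M/P_M^-)=(M/P_M^-)\setminus\big(B_MP_M^-/P_M^-\big).
\]

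To prove this identity I would run through the Bruhat cells of $M/P_M^-$. Let $L_M=M\cap L[D]$ be the Levi subgroup of $P_M^-$ containing $T$; since $\langle\chi,\alpha^{\vee}\rangle=0$ exactly for $\alpha\in\Delta_{L[D]}$ (remark after the definition of $P[D]$-regular, applied to $\chi=\chi_D$), we have $\Delta_{L_M}=\{\alpha\in\Delta_M:\langle\chi,\alpha^{\vee}\rangle=0\}$ and hence $W_{L_M}=\{w\in W_M:w\chi=\chi\}$. Write $M/P_M^-=\bigsqcup_w C_w$ with $C_w=B_M\dot w P_M^-/P_M^-$, $w$ ranging over representatives of $W_M/W_{L_M}$; the identity coset gives the open cell $C_e=B_M\langle\sigma^*_{-\chi}\rangle=B_MP_M^-/P_M^-$ (open because $B_M=B\cap M$ and $B^-\cap M\subset P_M^-$ are opposite Borel subgroups of $M$). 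Now evaluate the highest weight vector $\sigma_{\chi}\in V_{\chi}(M)$, viewed as a linear form on $V_{\chi}(M)^*$, on a point $b\dot w\sigma^*_{-\chi}$ of $C_w$, where $b=tu\in TU_M=B_M$. The pairing $V_{\chi}(M)\times V_{\chi}(M)^*\to\KK$ is $T$-invariant, so $\langle\sigma_{\chi},\,\cdot\,\rangle$ vanishes on every weight subspace of $V_{\chi}(M)^*$ except the one of weight $-\chi$; and as $U_M$ is generated by the root subgroups $U_{\alpha}$, $\alpha\in\Delta^+_M$, all weights occurring in $u\dot w\sigma^*_{-\chi}$ lie in $w(-\chi)+\ZZ_{\geq 0}\Delta^+_M$. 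For a nontrivial coset representative $w$ one has $w(-\chi)\neq-\chi$, and since $-\chi$ is the lowest weight of $V_{\chi}(M)^*$ the difference $w(-\chi)-(-\chi)$ is a nonzero element of $\ZZ_{\geq 0}\Delta^+_M$; hence $-\chi\notin w(-\chi)+\ZZ_{\geq 0}\Delta^+_M$, so $\langle\sigma_{\chi},b\dot w\sigma^*_{-\chi}\rangle=0$ and $C_w\subset\PP(\Ann\sigma_{\chi})$. For $w=e$ the weight $-\chi$ component of $b\sigma^*_{-\chi}$ is $(-\chi)(t)\,\sigma^*_{-\chi}\neq0$, and $\langle\sigma_{\chi},\sigma^*_{-\chi}\rangle\neq0$ since the $T$-invariant pairing restricts to a perfect pairing of the weight $\chi$ line of $V_{\chi}(M)$ with the weight $-\chi$ line of $V_{\chi}(M)^*$; hence $C_e\cap\PP(\Ann\sigma_{\chi})=\emptyset$. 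Therefore $\PP(\Ann\sigma_{\chi})\cap(M/P_M^-)=\bigsqcup_{w\neq e}C_w=(M/P_M^-)\setminus C_e$, which is exactly the identity above.

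Combining the two displays gives $X\setminus D=\pi_D^{-1}(B_MP_M^-/P_M^-)$ and $\pi_D(D\cap\X)=(M/P_M^-)\setminus(B_MP_M^-/P_M^-)$, which is the proposition. I expect the only real content to sit in the middle paragraph — the standard fact that the non-vanishing locus of a highest weight coordinate on a flag variety is the big Bruhat cell — and even there the work is just the short weight computation; the first and third paragraphs are formal manipulations resting on the surjectivity of $\pi_D$ onto $M/P_M^-$ proved in Theorem~\ref{main thm}.
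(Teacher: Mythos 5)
Your proof is correct and follows the same logic as the paper's: translate membership in $D$ into vanishing of $\langle\pi_D(x),\sigma_\chi\rangle$, observe that the complement of the open Bruhat cell in $M/P_M^-$ is exactly the hyperplane section $\PP(\Ann\sigma_\chi)\cap(M/P_M^-)$, and conclude via surjectivity of $\pi_D$ onto $M/P_M^-$. The only difference is a matter of detail: the paper simply recalls the identity ``hyperplane section $=$ complement of the big cell'' as a known fact (its display $(*)$), whereas you prove it explicitly by running through the Bruhat cells $C_w$, $w\in W_M/W_{L_M}$, and doing the $\lambda$-weight bookkeeping showing $\langle\sigma_\chi, b\dot w\sigma^*_{-\chi}\rangle=0$ exactly for $w\neq e$; that supplementary computation is correct and is the standard argument behind the fact the paper cites.
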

\begin{proof}
Let us recall that the complement of the open cell in $M\langle \sigma^*_{-\chi} \rangle\subset \PP (V_{\chi}(M)^*)$
can be described as
$$\{\langle \sigma^* \rangle \in M/P^-_M\ | \ \langle
\sigma^* , \sigma_{\chi} \rangle=0 \} \eqno{(*)}$$

We see that equality   $\sigma_{\chi}(x)=0$ (i.e. $x \in D$) is
equivalent to $\langle \pi_D(x) , \sigma_{\chi} \rangle=0$. Thus we get
that $X\setminus D$ maps into the open cell $B_MP^-_M/P^-_M \subset M/P^-_{M}$ and $D\cap \X$
maps to the complement of the open cell. Since $\pi_D:\X \rightarrow M/P^-_M$ is surjective
this proves our proposition.
\end{proof}



\section{Relation between $\pi_D$ and cross sections.}

Now we state a result which relates the cross sections from the local structure theorem,
introduced by Knop, and the fibers of $\pi_D$.

\begin{proposition} \label{fiber} Consider an effective $G$-linearized $B$-divisor $D$ with   weight $\chi$
 and  the canonical section $\sigma_\chi$.
 We have a  map $\psi_D:X \setminus D \longrightarrow \mathfrak g^*: \ x\mapsto l_x,
 \ \  where \ \ l_x(\xi)=\frac{\xi\sigma_\chi}{\sigma_\chi}(x)$.
  For  a point $x_0 \in X\setminus D$ consider $Z
:=\psi_D^{-1}(\psi_D(x_0))$.
 Let $\pi_D$ be the map constructed
in Theorem \ref{main thm} for the divisor $D$. Then
$$Z \subset \pi_D^{-1}(\pi_D(x_0))$$
\end{proposition}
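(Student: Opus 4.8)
The plan is to show that the $\psi_D$-fiber $Z$ through $x_0$ lands in a single $\pi_D$-fiber by identifying $Z$ with a cross section of the type appearing in Theorem~\ref{lst} and then invoking Corollary~\ref{CorLST}. More precisely, $\psi_D$ is exactly the map $\psi_D$ of Theorem~\ref{lst} attached to the (Cartier, after shrinking) divisor $D$ with its canonical section $\sigma_\chi$; since $D$ is effective with stabilizer $P[D]$, the weight $\chi=\chi_D$ is automatically $P[D]$-regular (by the Remark following the definition of $P[D]$-regularity), so Theorem~\ref{lst} applies verbatim. Hence the image of $\psi_D$ is the single $P[D]$-orbit $\chi+\pp_u$, the stabilizer $L:=G_{\psi_D(x_0)}$ is a Levi subgroup of $P[D]$ containing $T$ for a suitable choice of $x_0$, and $Z=\psi_D^{-1}(\psi_D(x_0))$ is precisely the cross section $Z_0$ giving the isomorphism $P[D]*_L Z\xrightarrow{\sim} X\setminus D$.

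Next I would reduce to the setting of Theorem~\ref{main thm} and its Corollary~\ref{CorLST}. Apply Theorem~\ref{main thm} with $Q:=P[D]$ itself (so $M=L$, $M\cap P_u = \{e\}$, and the hypothesis $T\cap[M,M]\subset L_0$ is the content of Theorem~\ref{lst}(iii), or one simply enlarges $Q$ as needed); then $X^\circ$ in Corollary~\ref{CorLST} is a dense open $B$-invariant subset of $X\setminus D$ isomorphic to $P*_L Z$, and the corollary tells us that this particular cross section $Z$ lies inside one fiber of $\pi_D$. Since $x_0\in Z$, that fiber is $\pi_D^{-1}(\pi_D(x_0))$, and we are done — provided the cross section appearing in the corollary can be taken to be the one through our given $x_0$. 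Because all cross sections $\psi_D^{-1}(\eta)$ for $\eta\in\chi+\pp_u$ are $P[D]$-conjugate (the image is a single orbit), and $\pi_D$ is $Q$-equivariant, it suffices to check the claim for one of them, and then transport by the group action: if $Z=p\cdot Z_0$ for some $p\in P[D]$ with $Z_0$ as in Corollary~\ref{CorLST}, then $\pi_D(Z)=p\,\pi_D(Z_0)$ is again a single point.

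Alternatively, and perhaps more transparently, I would argue directly on functions. Restricting to $X\setminus D$ we may work with the rational function $f_\xi := \dfrac{\xi\sigma_\chi}{\sigma_\chi}$ for $\xi\in\gg$, and with the rational functions $\dfrac{\langle\pi_D(\,\cdot\,),\sigma^*_\omega\rangle}{\langle\pi_D(\,\cdot\,),\sigma_\chi\rangle}$ coming from the coordinates of $\pi_D$ relative to the weight basis of $V_\chi(M)^*$; note $\langle\pi_D(x),\sigma_\chi\rangle$ is a nonzero multiple of $\sigma_\chi(x)$ on $X\setminus D$. The key point is that, on $X^\circ=P_uZ_0$, the coordinates of $\pi_D$ are constant along $Z_0$ (Corollary~\ref{CorLST}) and transform under $P_u$ in a way governed entirely by the $P_u$-action on $V_\chi(M)^*$; differentiating the relation $\pi_D(p_u z) = \bar p_u\,\pi_D(z)$ (with $\bar p_u$ the image of $p_u$ in $M$, which is trivial if $Q=P[D]$) along $\xi\in\gg$ and comparing with the formula $l_x(\xi)=\frac{\xi\sigma_\chi}{\sigma_\chi}(x)$ shows that the level sets of $\psi_D$ are contained in the level sets of $\pi_D$.

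The main obstacle I anticipate is bookkeeping rather than conceptual: making sure the cross section produced abstractly by Theorem~\ref{lst} / Corollary~\ref{CorLST} is literally the fiber $\psi_D^{-1}(\psi_D(x_0))$ through the \emph{prescribed} point $x_0$ (not merely a $P[D]$-conjugate), and handling the choice of $Q$ so that the hypotheses of Theorem~\ref{main thm} hold while keeping $M\cap P_u$ under control. Both are resolved by the $Q$-equivariance of $\pi_D$ together with the transitivity of $P[D]$ on the image of $\psi_D$, so the real content is exactly Corollary~\ref{CorLST}, which has already been established; the proposition is essentially a repackaging of that corollary once $\psi_D$ is recognized as the $\psi_D$ of the Local Structure Theorem.
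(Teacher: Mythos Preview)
There is a real gap. Your $Z=\psi_D^{-1}(\psi_D(x_0))$ is the cross section for the \emph{local structure of $P[D]$}, i.e.\ $X\setminus D\cong P[D]*_{L[D]}Z$; but Corollary~\ref{CorLST} is about the smaller cross section $Z_0$ for $P=P(X)$, the one on which $L_0$ acts trivially. When $P[D]\supsetneq P(X)$ (nothing in the proposition excludes this), $Z$ strictly contains an open piece $(L[D]\cap P)*_L Z_0$, and knowing $\pi_D(Z_0)$ is a point is not yet the statement $\pi_D(Z)$ is a point. Relatedly, your appeal to Theorem~\ref{lst}(iii) to check $T\cap[M,M]\subset L_0$ with $Q=P[D]$ (so $M=L[D]$) is illegitimate: part~(iii) is stated \emph{only} under the hypothesis $P[D]=P(X)$, so it does not supply $T\cap[L[D],L[D]]\subset L_0$ in general, and hence Theorem~\ref{main thm} with $Q=P[D]$ need not apply. (Also, $\pi_D$ in the proposition is the map for the ambient $Q$ fixed in \S2; you are not free to redefine it with $Q=P[D]$.)

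The paper's Argument~1 is exactly the completion your sketch lacks: after reducing by a $P[D]_u$-translation (legal since $P[D]_u\subset P_u\subset Q$ and $\pi_D$ is $Q$-equivariant) to $L[D]\supset L$, one writes the open part of $Z$ as $(L[D]\cap P)*_L Z_0$, uses Corollary~\ref{CorLST} for $Z_0$, and then proves the remaining group $L[D]\cap P_u$ fixes the image point $e(M\cap P[D]^-)$ via the decomposition $L[D]\cap P_u=(L[D]\cap M\cap P_u)(L[D]\cap Q_u)$. Your ``alternative'' differentiation idea is closer in spirit to the paper's Argument~2, but to make it work you would need the factorization of $\psi_D|_{\mm}$ through $\pi_D$ (Lemma~\ref{image_phi}) together with the freeness of the $(P[D]_u\cap M)$-action on $\psi_D|_{\mm}(X\setminus D)$; as written, ``differentiating $\pi_D(p_uz)=\bar p_u\,\pi_D(z)$'' does not yield the needed separation of $\psi_D$-level sets inside $\pi_D$-level sets.
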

\begin{remark} This proposition is also valid for  $B$-divisor $D$ with a $P[D]$-regular weight $\chi_D$.
We omit a precise formulation and a proof for brevity.
\end{remark}
\begin{proof}
We shall give two proofs of this proposition based on different observations.

\vspace{1ex}

{\bf Argument 1}
Denote by $L[D]$ the Levi subgroup of $P[D]$ that is the stabilizer of $\psi_D(x_0)$.
By $Q$-equivariance of $\pi_D$ the translation of $x_0$ by an element from $P[D]_u$ preserves the condition $Z \subset \pi_D^{-1}(\pi_D(x_0))$;
thus we can assume that $L[D]\supset L$.
 By the local structure theorem we have
$X\setminus D=P[D]*_{L[D]}Z$. We can apply the local structure theorem to the action $L[D]:Z$ and to the
divisor $D_0\cap Z$ where $D_0\subset X$ is a $B$-divisor with  stabilizer $P:=P(X)$.
Thus we get that $Z$ contains an open subset isomorphic to $(L[D]\cap P)*_L Z_0$ where the action $L_0:Z_0$
is trivial. By Corollary \ref{CorLST} we know that $\pi_D(Z_0)=e(M\cap P[D]^-)$. Let us prove that the group $L[D]\cap P_u$ fixes this point.
Indeed from the decomposition $P_u=(P_u\cap M)Q_u$
using the root decompositions with respect to $T$ of the corresponding Lie algebras we get
 $L[D]\cap P_u=(L[D]\cap M\cap P_u)(L[D]\cap Q_u)$.
The claim follows from the facts that $Q_u$-orbits lie in the fibers of $\pi_D$ and that $L[D]\cap M$ fixes $e(M\cap P[D]^-)$.
This implies that $\pi_D((L[D]\cap P_u)Z_0)=e(M\cap P[D]^-)$ and proves the proposition.

\vspace{1ex}

{\bf Argument 2}
The set $\pi_D(X\setminus D)$ is equal to  the open $P[D]_u\cap M$-orbit in $M/(M\cap P[D]^-)$.
  If the statement of the proposition
 was not true,  there would exist a point $x\in Z$
such that $\pi_D(x)\neq \pi_D(x_0)$ and $\pi_D(x)= p\pi_D(x_0)$ for some $p\in P[D]_u\cap M$.

Consider the restriction map $$\psi_D|_{\mm}:X \setminus D
\longrightarrow \gg^*\longrightarrow\mm^*,$$
$$\psi_D|_{\mm}: x\mapsto l_x,
 \ \  where \ \ l_x(\xi)=\frac{\xi\sigma_\chi}{\sigma_\chi}(x), \ \forall \, \xi \in \mm.$$
The map $\psi_D$ is $P[D]$-equivariant, and this
implies  $P[D]\cap M$-equivariance of $\psi_D|_{\mm}$.
We have the following evident lemma.

\begin{lemma}\label{image_phi} The map $\psi_D|_{\mm}:X\setminus D\rightarrow \mm^*$ is equal to the composition of the map
$\pi_D:\X \rightarrow \PP(V_{\chi}(M)^*)$ and the map $\PP(V_{\chi}(M)^*)\setminus  \PP(\Ann \sigma_{\chi})\rightarrow \mm^*$ 
  defined as:
$$\langle \sigma^*\rangle \mapsto l_{\langle \sigma^*\rangle},
 \ \  where \ \ l_{\sigma^*}(\xi)=\frac{\langle \xi\sigma_\chi,\sigma^*\rangle }{\langle \sigma_\chi,\sigma^*\rangle}, \ \forall \, \xi \in \mm .$$
\end{lemma}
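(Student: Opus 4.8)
This lemma is purely a matter of unwinding the two definitions, so the plan is to make the identifications explicit and to check that the right-hand map is meaningful along the image of $\pi_D$. First I would recall the pointwise description of $\pi_D$. For $x\in\X$ lying outside the base locus $\bigcap_{m\in M}mD$, choose any trivialization $\mathcal O(D)_x\cong\KK$ of the fibre; evaluation of sections at $x$ then defines a linear functional $\mathrm{ev}_x\in V_{\chi}(M)^*$, nonzero and well defined up to a nonzero scalar, whose kernel is the hyperplane of sections of $V_{\chi}(M)$ vanishing at $x$, and by definition $\pi_D(x)=\langle\mathrm{ev}_x\rangle$. Moreover, for $x\in X\setminus D$ we have $\sigma_\chi(x)\neq 0$, i.e. $\langle\sigma_\chi,\mathrm{ev}_x\rangle\neq 0$, so $\pi_D(x)\notin\PP(\Ann\sigma_\chi)$ and the second map of the statement can indeed be applied at $\pi_D(x)$; this is the same observation already used to write $X\setminus D=\pi_D^{-1}(\PP(V_{\chi}(M)^*)\setminus\PP(\Ann\sigma_\chi))$.

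Next I would record the one substantive point. Since $V_{\chi}(M)$ is an $M$-submodule of $H^0(X,\mathcal O(D))$, it is stable under the action of $\mm$ induced by the fixed $G$-linearization of $\mathcal O(D)$; hence $\xi\sigma_\chi\in V_{\chi}(M)$ for every $\xi\in\mm$, so that the pairing $\langle\xi\sigma_\chi,\sigma^*\rangle$ in the statement makes sense, and the quantity $l_{\sigma^*}(\xi)=\langle\xi\sigma_\chi,\sigma^*\rangle/\langle\sigma_\chi,\sigma^*\rangle$ depends only on the line $\langle\sigma^*\rangle$. (This is precisely why the assertion concerns $\psi_D|_{\mm}$ rather than all of $\psi_D$: for $\xi\in\gg\setminus\mm$ the section $\xi\sigma_\chi$ need not lie in $V_{\chi}(M)$, and the right-hand map would not be defined.) Applying the second map to $\pi_D(x)=\langle\mathrm{ev}_x\rangle$ then yields the functional on $\mm$
$$
\xi\ \longmapsto\ \frac{\langle\xi\sigma_\chi,\mathrm{ev}_x\rangle}{\langle\sigma_\chi,\mathrm{ev}_x\rangle}\ =\ \frac{(\xi\sigma_\chi)(x)}{\sigma_\chi(x)},
$$
where in the last expression the two values are taken in the chosen trivialization of $\mathcal O(D)_x$; the scalar ambiguity of $\mathrm{ev}_x$ cancels in the ratio, and the right-hand side is independent of the trivialization because $\xi\sigma_\chi/\sigma_\chi$ is a genuine rational function on $X$, regular on $X\setminus D$.

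Finally, $(\xi\sigma_\chi/\sigma_\chi)(x)$ is by definition exactly $l_x(\xi)$ for $\xi\in\mm$, so the composite sends $x$ to $l_x|_{\mm}=\psi_D|_{\mm}(x)$; since $x$ ranges over all of $X\setminus D$, this identifies the two maps. I do not expect any real obstacle here beyond the bookkeeping above: the only genuine content is the $\mm$-stability of $V_{\chi}(M)$, which makes the formula in the statement well posed, after which the equality is the definition of $\pi_D$ read off one section at a time.
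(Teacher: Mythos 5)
Your argument is correct and amounts to spelling out carefully the identifications that the paper takes for granted — indeed, the paper introduces this statement with the words ``We have the following evident lemma'' and supplies no proof at all. You correctly identify the one substantive point, namely that $V_{\chi}(M)$ is $\mm$-stable so $\xi\sigma_\chi\in V_{\chi}(M)$ and the pairing $\langle\xi\sigma_\chi,\sigma^*\rangle$ is well posed (and why this forces the restriction to $\mm$ rather than all of $\gg$), and the computation $\langle\xi\sigma_\chi,\mathrm{ev}_x\rangle/\langle\sigma_\chi,\mathrm{ev}_x\rangle=(\xi\sigma_\chi/\sigma_\chi)(x)=l_x(\xi)$ closes the loop exactly as one would expect.
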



 Applying Theorem \ref{lst} we obtain that the image of $\psi_D|_{\mm}(X\setminus D)$ is a single $P[D]_u\cap M$-orbit and
 the stabilizer of   $\psi_D|_{\mm}(x_0)$ in $P[D]_u\cap M$  is trivial.
By Lemma \ref{image_phi} the equality $\pi_D(x)=p\pi_D(x_0)=\pi_D(px_0)$ for $p\in P_u\cap M$
implies that
 $\psi_D|_{\mm}(x)=\psi_D|_{\mm}(px_0).$ Taking into
account that
$$
\psi_D|_{\mm}(x_0)=\psi_D|_{\mm}(x)= \psi_D|_{\mm}(px_0)=p\psi_D|_{\mm}(x_0),
$$
we come to a contradiction since $p$ does not stabilize
$\psi_D|_{\mm}(x_0)$.
\end{proof}


\begin{proposition}\label{irr_fibers} The fibers of the map $\pi_D$ are irreducible.
The set $\pi_D^{-1}(\pi_D(x_0))$ is identified with $$(L[D]\cap Q)\ltimes (P[D]_u\cap  Q_u)*_{L[D]\cap Q}Z
\cong (P[D]_u\cap  Q_u)\times Z.$$\end{proposition}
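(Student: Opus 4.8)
The plan is to describe one fiber of $\pi_D$ completely and then move it around by the $M$-action. First I would reduce to the base point: by Proposition~\ref{open_cell} the point $\pi_D(x_0)$ lies in the open Bruhat cell of $M/P_M^-$, $P_M^-=M\cap P[D]^-$, which is the orbit of $o:=\langle\sigma^*_{-\chi}\rangle$ under the unipotent radical $P[D]_u\cap M$ of the parabolic $M\cap P[D]$ of $M$; so $\pi_D(x_0)=n\mal o$ with $n\in P[D]_u\cap M\subseteq P[D]_u\subseteq Q$. Replacing $x_0$ by $n^{-1}x_0$ translates $\pi_D^{-1}(\pi_D(x_0))$ by $n^{-1}\in Q$, replaces $Z$ by $n^{-1}Z$ and $L[D]$ by $n^{-1}L[D]n$, and fixes $P[D]_u\cap Q_u$ (it is normal in $P[D]_u$ and $n\in P[D]_u$), so the statement is unchanged and I may assume $\pi_D(x_0)=o$. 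I would also record the ambient structure: by the Local Structure Theorem~\ref{lst}, $X\setminus D\cong P[D]*_{L[D]}Z$, and since $P[D]=P[D]_u\rtimes L[D]$ is $P[D]_u\times L[D]$ as a variety, this yields an isomorphism $X\setminus D\cong P[D]_u\times Z$ sending $(u,z)$ to the point $uz$. By Proposition~\ref{fiber}, $\pi_D\equiv o$ on $Z$, and $\pi_D$ is $Q$-equivariant.

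Next, comparing root spaces I would check the two product decompositions $P[D]_u=(P[D]_u\cap M)\mal(P[D]_u\cap Q_u)$, with $P[D]_u\cap Q_u$ normal in $P[D]_u$ (no root of $P[D]_u\subseteq P_u$ lies in $\Delta(Q_u^-)\subseteq\Delta^-$), and $L[D]\cap Q=(L[D]\cap M)\rtimes(L[D]\cap Q_u)$, the Levi decomposition of the parabolic $Q\cap L[D]$ of the reductive group $L[D]$. The core point is then the stabilizer of $o$ in $P[D]_u$. Since $Q_u$ and $Z(M)$ act trivially on $\PP(V_\chi(M)^*)$, and since $P[D]_u\cap M$, being the unipotent radical of a parabolic of $M$ opposite to $\mathrm{Stab}_M(o)=P_M^-$, acts simply transitively on the open cell (so $\mathrm{Stab}_{P[D]_u\cap M}(o)=\{e\}$), I obtain $\mathrm{Stab}_{P[D]_u}(o)=P[D]_u\cap Q_u$.

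Now I would identify the fiber. For $y\in\pi_D^{-1}(o)\subseteq X\setminus D$, writing $y=uz$ with $u\in P[D]_u$, $z\in Z$, the equality $o=\pi_D(y)=u\mal\pi_D(z)=u\mal o$ forces $u\in P[D]_u\cap Q_u$; conversely $(P[D]_u\cap Q_u)\mal Z\subseteq\pi_D^{-1}(o)$ since $P[D]_u\cap Q_u\subseteq Q_u$ fixes $o$. Hence $\pi_D^{-1}(o)=(P[D]_u\cap Q_u)\mal Z$, which under $X\setminus D\cong P[D]_u\times Z$ is the closed subvariety $(P[D]_u\cap Q_u)\times Z$; since $X\setminus D$ is irreducible so is $Z$, hence so is this fiber. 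For the other description: $L[D]\cap Q\subseteq L[D]$ acts on $Z$ and normalizes $P[D]_u\cap Q_u$, so $H:=(L[D]\cap Q)\ltimes(P[D]_u\cap Q_u)$ is defined; the elementary identity $[H_0\ltimes N]*_{H_0}Z\cong N\times Z$ gives $H*_{L[D]\cap Q}Z\cong(P[D]_u\cap Q_u)\times Z$, and the morphism $[h*z]\mapsto hz$ realizes $H*_{L[D]\cap Q}Z$ as an isomorphism onto $\pi_D^{-1}(o)$ (it is well defined because $L[D]\cap Q$ acts on $Z$, and its image lies in $\pi_D^{-1}(o)$ because $H$ fixes $o$, as $L[D]\cap M\subseteq P_M^-$ and $L[D]\cap Q_u,\ P[D]_u\cap Q_u\subseteq Q_u$). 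Finally, for arbitrary $y'\in M/P_M^-$, transitivity of $M\subseteq Q$ on $M/P_M^-$ together with $M$-equivariance of $\pi_D$ gives $\pi_D^{-1}(y')=m\mal\pi_D^{-1}(o)$ for some $m\in M$, so every fiber is isomorphic to $\pi_D^{-1}(o)$ and hence irreducible.

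The step I expect to be the main nuisance is the combinatorial bookkeeping with root subgroups — especially $\mathrm{Stab}_{P[D]_u\cap M}(o)=\{e\}$ and the two product decompositions — combined with the care needed because $P[D]\not\subseteq Q$ in general, so $Q$-equivariance of $\pi_D$ is available only for elements of $P[D]_u$ and of $L[D]\cap Q$, not for all of $L[D]$; this is why one first pushes everything into the form $y=uz$ with $u\in P[D]_u$ before applying equivariance.
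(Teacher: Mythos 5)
Your argument is correct and takes essentially the same route as the paper: both rest on Proposition~\ref{open_cell} (the open Bruhat cell has preimage $X\setminus D$), the Local Structure Theorem decomposition $X\setminus D\cong P[D]_u\times Z$ refined via $P[D]_u=(P[D]_u\cap M)\ltimes(P[D]_u\cap Q_u)$, the inclusion $Z\subset\pi_D^{-1}(\pi_D(x_0))$ from Proposition~\ref{fiber}, and $M$-transitivity on $M/P_M^-$. The only cosmetic difference is that the paper gets irreducibility directly from the product decomposition of the irreducible open set $X\setminus D$ over the open cell, whereas you first identify the fiber as $(P[D]_u\cap Q_u)\times Z$ and then invoke irreducibility of $Z$; both work, and your extra bookkeeping (the stabilizer computation $\mathrm{Stab}_{P[D]_u}(o)=P[D]_u\cap Q_u$ and the Levi decomposition of $L[D]\cap Q$) is the right level of detail for the ``immediate'' claim the paper leaves to the reader.
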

\begin{proof}
Let $x\in M/P^-_M$ be the point corresponding to the right coset $eP^-_M$.
The preimage $\pi_D^{-1}((P[D]_u\cap M)x)$ of the open cell in $M/P^-_M$ is irreducible, being
the dense open subset $X\setminus D$ in $X$. It is isomorphic to $P[D]_u\cap M\times \pi_D^{-1}(x)$,
which implies the  irreducibility  of  $\pi_D^{-1}(x)$.  Since the fibers of $\pi_D$ are permuted transitively via the action of $M$
this proves the first part of the  proposition. The second part follows immediately from the isomorphism
$X\setminus D\cong P[D]_u\cap M\times P[D]_u\cap Q_u \times Z$, inclusion $Z\subset \pi_D^{-1}(\pi_D(x_0))$ and the freeness of the
action of $P[D]_u\cap M$ on $M/P^-_M$.
\end{proof}

Consider the set of fundamental weights $\{\omega_\alpha \}$ of $[M,M]$ where  $\alpha\in\Pi_M$.

\begin{proposition}{\label{linear_comb_of_shub_cells}} Let $\alpha$ be a simple root in $\Pi_M\setminus \Pi_L$ and  $\omega_\alpha$
be the corresponding  fundamental weight of $M$.  Then there exists a unique prime   $B$-divisor $D_\alpha\subset X$
 such that the restriction of the weight  $\chi_{D_\alpha}$ to ${[M,M]\cap T}$ is equal to $\omega_\alpha$.  Let   $D$ be a $B$-invariant divisor
 of weight $\chi_D$ such that $\chi_D|_{[M,M]\cap T}=\sum_{\alpha \in \Pi_M\setminus \Pi_L} n_\alpha\omega_\alpha$.
 Then $D=\sum_{\alpha \in \Pi_M\setminus \Pi_L} n_\alpha D_\alpha+D_0$
 for some $Q$-invariant divisor $D_0$.
\end{proposition}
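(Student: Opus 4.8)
The plan is to reduce everything to one structural dichotomy for an individual prime $B$-divisor, obtained by running that divisor through Theorem \ref{main thm}.

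First I would prove the dichotomy. Let $E$ be any prime $B$-divisor. Recall from Section 1 that $\langle\chi_E,\beta^\vee\rangle=0$ for $\beta\in\Delta_{L[E]}$, that $\chi_E$ is $P[E]$-regular (the effective divisor $E$ has stabilizer $P[E]$), and that $\chi_E$ is dominant; hence for a simple root $\beta$ one has $\langle\chi_E,\beta^\vee\rangle=0$ iff $\beta\in\Pi_{L[E]}$, and $>0$ otherwise. Since $P\subseteq P[E]$ we have $\Pi_L\subseteq\Pi_{L[E]}$, so $\chi_E|_{[M,M]\cap T}=\sum_{\beta\in\Pi_M}\langle\chi_E,\beta^\vee\rangle\omega_\beta$ vanishes iff $\Pi_M\subseteq\Pi_{L[E]}$, i.e. iff $Q\subseteq P[E]$, i.e. iff $E$ is $Q$-invariant. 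If $E$ is not $Q$-invariant, pass to a $G$-linearized multiple and apply Theorem \ref{main thm} with $D=E$ (the hypothesis $T\cap[M,M]\subseteq L_0$ holds by the choice of $Q$): $\pi_E$ maps its domain onto the flag variety $M/(M\cap P[E]^-)$, and by Proposition \ref{open_cell} the complement of $X\setminus E$ is $\pi_E^{-1}$ of the complement of the open Bruhat cell. As a Cartier divisor on the flag variety that complement equals $\sum_{\beta\in\Pi_M\setminus\Pi_{L[E]}}\langle\chi_E,\beta^\vee\rangle S_\beta$, the $S_\beta$ being the $B_M$-stable Schubert divisors and all coefficients positive (a standard fact about flag varieties). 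Pulling back, the divisor $E$ restricted to the domain of $\pi_E$ equals $\sum_\beta\langle\chi_E,\beta^\vee\rangle\,\overline{\pi_E^{-1}(S_\beta)}$, and each $\overline{\pi_E^{-1}(S_\beta)}$ is a prime $B$-divisor ($B$-stability since $\pi_E$ is $Q$-equivariant and $B=B_M Q_u$; irreducibility, codimension one, and multiplicity one from the fibre structure of $\pi_E$, see below), distinct for distinct $\beta$. As $E$ is prime, exactly one $\beta=:\gamma_E\in\Pi_M\setminus\Pi_{L[E]}\subseteq\Pi_M\setminus\Pi_L$ can occur, with $\langle\chi_E,\gamma_E^\vee\rangle=1$; thus $\chi_E|_{[M,M]\cap T}=\omega_{\gamma_E}$ and $E=\overline{\pi_E^{-1}(S_{\gamma_E})}$.

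With the dichotomy in hand, uniqueness and the decomposition follow formally. For uniqueness, suppose $E\ne E'$ are distinct prime $B$-divisors with $\chi_E|_{[M,M]\cap T}=\chi_{E'}|_{[M,M]\cap T}=\omega_\alpha$; both are non-$Q$-invariant. Apply the same analysis to a $G$-linearized multiple $D'$ of $E+E'$: here $P[D']=P[E]\cap P[E']$, and $P[D']$-regularity forces $\Pi_M\setminus\Pi_{L[D']}=\{\alpha\}$, so $D'$ restricted to the domain of $\pi_{D'}$ is an integer multiple of the single prime divisor $\overline{\pi_{D'}^{-1}(S_\alpha)}$; but $E$ and $E'$ are not $M$-stable, hence not contained in the $M$-stable base locus $\bigcap_{m}mD'$, so $D'$ has two distinct components meeting that domain — a contradiction. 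For existence, take $D$ a $G$-linearized multiple of the sum of all prime colours of $X$, so $P[D]=P$ and $\chi_D$ is $P$-regular; by the dichotomy the non-$Q$-invariant colours occurring in $D$ are exactly the distinct prime divisors $\overline{\pi_D^{-1}(S_\beta)}$, $\beta\in\Pi_M\setminus\Pi_L$, and $\beta\mapsto\gamma_{\overline{\pi_D^{-1}(S_\beta)}}$ is, by uniqueness, an injection of $\Pi_M\setminus\Pi_L$ into itself, hence a bijection; so for each $\alpha$ there is a (unique) prime $B$-divisor $D_\alpha$ with $\chi_{D_\alpha}|_{[M,M]\cap T}=\omega_\alpha$. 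Finally, given an arbitrary $B$-divisor $D=\sum_i a_iD_i$ with $\chi_D|_{[M,M]\cap T}=\sum_\alpha n_\alpha\omega_\alpha$, the dichotomy says each $D_i$ is either $Q$-invariant or equal to some $D_\alpha$; collecting the $Q$-invariant ones into a $Q$-invariant divisor $D_0$ gives $D=\sum_\alpha m_\alpha D_\alpha+D_0$, and restricting the additive identity $\chi_D=\sum_\alpha m_\alpha\chi_{D_\alpha}+\chi_{D_0}$ to $[M,M]\cap T$ (where $\chi_{D_0}$ vanishes) and using linear independence of $\{\omega_\alpha\}_{\alpha\in\Pi_M\setminus\Pi_L}$ forces $m_\alpha=n_\alpha$.

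The hard part is the claim used above that $\overline{\pi_E^{-1}(S_\beta)}$ is a reduced irreducible divisor occurring in the pullback of $S_\beta$ with multiplicity one. Over the dense $B_M$-orbit $O_\beta\subseteq S_\beta$ the map $\pi_E$ restricts to a homogeneous $B_M$-bundle, so its total space is irreducible (the fibre is irreducible by Proposition \ref{irr_fibers}), of codimension one in the domain, with closure a prime divisor; and over $O_\beta$ this bundle is flat with fibre the reduced variety $\pi_E^{-1}(y_\beta)\cong(P[E]_u\cap Q_u)\times Z$ of Proposition \ref{irr_fibers}, so the pullback of the reduced Cartier divisor $S_\beta$ stays reduced along that component. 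The rest is routine bookkeeping with dominant weights and the containments $\Pi_L\subseteq\Pi_M$ and $\Pi_L\subseteq\Pi_{L[E]}$.
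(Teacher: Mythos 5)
Your argument is correct and rests on the same machinery as the paper's proof: Theorem \ref{main thm}, Propositions \ref{open_cell} and \ref{irr_fibers}, and the pullback of Schubert divisors under $\pi_D$. The organization differs in a noticeable way, so a brief comparison is in order. The paper fixes once and for all an effective $B$-divisor $D$ with $P[D]=P$, defines $D_\alpha:=\pi_D^{-1}(F_\alpha)$ directly (existence is then manufactured, not deduced), and only invokes the $\pi$-map attached to a \emph{different} divisor in the uniqueness argument ($\pi_{D_\alpha+D_\alpha'}$). You instead prove a ``dichotomy'' for an arbitrary prime $B$-divisor $E$ by running $E$ through its own map $\pi_E$, and then extract uniqueness and existence formally from the dichotomy. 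The two routes are essentially equivalent in substance; yours is more verbose on existence---the bijection argument via the ``sum of all colours'' is superfluous, since for a fixed $D$ with $P[D]=P$ the pullback $\pi_D^{-1}(F_\beta)$ already has restricted weight exactly $\omega_\beta$, so one can set $D_\beta:=\pi_D^{-1}(F_\beta)$ outright as the paper does (and one must anyway be careful that a ``sum of all prime colours'' is a finite sum; any $D$ with $P[D]=P$ suffices). On the other hand you make explicit one point the paper glosses over: the scheme-theoretic pullback $\pi_E^*(F_\beta)$ is reduced, i.e.\ equals the prime $\pi_E^{-1}(F_\beta)$ with multiplicity one. This is genuinely needed to conclude that the restricted weight is $\omega_\alpha$ rather than a proper rational fraction of it, and the paper's statement that ``$nD_\alpha$ is the zero divisor of a section of $\pi_D^*(\mathcal O(nF_\alpha))$'' tacitly assumes it. Your justification---$\pi_E$ restricted over the dense $B_M$-orbit in $F_\beta$ is a homogeneous $B_M$-bundle, hence flat with the reduced integral fibre $(P[E]_u\cap Q_u)\times Z$, so the pullback of a reduced Cartier divisor stays reduced---is sound (one can also argue: $\pi_E$ is $M$-equivariant with $M$ transitive on the target, so generic flatness propagates to flatness everywhere, and flatness over a reduced base with geometrically reduced fibres forces the total space of the pullback to be reduced). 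This is a small but worthwhile improvement in rigour over the paper's version.
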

\begin{proof}
Consider an effective $B$-invariant divisor $D$ (see Lemma \ref{D_CART}) such that $P[D]=P$ and let us denote $P_M=P\cap M$.
For a divisor $F_\alpha=\overline{B_Ms_{\alpha}P_M^-/P_M^-}$ (with the stabilizer  $P_\alpha$ in $M$) let us take a natural number $n$ such that $nF_\alpha$
is $M$-linearized.
 By Proposition \ref{irr_fibers} the $B$-divisor $D_\alpha:=\pi_D^{-1}(F_\alpha)$ is irreducible.
  $nD_\alpha$ is the zero divisor of a section of the
$M$-linearized line bundle $\pi_D^*(\mathcal O(nF_\alpha))$ which is  
 a pullback  the $B\cap M$-semi-invariant section $\sigma_{nF_\alpha}$.  We also have $\chi_{D_\alpha}|_{[M,M]\cap T}=\omega_\alpha$.

Let $D$ be a prime $B$-invariant divisor that is not $M$-invariant.  Since $\pi_D(D)$  is prime, by Proposition \ref{open_cell},
it is equal to a prime Schubert divisor corresponding to some simple root $\alpha \in \Pi_M\setminus \Pi_L$.
Thus $\chi_D|_{[M,M]\cap T}$ is equal to the restriction of the weight of this Schubert divisor which is $\omega_\alpha$.

 To prove the uniqueness of  $D_\alpha$ assume that we have two effective divisors $D_\alpha$ and $D_\alpha'$
 such that the restrictions to $[M,M]\cap T$  of $\chi_{D_\alpha}$ and $\chi_{D_\alpha'}$ are proportional to $\omega_\alpha$.
 This implies that the stabilizer in $M$ of $D_\alpha+D_\alpha'$ is equal to $P_\alpha$.
 Consider the map $\pi_{\alpha}:=\pi_{D_\alpha+D_\alpha'}$.
  By Theorem \ref{main thm} the image of $\pi_{\alpha}$ is equal to $M/ P^-_\alpha$  and by Proposition \ref{open_cell}
 the divisor ${D_\alpha+D_\alpha'}$ maps to the prime divisor $F_{\alpha}$
 which is the only $B_M$ invariant divisor in  $M/P^-_\alpha$.
This contradicts the irreducibility of the fibers of $\pi_\alpha$  and proves the first part of the corollary.

From the above we get a decomposition $D=\sum_{\alpha \in \Pi_M\setminus \Pi_L} m_\alpha D_\alpha+D_0$ for some $Q$-invariant $D_0$, and integer $m_\alpha$.
Comparing the $([M,M]\cap T)$-weights  of both sides of this  equality and using  the equality $\chi_{D_\alpha}|_{[M,M]\cap T}=\omega_\alpha$, we get    $\chi_D|_{[M,M]\cap T}=\sum_{\alpha \in \Pi_M\setminus \Pi_L} m_\alpha \omega_\alpha$.
From the linear independence of fundamental weights  we  have $m_\alpha=n_\alpha=\langle \chi_D, \alpha^\vee \rangle$.
\end{proof}

\begin{corollary}{\label{rat_funct}} Let $D$ be a $B$-divisor such that its weight $\chi_D$ has zero restriction to $[M,M]\cap T$.
Then the divisor $D$ is $Q$-invariant. In particular every $B$-semi-invariant rational function on $X$ is $Q$-semi-invariant.
\end{corollary}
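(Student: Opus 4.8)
The plan is to derive both statements directly from Proposition \ref{linear_comb_of_shub_cells}. First I would reduce to the case of an \emph{effective} $B$-divisor: by Lemma \ref{D_CART} we may, after removing a proper $G$-stable subset, assume every prime component of $D$ is Cartier, and by adding a suitable $G$-invariant (hence $Q$-invariant) divisor we may assume $D$ itself is effective and $G$-linearized, with canonical section $\sigma_{\chi_D}$. Passing to effective divisors is harmless for the conclusion ``$D$ is $Q$-invariant'' since $Q$-invariance is additive in $D$ and holds automatically for $G$-invariant divisors.

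Next, apply Proposition \ref{linear_comb_of_shub_cells} to this effective $D$. That proposition gives a decomposition
$$
D=\sum_{\alpha\in\Pi_M\setminus\Pi_L} n_\alpha D_\alpha + D_0
$$
with $D_0$ a $Q$-invariant divisor and $n_\alpha=\langle\chi_D,\alpha^\vee\rangle$, the coefficients being pinned down by comparing $([M,M]\cap T)$-weights and using the linear independence of the fundamental weights $\omega_\alpha$ of $[M,M]$. The hypothesis is precisely that $\chi_D|_{[M,M]\cap T}=0$, i.e. $\langle\chi_D,\alpha^\vee\rangle=0$ for every $\alpha\in\Pi_M$, so all $n_\alpha$ vanish and $D=D_0$ is $Q$-invariant. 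This settles the first assertion.

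For the second assertion, let $f\in\Bbb K(X)^{(B)}_\chi$ be a $B$-semi-invariant rational function of weight $\chi$. Since $f$ is a $B$-eigenvector, it is an eigenvector for $T$, and as $T\subset M$ we have $\chi\in\Xi(T)$; moreover $f$ is fixed by $B\cap[M,M]$ up to the character $\chi$, so $\chi|_{[M,M]\cap T}=0$ because a nonzero rational function cannot be a genuine (non-trivial) semi-invariant of the semisimple group $[M,M]$ with a weight lying in the root lattice unless that weight is zero — more concretely, $\chi$ is the weight of $f$ as a $B$-semi-invariant, and the divisor $\div(f)=\sum_i c_i E_i$ is a $B$-invariant divisor of weight $\chi$, so $\chi|_{[M,M]\cap T}=0$. (If one prefers to avoid this representation-theoretic remark, one can instead argue via the local structure theorem: $\Bbb K(X)^{(B)}=\Bbb K(Z_0)^{(L)}$ and the weight of any such function is a character of $A=L/L_0$, which is trivial on $[M,M]\cap T\subset L_0$.) Applying the first part of the corollary to the $B$-divisor $\div(f)$, we conclude $\div(f)$ is $Q$-invariant; hence for every $q\in Q$ the function $q\cdot f$ has the same divisor as $f$, so $q\cdot f=c(q)f$ for a scalar $c(q)\in\Bbb K^*$, and $q\mapsto c(q)$ is a character of $Q$. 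Thus $f$ is $Q$-semi-invariant.

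The main obstacle is the bookkeeping in the reduction to the effective, Cartier, $G$-linearized case and the verification that $\chi_D|_{[M,M]\cap T}=0$ is really the hypothesis being used in Proposition \ref{linear_comb_of_shub_cells}; once that identification is made, the corollary is essentially immediate. I expect the only subtlety is making sure that shrinking $X$ by a proper $G$-stable subset (to invoke Lemma \ref{D_CART}) does not affect $\Bbb K(X)$ or the $Q$-action, which is clear since such a subset has codimension $\geq 1$ and everything in sight is birational.
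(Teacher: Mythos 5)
Your proof is correct and takes the route the paper evidently intends: the corollary is stated directly after Proposition~\ref{linear_comb_of_shub_cells} precisely because it follows by setting all $n_\alpha=\langle\chi_D,\alpha^\vee\rangle=0$. Two small remarks.

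First, the preliminary reduction to effective, $G$-linearized $D$ is unnecessary: Proposition~\ref{linear_comb_of_shub_cells} is stated for an arbitrary $B$-invariant divisor, and its proof works at the level of individual prime components (each non-$M$-stable prime component must be one of the $D_\alpha$, the rest are collected into the $Q$-invariant $D_0$), so no effectivity is needed. Moreover, the reduction step as you phrased it --- ``by adding a suitable $G$-invariant divisor we may assume $D$ itself is effective'' --- is not correct as stated: a $B$-stable but non-$G$-stable prime divisor appearing with a negative coefficient cannot be cancelled by adding something $G$-invariant. Since you never actually use effectivity afterwards, the cleanest fix is simply to delete this paragraph and apply the proposition directly, noting only that $\chi_D|_{[M,M]\cap T}$ is well defined because the ambiguity in $\chi_D$ (a character of $G$) vanishes on $[M,M]\cap T\subset [G,G]$.

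Second, for the ``in particular'' clause your first justification of $\chi|_{[M,M]\cap T}=0$ is not a valid argument: $f$ being $B\cap[M,M]$-semi-invariant of some weight does not by itself force that weight to vanish, and $f$ is not a priori an $[M,M]$-semi-invariant. The parenthetical alternative you give is the right one and should be the main argument: by the local structure theorem $\Bbb K(X)^{(B)}=\Bbb K(Z_0)^{(L)}$, so $\chi\in\Xi(A)$ with $A=L/L_0$, hence $\chi$ is trivial on $L_0\supset T\cap[M,M]$. With $\div(f)$ thereby $Q$-invariant, concluding $Q$-semi-invariance of $f$ is exactly the Rosenlicht-type step the paper invokes elsewhere; your direct cocycle argument is fine provided one notes that invertible regular functions on an irreducible variety admitting a $B$-eigenvector field are constants in the relevant setting, or simply cites Rosenlicht as the paper does.
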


\section{Refined local structure theorem}

Using  Theorem \ref{main thm} we shall derive a stronger
version of the local structure theorem in the sense of Timashev
\cite[Thm.3]{tim}. Let us first  introduce some notation.

We shall choose  a  parabolic subgroup $Q\supset P$ with Levi subroup $M$, for which
$[M,M]\cap T\subset L_0$.
Consider the set $\mathcal E$ of  prime $B$-divisors that occur
in the divisors $(f)$ of rational $B$-semi-invariant functions
$f\in k(X)^{(B)}$. In other words: $$\mathcal E:=\{E|\ \exists
f\in k(X)^{(B)}, \ (f)=\sum a_DD,  \ a_E\neq 0 \}.$$

We denote by $Q\supset P$ the parabolic subgroup that is the
stabilizer of all $B$-divisors from $\mathcal E$.
Since $Q$  stabilizes the divisor $(f)$ for $f\in k(X)^{(B)}$, by  a consequence of a theorem of Rosenlicht \cite[Thm.3.1]{VP}
we obtain that $f$ is $Q$-semi-invariant. Let $M$ be the Levi
subgroup of $Q$ containing the maximal torus $T$. 
We  note that $T\cap [M,M]\subset
L_0$, since  $T\cap [M,M]$ is acting trivially on $f\in
k(X)^{(B)}$. By  Corollary \ref{rat_funct} this parabolic subgroup $Q$ is maximal in the set of parabolic subgroups
  stabilizing some $B$-divisor (in particular containing $P$) and satisfying the property  $T\cap [M,M]\subset
L_0$.


We choose an effective  $B$-divisor $D$ such that the stabilizer of $D$ in $Q$ is equal to $P$.
Let us also choose an effective  divisor $E\in \mathcal E$,  such that $Q$ is the stabilizer
of $E$ in $G$. In particular the stabilizer of $D\cup E$ is equal to $P$. Let $\sigma_E\in H^0(X,\mathcal O(E))^{(Q)}$
 be the $Q$-semi-invariant section that defines $E$.
 Since $E$ is effective the weight $\chi_E$ of the divisor $E$ is $Q$-regular.
Let us denote by  $X_1:=\X\setminus E$  an open
$Q$-invariant subset and by $X^\circ$ the $P$-invariant subset $X\setminus  (D \cup
 E)$. We recall that $\X=X\setminus \bigcap_{m\in M} mD$.
We  also consider the map:
 $$\psi_E: \ X \setminus
E\longrightarrow \gg^*: \ x \longrightarrow l_x(\xi)=\frac{\xi \sigma_E}{\sigma_E}(x).$$

\begin{theorem}\label{lst_mine_version} There exists a point  $x_0\in X^\circ$
 such that $M_{\pi_D(x_0)}=P^-_M$ and $M$ is the stabilizer of $\psi_E(x_0)$.
For the $M$-stable closed
subset $Z_1:= \psi_E^{-1}\psi_E(x_0)$ of $X_1$ we have a $Q$-equivariant
 isomorphism
$$Q*_M Z_1\longrightarrow X_1.$$
 Moreover $Z_1\cong
M/P^-_M\times Z_0$ for $Z_0 :=\pi_D^{-1}\pi_D(x_0)\cap Z_1$. Here
  $M$ acts on the product $M/P^-_M\times Z_0$ by  left multiplication on $M/P^-_M$ and
   via the quotient $M/M_0\cong A$ on $Z_0$.
\end{theorem}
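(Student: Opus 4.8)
The plan is to combine the two local structure theorems already at our disposal --- Knop's Theorem~\ref{lst} applied to the $Q$-semi-invariant divisor $E$, and Theorem~\ref{main thm} together with its corollary~\ref{CorLST} applied to the divisor $D$ --- and then to glue them along the section~$Z_1$. First I would apply Theorem~\ref{lst} to the effective $Q$-semi-invariant divisor $E$ with its $Q$-regular weight $\chi_E$: since the stabilizer $P[E]$ equals $Q$, the image of $\psi_E$ is a single $Q$-orbit $\chi_E+\qq_u$, and choosing $x_0$ so that $L':=G_{\psi_E(x_0)}$ contains $T$, part~(ii) gives $L'=M$ (the Levi of $Q$ containing $T$), while part~(iii) ensures that the kernel of $M$ on $Z_1:=\psi_E^{-1}\psi_E(x_0)$ contains $[M,M]$, and the isomorphism $Q*_M Z_1\to X_1$. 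This handles the first displayed isomorphism and the statement that $M$ stabilizes $\psi_E(x_0)$, provided I can choose $x_0$ to lie in $X^\circ = X\setminus(D\cup E)$ and additionally satisfy $M_{\pi_D(x_0)}=P_M^-$; by genericity this is fine, since $\psi_E^{-1}$ of a single point is irreducible of the right dimension and meets the dense open $X^\circ$, and within it the locus where $\pi_D$ lands in the open Bruhat cell $B_MP_M^-/P_M^-$ (equivalently, where the stabilizer is exactly $P_M^-$) is dense by Proposition~\ref{open_cell}.

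Next I would establish the product decomposition $Z_1\cong M/P_M^-\times Z_0$. The key point is that $\pi_D$ restricts to an $M$-equivariant morphism on $Z_1$ (note $Q_u$ acts trivially along the fibers of $\pi_D$ by Lemma~\ref{fibers_for_QU}, and $Z_1$ is $M$-stable), and by Theorem~\ref{main thm} its image on the dense open part is the flag variety $M/P_M^-$, with $\pi_D(x_0)$ the base point stabilized by $P_M^-$. So I would consider the map $Z_1\to M/P_M^-$ given by $\pi_D|_{Z_1}$ and the ``fiber'' $Z_0:=\pi_D^{-1}\pi_D(x_0)\cap Z_1$. Since $Z_1$ is $M$-stable and the map to $M/P_M^-$ is $M$-equivariant and surjective onto the homogeneous space, $Z_1$ is an $M$-equivariant fiber bundle over $M/P_M^-$ with fiber $Z_0$; i.e. $Z_1\cong M*_{P_M^-} Z_0$. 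To upgrade this to an honest product $M/P_M^-\times Z_0$ I need that $P_M^-$ acts on $Z_0$ through the torus quotient $A\cong M/M_0$: this follows because the unipotent radical $P[D]_u^-\cap M$ of $P_M^-$, together with $[M,M]\cap T\subset L_0$, acts trivially on $Z_0$. Indeed Corollary~\ref{CorLST} says the Knop cross-section $Z_0$ for $D$ sits inside a single $\pi_D$-fiber, and by the argument of Proposition~\ref{fiber} (Argument~1) the full group $L[D]\cap P_u$ --- hence after the opposite bookkeeping the unipotent radical of $P_M^-$ --- fixes that fiber pointwise on the relevant cross-section; combined with the triviality of $[M,M]\cap T$ and of $L_0$ on $Z_0$, the residual action of $P_M^-$ factors through $A$. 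Then $M*_{P_M^-}Z_0\cong M/P_M^-\times Z_0$ as $M$-varieties with the stated action: left multiplication on the first factor and the $A$-action on the second.

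The main obstacle, as I see it, is the bookkeeping needed to identify $Z_0$ as defined here ($=\pi_D^{-1}\pi_D(x_0)\cap Z_1$) with the cross-section $Z_0$ appearing in Theorem~\ref{lst}(iii) for $D$, and to verify that the two constructions are compatible --- that the point $x_0$ can simultaneously serve as the base point for $\psi_E$ (giving $G_{\psi_E(x_0)}=M$) \emph{and} for $\psi_D$ restricted appropriately (giving $M_{\pi_D(x_0)}=P_M^-$ and $L_0$ acting trivially on $Z_0$). Concretely one applies Theorem~\ref{lst} to the action of $M=L[E]$ on $Z_1$ with the divisor $D\cap Z_1$ whose stabilizer in $M$ is $P_M=P\cap M$: this yields the $P_M$-cross-section inside $Z_1$, and one must check it matches the $\pi_D$-fiber via Corollary~\ref{CorLST} and Proposition~\ref{irr_fibers} (irreducibility of fibers). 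I expect this to be a routine but slightly delicate diagram-chase through the nested Levi decompositions $Q=Q_u\rtimes M$, $P_u=Q_u\rtimes(M\cap P_u)$, and $M=(M\cap P_u^-)\,M_0\,(M\cap P_u)$ near $P_M^-$; once the identifications are in place the product decomposition and the description of the $M$-action follow formally.
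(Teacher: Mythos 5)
Your proposal correctly identifies the fiber-bundle step $Z_1 \cong M*_{P_M^-}Z_0$ from the $M$-equivariance of $\pi_D|_{Z_1}$, but the two justifications you offer for the remaining step---that the unipotent radical $P_u^-\cap M$ of $P_M^-$ acts trivially on $Z_0$---do not work, and this is precisely the nontrivial content of the theorem. First, Theorem~\ref{lst}(iii) is only applicable when the stabilizer of the divisor equals $P(X)$; for $E$ the stabilizer is $Q\supsetneq P(X)$, so you cannot conclude $[M,M]$ lies in the kernel of the $M$-action on $Z_1$. In fact that conclusion is false whenever $Q\neq P$: if $[M,M]$ acted trivially on $Z_1$ then $M_0z$ would be a single point for every $z\in Z_1$, contradicting $M_0z\cong M/P_M^-$. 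Second, the appeal to Corollary~\ref{CorLST} and Proposition~\ref{fiber} (Argument~1) cannot give pointwise triviality: what Argument~1 of Proposition~\ref{fiber} shows is that $L[D]\cap P_u$ fixes the base point $e(M\cap P[D]^-)$ in the flag variety---i.e.\ it maps the $\pi_D$-fiber to itself---not that it acts trivially on that fiber. Moreover, with $D$ having stabilizer $P(X)$ one has $L[D]=L$, so $L[D]\cap P_u=L\cap P_u=\{e\}$ and the cited claim is vacuous here; the ``opposite bookkeeping'' meant to pass from $L\cap P_u$ to $P_u^-\cap M$ is not a bookkeeping maneuver at all.

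The paper closes this gap by an argument of a genuinely different nature. It shows that a general $Q_0$-orbit contains a general $P_0$-orbit as a dense open subset, using $\Bbb K(X)^{(B)}=\Bbb K(X)^{(Q)}$ together with Rosenlicht's theorem on separation of generic orbits by rational invariants. This forces $\overline{M_0z}=\overline{(P_u\cap M)z}$ for general $z\in Z_0$; combined with freeness of the $P_u$-action and $M$-equivariance of $\pi_D$ it yields $(M_0)_z=P_{M_0}^-$ for general $z$, which then extends to all $z\in Z_0$ by closedness of the fixed-point locus and the inclusion chain $P_{M_0}^-\subset(M_0)_z\subset(M_0)_{\pi_D(z)}=P_{M_0}^-$. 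An alternative (Remark~\ref{Rem_tim}) is Timashev's moment-map argument showing $\mm_0x\subseteq\pp_ux$ for general $x$. Either way, this is a substantive orbit-coincidence statement, not a ``routine diagram-chase through nested Levi decompositions''; your proof as written does not supply it.
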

\begin{proof} 
Consider a point $x_0\in X^\circ$; since the Levi subgroups of $Q$ are conjugate by the elements of $Q_u$,
 translating $x_0$ by the element of $Q_u$ we may assume that $M$ is the stabilizer of $\psi_E(x_0)$.
By Proposition \ref{open_cell}  $X^\circ$ is the preimage under $\pi_D$ of the open Bruhat cell $(P_u\cap M)P^-_M/P^-_M$ in $M/P^-_M$.
In particular translating $x_0$ by the element of $(P_u\cap M)$
we can assume that $x_0$ is  in $X^\circ$, the stabilizer of $\psi_E(x_0)$ is still $M$ and $M_{\pi_D(x_0)}=P_M^-$.

The isomorphisms $Q*_M Z_1\longrightarrow
X_1$ and $Q*_M Z_1 \cong Q_u\times Z_1$ follows from the local structure
theorem \ref{lst} applied to the point $x_0\in X^\circ$, the divisor $E$ and its
stabilizer $Q$. 
From the $M$-equivariance of $\pi_D$ and $M$-invariance of $Z_1$ we  see that $Z_1=MZ_0$.

\begin{lemma}\label{weak_local_structure}
We have the isomorphism $P*_LZ_0 \tilde{\longrightarrow }X^\circ,$
where the group $L$ acts on $Z_0$ via the quotient $L/L_0$.
\end{lemma}
\begin{proof}
Let us notice that
$
\pi_D(X^\circ)\supset \pi_D(Z_1\cap X^\circ)\supset P_M\pi_D(x_0)\cong P_M/L.
$
This implies that the variety $Z_1\cap X^\circ$ (which is equal to $Z_1\setminus D$)  projects $P_M$-equivariantly onto the open cell $\pi_D(X^\circ)=(P_u\cap M)P^-_M/P^-_M$ that is isomorphic to
  $P_M/L$, and in particular $Z_1\setminus D\cong P_M*_LZ_0$.

The action of $Q_u$ is free on $X^\circ$ and we have $X^\circ \cong Q_u\times (Z_1\setminus D)$. Since $P=Q_u\rtimes P_M$
we have: $$X^\circ\cong P*_{P_M}(Z_1\setminus D)\cong P*_LZ_0.$$

Let us apply the local structure theorem to the effective divisor $ D \cup
 E$ and the group $P$. We get that $X^\circ\cong P*_LZ'_0$ for some $Z'_0$ with  trivial action  of $L_0$.
The $L$-equivariant isomorphisms $X^\circ/P_u\cong Z'_0\cong Z_0$ imply the triviality of the action of $L_0$ on $Z_0$ which proves the lemma.
\end{proof}

We shall prove the isomorphism $Z_1\cong M/P^-_M\times Z_0$ in several steps.
 Let us first prove that the $P_u\cap M$-orbit of each point
$z\in Z_1\cap X^\circ$  is contained as a dense open subset in the
orbit $M_0z$. This was first noticed by Timashev (\cite{tim}) in less general
settings for a general point of $Z_1$.

{\bf Step 1.} Since the divisor  of any  $B$-semi-invariant rational function $f\in \Bbb K(X)^{(B)}$ is
 $Q$-semi-invariant, as  noted before this function should be $Q$-semi-invariant as well.
 Thus we get
$\Bbb K(X)^{(B)}=\Bbb K(X)^{(P)}=\Bbb K(X)^{(Q)}$. Since $\Bbb K(X)^U$ is generated by $\Bbb K(X)^{(B)}$, these equalities imply that
$\Bbb K(X)^U=\Bbb K(X)^{P_0}=\Bbb K(X)^{Q_0}$.
We shall prove that a general $Q_0$-orbit contains a general $P_0$-orbit as a dense open subset.
Suppose the contrary. Then a general $Q_0$-orbit contains an infinite family of $P_0$-orbits.
Due to the Rosenlicht Theorem  general  $P_0$-orbits are separated by rational invariants
from $k(X)^{P_0}$. This implies that there exists an invariant  $f\in k(X)^{P_0}$ which has non-constant value
on  the family of $P_0$-orbits contained in a general $Q_0$-orbit, that contradicts the inclusion $f\in k(X)^{Q_0}$.

{\bf Step 2.} The isomorphism  $Q*_M Z_1\cong Q_u\times Z_1\longrightarrow X_1$  implies
that for $z\in Z_1$ we have the following isomorphisms
$Q_0z\cong Q_u\times M_0z$  and  $P_0z\cong Q_u\times (P\cap M_0)z.$
From the above step and since $X^\circ= P_uZ_0\cong P_u\times Z_0$,   for a sufficiently general
point $z\in Z_0$ we have
$\overline{Q_0z}=\overline{P_0z}$.
Thus for a  sufficiently general point $z\in Z_0$ the previous equalities give $$\overline{(P_u\cap M)z}=\overline{M_0z}.$$

{\bf Step 3.} Let us prove that $M_0z\cong {M_0}/P_{M_0}^{-}\cong M/P_M^-$ for $z\in Z_0$, where $P_{M_0}^-:=P^-\cap M_0$.

{\bf Step 3a).} First assume that $z\in Z_0$ is  a sufficiently general point.
Let us notice that  $(P_u\cap M)z$ maps isomorphically to $(P_u\cap M)P^-_M/P^-_M$.
The $M$-equivariance of $\pi_D$ and the fact
that $\pi_D$ is an isomorphism on the  dense subset $(P_u\cap M_0)z$ in $M_0z$ imply that $\pi_D$
maps $M_0z$ isomorphically to $M/P_M^-$. In addition we have
$(M_0)_z=(M_0)_{\pi_D(z)}=P_{M_0}^-$.

{\bf Step 3b).} Consider an arbitrary point $z\in Z_0$.
 Let us recall that for a general point $z_0\in Z_0$
we have $(M_0)_{z_0}=P_{M_0}^-$. Since the set of $P_{M_0}^-$-fixed points is closed in $Z_0$ we get the
inclusion $(M_0)_z\supset P_{M_0}^-$.
 Using the $M$-equivariance of $\pi_D$  we  get the following sequence of inclusions:
$$P_{M_0}^-\subset
(M_0)_z\subset (M_0)_{\pi_D(z)}=P^-_{M_0}.$$
The right
and the left side of this sequence are equal, thus all the inclusions must be
equalities, in particular $(M_0)_z=P_{M_0}^-$ for all $z\in Z_0$.

  \vspace {2ex}

   Since $Z_1\cong M*_{P_M^-}Z_0$ and $P^-_{M_0}$ acts on $Z_0$ trivially  we  get
$Z_1\cong M/ P^{-}_M\times
Z_0$, where  $M$ acts on $M/ P^{-}_M$ by left
multiplication  and via the quotient $A=M/M_0=P_M^-/P_{M_0}^-$  on $Z_0$.
\end{proof}

 To get the variant of the local structure theorem
obtained by Timashev \cite[Thm.3]{tim}, we have to take the open subset of
$Z_0=\pi_D^{-1}\pi_D(x_0)\cap \psi_E^{-1}\psi_E(x_0)$ on which the
$A$-action is free. Taking a smaller open  subset of $Z_0$, we may suppose
that it is isomorphic to $A\times C$, where the action on $C$ is trivial.

\begin{corollary}(cf. \cite[Thm.3]{tim}) There is a locally closed subset $C$ of $X$ such that the map
$$Q*_M (M/P_M^{-}\times
A\times C)\longrightarrow X$$
is an isomorphism to an open $Q$-invariant subset of $X$.
\end{corollary}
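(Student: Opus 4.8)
The plan is to assemble this corollary directly from Theorem~\ref{lst_mine_version} together with the standard ``free part'' observation already used for Knop's cross sections. First I would invoke Theorem~\ref{lst_mine_version} to obtain the point $x_0\in X^\circ$, the $M$-stable closed subset $Z_1=\psi_E^{-1}\psi_E(x_0)$, the $Q$-equivariant isomorphism $Q*_M Z_1\xrightarrow{\sim} X_1$ onto the open $Q$-invariant subset $X_1=\X\setminus E$, and the further product decomposition $Z_1\cong M/P_M^-\times Z_0$ with $Z_0=\pi_D^{-1}\pi_D(x_0)\cap Z_1$, where $M$ acts on $M/P_M^-$ by left multiplication and on $Z_0$ only through the quotient $A=M/M_0$. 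Chaining the two isomorphisms gives
$$
Q*_M\bigl(M/P_M^-\times Z_0\bigr)\;\xrightarrow{\ \sim\ }\;X_1\subset X,
$$
an isomorphism onto an open $Q$-invariant subset. It remains to replace $Z_0$ by $A\times C$ on a suitable open piece.

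Next I would carry out the routine reduction to a free $A$-action. The torus $A$ acts effectively on $Z_0$: indeed, by the discussion following Theorem~\ref{lst} the torus $A=L/L_0$ acts effectively on the Knop cross section, and here $Z_0$ carries the $A$-action through $M/M_0\cong A$ with $\Bbb K(Z_0)^{(A)}=\Bbb K(X)^{(B)}$ generating $\Bbb K(Z_0)^A$, so a dense-orbit-type argument (or simply the effectiveness of the torus action on a normal affine-like slice) shows that the locus $Z_0^{\mathrm{free}}\subset Z_0$ where $A$ acts with trivial stabilizer is open, $A$-invariant and nonempty. On this locus the geometric quotient $Z_0^{\mathrm{free}}\to Z_0^{\mathrm{free}}/A$ exists, and by generic triviality of torus quotients (a Rosenlicht-type fibration becomes Zariski-locally trivial for a torus after shrinking) there is a nonempty open $A$-invariant subset $Z_0'\subseteq Z_0^{\mathrm{free}}$ with an $A$-equivariant isomorphism $Z_0'\cong A\times C$, where $C$ is locally closed in $Z_0$ (hence in $X$) and $A$ acts trivially on $C$.

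Finally I would restrict the big isomorphism to the locus built from $Z_0'$. Since $M$ acts on $Z_0$ through $A$, the subset $M/P_M^-\times Z_0'$ is $M$-stable and open in $M/P_M^-\times Z_0\cong Z_1$; call $Z_1'$ the corresponding open $M$-stable subset of $Z_1$. Then $Q*_M Z_1'$ is open and $Q$-invariant in $Q*_M Z_1\cong X_1$, hence open in $X$, and restricting the isomorphism of Theorem~\ref{lst_mine_version} gives
$$
Q*_M\bigl(M/P_M^-\times A\times C\bigr)\;\xrightarrow{\ \sim\ }\;Q*_M Z_1'\;\subset\;X_1\;\subset\;X,
$$
an isomorphism onto an open $Q$-invariant subset of $X$, which is exactly the assertion. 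The only point requiring any care is the passage from ``effective'' to ``generically free with a product trivialization'' for the $A$-action on $Z_0$; everything else is a formal consequence of Theorem~\ref{lst_mine_version}. I expect that step — justifying the Zariski-local triviality $Z_0'\cong A\times C$ over the quotient — to be the main (and only) obstacle, and it is handled by the standard fact that a free action of a torus (a special group) admits local sections in the Zariski topology, so after shrinking the base one gets a genuine product.
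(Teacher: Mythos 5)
Your argument is correct and follows the same route the paper uses: Theorem \ref{lst_mine_version} gives $Q*_M(M/P_M^-\times Z_0)\xrightarrow{\sim} X_1$, and one then shrinks $Z_0$ to the locus of free $A$-action and trivializes (torus torsors being Zariski-locally trivial) to get $Z_0'\cong A\times C$. The paper states this in two sentences without elaboration; you have simply filled in the standard justifications for why the free locus is open, $A$-invariant and nonempty, and why the quotient admits a product trivialization after shrinking.
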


\begin{remark}\label{Rem_tim} In  Theorem \ref{lst_mine_version} we can take any parabolic subgroup $Q$ which is the stabilizer of
some  $B$-invariant divisor $E$ and that satisfies the condition $T\cap [M,M]\subset
L_0$. Corollary \ref{rat_funct} states that such a parabolic subgroup $Q$  stabilizes  the divisors of all $B$-semiinvariant rational functions on $X$
and the proof of Theorem \ref{lst_mine_version} can be generalized to the case of such $Q$.
However instead  of Step 1 of the proof of Theorem \ref{lst_mine_version}, one can use the   argument from Timashev \cite[Claims 1,2]{tim} to prove that $\mm_0x\subseteq \pp_ux$
for general $x\in X^\circ$.
In particular that will imply that $\qq_0x=\pp_ux$, as well as that $P_ux$ is open in $Q_0x$.
We shall reproduce the argument of Timashev for convenience of the reader. \end{remark}

\begin{lemma}(cf. \cite[Claims 1,2]{tim}) $\mm_0x = (\pp_u\cap \mm) x$ for sufficiently general $x\in Z_0$.
\end{lemma}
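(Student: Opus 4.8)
The plan is to isolate the one genuinely new ingredient — the inclusion $\mm_0x\subseteq\pp_ux$ for general $x$, which is Timashev's \cite[Claims~1,2]{tim} — and to derive everything else from the tangent‑space decompositions already available from Lemma~\ref{weak_local_structure} and Proposition~\ref{irr_fibers}. First I would record the trivial inclusion: every root occurring in $\pp_u\cap\mm$ lies in $\Delta_M$, hence $\pp_u\cap\mm\subseteq[\mm,\mm]\subseteq\mm_0$, so $(\pp_u\cap\mm)x\subseteq\mm_0x$ for every $x$. For the reverse inclusion I would use that, by Lemma~\ref{weak_local_structure}, $X^\circ\cong P*_LZ_0\cong P_u\times Z_0$ and $Z_1\setminus D\cong P_M*_LZ_0\cong(P_u\cap M)\times Z_0$ with $P_u\cap M$ acting freely; since $P_u=Q_u\rtimes(P_u\cap M)$, for $x\in Z_0$ this gives direct sums
$$T_xX^\circ=\qq_ux\oplus(\pp_u\cap\mm)x\oplus T_xZ_0,\qquad T_xZ_1=(\pp_u\cap\mm)x\oplus T_xZ_0.$$
As $M_0\subset M$ preserves $Z_1=\psi_E^{-1}\psi_E(x_0)$ (because $M$ stabilizes $\psi_E(x_0)$), we have $\mm_0x\subseteq T_xZ_1$; hence once we know $\mm_0x\subseteq\pp_ux=\qq_ux\oplus(\pp_u\cap\mm)x$, intersecting inside the first direct sum forces $\mm_0x\subseteq(\pp_u\cap\mm)x$, and the lemma follows. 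So the whole proof reduces to proving $\mm_0x\subseteq\pp_ux$ for general $x$.

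Next I would simplify $\mm_0x$ on $Z_0$: the group $L_0$, hence $Z(L_0)$, acts trivially there, so $\mm_0x=[\mm,\mm]x$. Writing $[\mm,\mm]=(\mm\cap\pp_u^-)\oplus([\mm,\mm]\cap\ll)\oplus(\mm\cap\pp_u)$ with respect to the parabolic $P_M=P\cap M$ of $M$, I note $[\mm,\mm]\cap\ll=(\tm\cap[\mm,\mm])\oplus\bigoplus_{\alpha\in\Delta_L}\gg_\alpha$, with $\tm\cap[\mm,\mm]\subseteq\ll_0$ by the standing hypothesis $T\cap[M,M]\subseteq L_0$ and $\gg_\alpha\subseteq[\ll,\ll]\subseteq\ll_0$ for $\alpha\in\Delta_L$; hence $([\mm,\mm]\cap\ll)x=0$. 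Therefore $\mm_0x=(\mm\cap\pp_u^-)x+(\mm\cap\pp_u)x$, and, by the previous paragraph, it remains only to prove that $(\mm\cap\pp_u^-)x\subseteq\pp_ux$ for general $x\in Z_0$.

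The hard part, and the step I expect to be the main obstacle, is exactly this last inclusion; here I would reproduce Timashev's argument from \cite[Claims~1,2]{tim}. It amounts to the assertion $\qq_0x=\pp_ux$, i.e. that a general $Q_0$‑orbit coincides with the $P_u$‑orbit it contains — the asymmetry between $P_u\cap M$, which acts freely on $X^\circ$, and the opposite directions $\mm\cap\pp_u^-$, which must act trivially on $Z_0$, being of Bia\l{}ynicki-Birula type. Concretely, I would pick a one‑parameter subgroup $\lambda\in C^\circ_M\cap\Lambda(T\cap[M,M])$, so that $\langle\lambda,\alpha\rangle>0$ for every $\alpha\in\Delta(\mm\cap\pp_u)$ while $\lambda$ acts trivially on $Z_0$; then, using Lemma~\ref{limpar} to contract the $(P_u\cap M)$‑directions and the $Q$‑equivariance of $\pi_D$ together with Lemma~\ref{fibers_for_QU} to dispose of the $\qq_u$‑part, one transports a velocity vector $\xi x$ with $\xi\in\gg_{-\alpha}\subset\mm\cap\pp_u^-$ into the closure of the $P_u$‑orbit by letting $t\to0$, and concludes that the $T_xZ_0$‑component of $\xi x$ vanishes for general $x$. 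This yields $\mm_0x\subseteq\pp_ux$ for general $x\in Z_0$, hence for general $x\in X^\circ$ after $P_u$‑translation, and combined with the two previous steps gives $\mm_0x=(\pp_u\cap\mm)x$. A posteriori the same conclusion is immediate from Step~3 of the proof of Theorem~\ref{lst_mine_version}, which shows $(M_0)_z=P^-_{M_0}$ for all $z\in Z_0$, so that $\dim\mm_0x=\dim(\pp_u\cap\mm)$ and equality follows from the trivial inclusion; the point of the present lemma, however, is to establish it independently, so that it can take the place of Step~1.
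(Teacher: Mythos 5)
Your proposal follows a genuinely different route from the paper's proof, which is worth comparing in detail. The paper argues \emph{dually}, in the cotangent space: it assumes $\mm_0x\neq(\pp_u\cap\mm)x$, deduces $\qq_0x\neq\pp_ux$, picks $\xi_x\in T_x^*X$ with $\xi_x|_{\pp_ux}=0$ and $\xi_x|_{\qq_0x}\neq0$, observes that $\xi_x$ lies in the $L_0$-invariant summand $\aa x\oplus T_x^*C$ of $T_x^*X\cong(\aa+\pp_u)\oplus T_x^*C$, and concludes via the moment map that $\mu_X(\xi_x)\in(\aa+\pp_u)^{L_0}\subseteq\aa+\qq_u\perp\qq_0$, a contradiction. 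That argument needs only the splitting $X^\circ\cong P_u\times A\times C$ and the moment map; it never touches $\pi_D$, $Z_1$, or Lemma~\ref{weak_local_structure}. Your proof instead argues directly with velocity vectors: reduce to $\mm_0x\subseteq\pp_ux$ using the direct sums $T_xX^\circ=\qq_ux\oplus(\pp_u\cap\mm)x\oplus T_xZ_0$ and $T_xZ_1=(\pp_u\cap\mm)x\oplus T_xZ_0$, then peel off $\ll_0x=0$ to get $\mm_0x=(\pp_u\cap\mm)x+(\mm\cap\pp_u^-)x$, and finally show $(\mm\cap\pp_u^-)x\subseteq\pp_ux$ by a one-parameter-subgroup argument. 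Both routes are sound, but the paper's is more economical and more self-contained (your paragraph~2 drags in $Z_1$, $\psi_E$, and Lemma~\ref{weak_local_structure}, none of which the paper needs).

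There is, however, a soft spot in the step you rightly flag as the crux. Your description of the $\lambda$-argument --- ``using Lemma~\ref{limpar} to contract the $(P_u\cap M)$-directions and the $Q$-equivariance of $\pi_D$ together with Lemma~\ref{fibers_for_QU} to dispose of the $\qq_u$-part, one transports $\xi x$ into the closure of the $P_u$-orbit by letting $t\to0$'' --- does not obviously close. For $\xi\in\gg_{-\alpha}$ with $\alpha\in\Delta(\mm\cap\pp_u)$ and $\lambda\in C_M^\circ$, the flow $\lambda(t)\exp(s\xi)\lambda(t)^{-1}=\exp(st^{-\langle\lambda,\alpha\rangle}\xi)$ diverges as $t\to0$, so there is no limiting point to transport to, and neither Lemma~\ref{limpar} nor Lemma~\ref{fibers_for_QU} applies in the way you suggest. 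The argument you want is much simpler and needs none of these tools: since $\lambda(t)\in T\cap[M,M]\subseteq L_0$ fixes $x\in Z_0$ and acts trivially on $T_xZ_0$ but scales $\xi x$ by $t^{-\langle\lambda,\alpha\rangle}\neq1$, the $\lambda$-invariant direct sum $T_xX^\circ=\pp_ux\oplus T_xZ_0$ forces the $T_xZ_0$-component of $\xi x$ to vanish. That is a clean weight-space observation; as written, your sketch is more of a gesture at it than a proof. Note also that this weight argument in fact gives the stronger conclusion $\xi x\in\qq_ux$ (since $(\pp_u\cap\mm)x$ carries only positive $\lambda$-weights), so to reach $\mm_0x=(\pp_u\cap\mm)x$ you still need your paragraph-2 intersection with $T_xZ_1$ (which then gives $(\mm\cap\pp_u^-)x\subseteq\qq_ux\cap(\pp_u\cap\mm)x=0$); the logic works, but you should spell out that last intersection rather than say ``intersecting inside the first direct sum,'' because the two direct sums share only the $(\pp_u\cap\mm)x$ summand and the cancellation is not entirely formal.
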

\begin{proof} From the local structure theorem we have $X^\circ=P*_LZ_0\cong P_u\times Z_0$. Consider an open subset of
 $Z_0$, we can assume that it is isomorphic to $A\times C$, for some $C$ with trivial $T$-action.
 Without loss of generality we may assume that $x\in C$. Suppose that $\mm_0x \neq (\pp_u\cap \mm)x$.
  From the freeness of the $P_u$-action we get $\qq_0x\neq \pp_ux$, this implies  the existence of $\xi_x\in T^*_xX$
 such that $\xi_x|_{\pp_ux}=0$ and $\xi_x|_{\qq_0x}\neq 0$. In particular since $P_ux=P_0x$ the first equality implies that $\mu_X(\xi_x)\in \pp_0^\bot=\aa+\pp_u$.
  Consider the $L_0$-module isomorphism $T^*_xX\cong (\aa+\pp_u) \oplus T^*_xC$; from $\xi_x|_{\pp_ux}=0$ we also have
 $\xi_x\in \aa x \oplus T^*_xC$. In particular $\xi_x$ is $L_0$-invariant. This implies that $\mu_X(\xi_x)\in (\aa+\pp_u)^{L_0}\subset
 \aa+\pp_u^{T\cap [M,M]}\subset \aa+\qq_u$ (where the last inclusion is due to the fact  that $\pp_u\cap \mm$ is
 $T$-invariant but does not contain $T\cap [M,M]$-fixed vectors).
 Thus $\mu_X(\xi_x)\bot \qq_0$, contradicting the assumption $\xi_x|_{\qq_0x}\neq 0$.
\end{proof}

\section{Families of nongeneric horospheres}

In this section we shall construct  a family of nongeneric horospheres. By horospheres we call the orbits of maximal
 unipotent subgroups of $G$.
 It will be proved that  for the conormal
bundle $\mathcal N^*X$ to some  foliation of $\overline{U}$-orbits (for some maximal unipotent subgroup
  $\overline{U}$) constructed below, we have $\overline{G\mathcal N^* X}=T^{*}X$. The construction is based on  ideas
of F.Knop \cite{kn_Harish}.
Our main idea to construct a Bia\l{}ynicki-Birula cell by means of  special choice of a one-parameter subgroup that allows us to avoid using compactifications as in the cited paper
of F.Knop. It also provides a deeper study of the constructed conormal bundle. This section is independent of the previous ones,
so some notation will be slightly changed for brevity.

The crucial step is first to consider  the case of a horospherical variety. Let us recall that a variety is called horospherical
if the stabilizer of a general point contains a maximal unipotent subgroup.  The  case of general variety $X$ will be studied by reducing to this one
by horospherical contraction, whose definition and existence is stated  below in Proposition \ref{ContractionHor}.
In a horospherical variety $X$ one can find a $G$-invariant open subset isomorphic to $G/P^{-}_0\times C$, where  $C$ is supplied
with the trivial action of $G$ and $P^-_0=L_0P^-_u$.  Thus we shall construct a variety of degenerate horospheres for $X=G/P^{-}_0$
and extend it to a horospherical variety $X$ by taking  product with $C$.
Let us introduce additional notation $M:=Z_G(\aa)$ and $M_0:=[M,M]Z(L_0)$. We note that  $M$  is not related with the group introduced in Section 2
and this notation  retains to the end of the paper.

\begin{proposition}\label{horospherical} Let $X$ be  the horospherical variety $G/P^{-}_0$. Consider a Borel subgroup $\overline{B}\subset G$
with unipotent radical $\overline{U}$ such that there is an inclusion
$\overline{\bb}\supset \aa+(\pp^{-}_u\cap \mm)$ for its Lie algebra. Let  $\mathcal N^*$ be the conormal bundle to the 
orbit
$\overline{U}P^{-}_0/P^{-}_0$, then we have $\overline{G\mathcal N}=T^{*}X$.
\end{proposition}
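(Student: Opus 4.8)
The plan is to reduce the statement to a dimension count for the generic $G$-orbit through a point of $\mathcal N^*$, combined with the fact that $\mathcal N^*$ already has the "right" dimension to be a Lagrangian-type subvariety. Since $X = G/P_0^-$ is homogeneous, we have $T^*X \cong G *_{P_0^-} (\pp_0^-)^\perp$, and the total space has dimension $2\dim X = 2\dim(G/P_0^-)$. The conormal bundle $\mathcal N^*$ to the orbit $\overline{U}P_0^-/P_0^-$ in $X$ is, fiberwise over that orbit, the annihilator of the tangent space to the orbit; so $\dim \mathcal N^* = \dim X$ (it is the conormal bundle of a submanifold, hence always half-dimensional). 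Therefore $\overline{G\mathcal N}$ is irreducible of dimension at most $2\dim X = \dim T^*X$, and it suffices to produce a single point $\alpha \in \mathcal N^*$ at which $\dim(G\alpha + T_\alpha \mathcal N^*) = \dim T^*X$, or equivalently at which $\dim G\alpha \geq \dim X$ and $G\alpha$ is transverse to $\mathcal N^*$ — this forces $\overline{G\mathcal N}$ to be all of $T^*X$ since the latter is irreducible.

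**The key computation via the one-parameter subgroup.**

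First I would make the identifications concrete: at the base point $o = eP_0^- \in X$, the tangent space is $\gg/\pp_0^- \cong \pp_u$ (using $\gg = \pp_0^- \oplus \pp_u$), and the conormal space to the $\overline{U}$-orbit at $o$ is $(\overline{\uu} o)^\perp \cap (\pp_0^-)^\perp$. Using the hypothesis $\overline{\bb} \supseteq \aa + (\pp_u^- \cap \mm)$ and the Cartan-Killing form to identify $\gg \cong \gg^*$, I would show that the conormal fiber at $o$ is spanned by root vectors $e_\beta$ with $\beta$ in a suitable set — concretely those negative-$\overline{B}$ roots lying in $\pp_u$ — and in particular contains a "generic" element $\xi_0 \in \aa^{pr}$ pushed into $\pp_u$-type directions, or rather contains an element whose image under the moment map $\mu_X$ is a regular element of $\aa$ plus nilpotent pieces. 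The heart of the matter is: for $\alpha$ in the conormal fiber at $o$ with $\mu_X(\alpha)$ having regular semisimple part $\xi \in \aa^{pr}$, the orbit $G\alpha$ has dimension at least $\dim G - \dim Z_G(\xi) = \dim G - \dim M$, and I need to squeeze out the extra directions coming from the nilpotent part and from moving the base point along $G/P_0^-$. I expect to choose a one-parameter subgroup $\lambda$ (adapted to $\overline{B}$, i.e. contracting $\overline{U}$ as $t\to 0$, and compatible with $\aa$) so that the Bia\l ynicki-Birula cell of the $\lambda$-action on $T^*X$ through a generic conormal point retracts onto a piece of $\mathcal N^*$; then $\overline{G\mathcal N}$ contains the closure of a BB-cell whose dimension I can compute from the weights of $\lambda$ on $T^*X$, and I would check these weights force the cell to be half-dimensional and its $G$-saturation full-dimensional.

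**Carrying out the dimension bound.**

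Concretely, the steps in order: (1) Write $T^*X \cong G*_{P_0^-}(\aa + \pp_u)$ using $(\pp_0^-)^\perp = \aa + \pp_u$ (this is exactly the decomposition used in the last lemma of Section~4). (2) Identify $\mathcal N^*|_o$ inside $\aa+\pp_u$: it is the set of $\eta \in \aa+\pp_u$ annihilating $\overline{\uu}o$, and using $\overline{\bb}\supset \aa+(\pp^-_u\cap\mm)$ one gets that $\overline{\uu}o$ surjects onto $(\mm\cap\pp_u) + (\qq_u\text{-part})$ appropriately, so $\mathcal N^*|_o$ contains all of $\aa$ together with a half of the root spaces in $\pp_u$. (3) Pick $\alpha \in \mathcal N^*|_o$ with $\aa$-component equal to a chosen $\xi \in \aa^{pr}$; then $\mu_X(\alpha) = \xi + (\text{nilpotent in } \pp_u)$, and since $Z_G(\xi) = Z_G(\aa) = M$, the orbit $G\xi$ already has dimension $\dim G - \dim M = \dim \qq_u + \dim\qq_u^-$. (4) Show $T_\alpha(G\alpha)$ together with $T_\alpha\mathcal N^*$ spans $T_\alpha T^*X$ by a transversality argument: $T_\alpha\mathcal N^*$ contains the "vertical" conormal directions and the "horizontal" $\overline{U}$-orbit directions, while $G\alpha$ supplies the complementary horizontal directions ($\gg o$ is all of $T_oX$ since $X$ is homogeneous) plus enough vertical directions coming from $[\gg,\alpha]$ hitting the part of $\aa+\pp_u$ not in the conormal fiber — here the regularity $\xi\in\aa^{pr}$ is what guarantees $[\mm\cap\pp_u,\,\xi]$ fills the missing root spaces. (5) Conclude $\dim \overline{G\mathcal N} = 2\dim X = \dim T^*X$, and since $T^*X$ is irreducible and $\overline{G\mathcal N}$ is closed, they coincide.

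**Main obstacle.**

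The hard part will be step (4): making the transversality/spanning argument precise, i.e. verifying that $[\gg,\alpha] + T_\alpha\mathcal N^*$ exhausts the tangent space rather than just showing the dimensions are compatible. The subtlety is that $\mathcal N^*$ and $G\alpha$ might be tangent along a positive-dimensional locus, in which case the dimension count via "$\dim A + \dim B \ge \dim$ total" would fail; one must actually exhibit the splitting. I expect the cleanest route is the Bia\l ynicki-Birula argument hinted at in the section's preamble: choose $\lambda$ so that $\overline{U}o$ is exactly the attracting cell of $o$ in $X$ and the fiber directions split by $\lambda$-weight sign into the conormal part (attracting) and its complement (repelling); then $\mathcal N^*$ is visibly the attracting cell of the whole point $\alpha$ in $T^*X$ for the $\lambda$-action, $G\mathcal N^*$ contains the $G$-saturation of a full attracting cell, and irreducibility of $T^*X$ plus openness of the image of $G \times (\text{attracting cell})$ finishes the proof. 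The technical cost is checking that $\lambda$ can be chosen simultaneously adapted to $\overline{B}$, to the $\aa$-grading, and regular enough that its fixed-point and weight data give exactly the conormal decomposition; this is where the hypothesis $\overline{\bb}\supset\aa+(\pp_u^-\cap\mm)$ is used essentially.
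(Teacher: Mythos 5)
Your high-level strategy (reduce to a dimension/transversality count at a well-chosen point of $\mathcal N^*$) is in the right spirit, but you never actually close the hard step, and the route you sketch has two concrete errors that would derail it.

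First, a sign slip: the cotangent fiber at $o = eP_0^-$ is $(\pp_0^-)^\perp = \aa + \pp_u^-$, not $\aa + \pp_u$ (you need the opposite nilradical, since the Killing form pairs $\pp_u$ with $\pp_u^-$). Second and more seriously, in your step (4) you claim that regularity of $\xi\in\aa^{pr}$ guarantees $[\mm\cap\pp_u,\,\xi]$ fills the missing root spaces --- but $\mm = Z_G(\aa)$, so $[\mm,\,\xi]=0$ for every $\xi\in\aa$; that bracket contributes nothing. What does the work is $[\pp_u^-,\xi]$: for generic $\xi\in\aa$ one has the decomposition $\pp_u^- = (\pp_u^-\cap\mm)\oplus[\pp_u^-,\xi]$, which is exactly what regularity buys.

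With that correction, the paper's argument is much shorter than a transversality count on $T_\alpha T^*X$, and sidesteps it entirely. It computes the conormal fiber at $o$ as $\overline{\bb}\cap(\aa+\pp_u^-)\supset \aa+(\pp_u^-\cap\mm)$, then observes that $P_u^-$ stabilizes $o$ and hence acts on the single cotangent fiber $T_o^*X\cong\aa+\pp_u^-$, and shows (Lemma~\ref{orbit_in_IM_phi}) that $\overline{P_u^-\,(\aa+(\pp_u^-\cap\mm))}=\aa+\pp_u^-$ by the differential computation just indicated. Thus $\overline{P_u^-\,\mathcal N_o^*}=T_o^*X$, and applying $G$ gives $\overline{G\mathcal N^*}=G(T_o^*X)=T^*X$. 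The whole saturation happens \emph{inside the fiber over $o$}, so there is no need to compare $T_\alpha(G\alpha)$ against $T_\alpha\mathcal N^*$ in the ambient $2\dim X$-dimensional space. Finally, the Bia\l{}ynicki--Birula route you propose as the ``cleanest fix'' is in fact the machinery the paper reserves for the general (non-horospherical) Theorem~\ref{degenerate_horospheres}, which then degenerates \emph{back} to the present horospherical case --- so invoking it here would be circular in the paper's scheme of proof, whereas the direct $P_u^-$-saturation argument above is elementary and self-contained.
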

\begin{proof}
Let us remind that the cotangent bundle $T^{*}X$ is identified with $G*_{P^{-}_0}\pp^{- \bot}_0\cong G*_{P^{-}_0}(\aa+\pp_u^-)$.
The fiber of conormal bundle to the orbit $\overline{U}P^{-}_0/P^{-}_0$  in the point $eP^{-}_0$
 is identified with $(\overline{\uu}+\pp_0^-)^\bot=\overline{\uu}^\bot\cap \pp_0^{-\bot}=
 \overline{\bb}\cap (\aa+\pp_u^-)\supset \aa+(\pp^-_u\cap \mm)$.

 We need the following lemma.
\begin{lemma}\label{orbit_in_IM_phi} Let $P$ be some parabolic subgroup of $G$ 
 and $L$ be a Levi subgroup of $P$. Then for  a subalgebra $\aa\subseteq \zz(\ll)$ we have an equality $\overline{P_u(\aa+(\pp_u\cap \mm))}=\aa+\pp_u$.
\end{lemma}
\begin{proof} We shall prove that the  map $P_u\times(\aa+(\pp_u\cap \mm))\rightarrow \aa+\pp_u$ is dominant if we prove that its differential is surjective at the point $(e,\xi)$, 
for general $\xi \in \aa$.  Calculating the differential  in $(e,\xi)$  and using the equality $\pp_u=(\pp_u\cap\zz_\gg(\xi)) \oplus [\pp_u,\xi]$ for any $\xi\in \aa$ and
the equality $\pp_u\cap \zz_\gg(\xi) =\pp_u\cap \mm$ which holds for general $\xi \in \aa$,  we obtain
that the differential:
$$\pp_u\times (\aa+(\pp_u\cap \mm))\rightarrow [\pp_u,\xi]+\aa+(\pp_u\cap \mm)=\aa+\pp_u$$
is indeed surjective.
\end{proof}

The group $P_u^-$ is acting on the fiber of $T^*X$ over  $x_0=eP^{-}_0$
since it lies  in stabilizer of this point. From  the preceding lemma
we obtain that $$\overline{P^-_u\mathcal N^*_{x_0}}\supset \overline{P^-_u(\aa+(\pp^{-}_u\cap \mm))}=\aa+\pp^-_u=T^*_{x_0}X.$$
This implies that $$\overline{G\mathcal N^*}=G(T^*_{x_0}X)=T^*X.$$
\end{proof}

We shall need the following elementary lemma.

\begin{lemma}\label{submersion} Let $X/T$ and $Y/T$ be two families of equidimensional varieties over some variety $T$ with a $T$-morphism
$f:X/T\rightarrow Y/T$. Suppose  there exists  smooth points $t_0\in T$ and $x_0\in X_{t_0}$ such that the varieties $X,\ X_{t_0}$ are smooth at $x_0\in X_{t_0}$; $Y, \ Y_{t_0}$ are smooth at $f(x_0)$, the map
$f_{t_0}:X_{t_0}\rightarrow Y_{t_0}$ is a submersion at the point $x_{0}$ (i.e the map of tangent
spaces $df_{t_0}:T_{x_0}X_{t_0}\rightarrow T_{f(x_0)}Y_{t_0}$ is surjective) and the projections $X\rightarrow T$, $Y\rightarrow T$
are submersions at $x_0$ and $f_{t_0}(x_0)$. Then the morphism
$f_t:X_t\rightarrow Y_t$ is a submersion (and in particular is dominant) at a general point of $X_t$ for a sufficiently general $t$.
\end{lemma}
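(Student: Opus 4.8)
The plan is to reduce everything to a statement about generic smoothness of morphisms over a base, handled via the local criterion of flatness together with the semicontinuity of fiber dimension. First I would work in a neighborhood of the distinguished point $x_0$. Since the projections $p_X:X\to T$ and $p_Y:Y\to T$ are submersions at $x_0$ and $f_{t_0}(x_0)$ respectively, and $X$, $Y$ are smooth there, I may after shrinking $T$ assume that $p_X$ and $p_Y$ are smooth morphisms (flat with smooth fibers) near these points; in particular $X_t$ and $Y_t$ are smooth at the relevant points for $t$ in a neighborhood of $t_0$, and all the fibers involved are equidimensional of the expected dimension by hypothesis. The substance of the lemma is then: the set of $t\in T$ for which $f_t:X_t\to Y_t$ fails to be a submersion at every point in some fixed component of $X_t$ is contained in a proper closed subset; equivalently, the locus in $X$ where the relative differential of $f$ (relative to $T$) drops rank does not contain the generic point of $X_t$ for generic $t$.

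The key steps, in order. (1) Replace $f$ by its graph or simply consider the relative cotangent/tangent sheaves: the condition ``$df_t$ surjective at a point $x$'' is the condition that the map of coherent sheaves $f_t^*\Omega^1_{Y_t/k}\to \Omega^1_{X_t/k}$ is injective on fibers at $x$ — but it is cleaner to phrase it on tangent spaces, via the relative tangent sheaf $T_{X/T}$, which is locally free near $x_0$ by smoothness of $p_X$. So I consider the morphism of vector bundles $df:T_{X/T}\to f^*T_{Y/T}$ over a neighborhood of $x_0$ in $X$. (2) The locus $W\subset X$ where this bundle map is not surjective is closed; by hypothesis $x_0\notin W$, indeed the fiber $df_{t_0}$ at $x_0$ is surjective, so $W$ is a proper closed subset of $X$ missing $x_0$. (3) Now apply the following elementary fact about families: for a closed subset $W\subsetneq X$ with $X\to T$ having equidimensional fibers and $W$ not containing the generic point of the fiber $X_{t_0}$, the image $p_X(W)$ is a constructible set, and $W\cap X_t$ is a proper closed subset of $X_t$ for all $t$ in a nonempty open subset of $T$ — this is just Chevalley's theorem on constructibility together with the fact that if $W$ contained $X_t$ for $t$ ranging over a dense subset of $T$ then by irreducibility $W$ would contain the generic fiber and hence $X_{t_0}$. (4) For such $t$, and for a general point $x\in X_t$ (general meaning outside $W\cap X_t$), the bundle map $df_t$ is surjective at $x$, i.e. $f_t$ is a submersion at $x$; in particular, since $X_t$ and $Y_t$ are smooth and equidimensional, $f_t$ is dominant. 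Finally, replacing $T$ by the intersection of the finitely many nonempty opens produced above (the smoothness locus of $p_X$, of $p_Y$, and the locus from step (3)) gives a nonempty open over which the conclusion holds, which is what ``sufficiently general $t$'' means.

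The main obstacle is step (3): one must be careful that ``$f_{t_0}$ is a submersion at the single point $x_0$'' genuinely forces the rank-drop locus $W$ to avoid the generic point of the component of $X_{t_0}$ containing $x_0$, and then that $X$ is irreducible along that component so the spreading-out argument applies; if $X$ or its fibers have several components one restricts attention throughout to the irreducible component containing $x_0$, which is legitimate since the statement is about a general point of $X_t$ and the component structure is locally constant over a neighborhood of $t_0$ by flatness. Everything else is standard generic-smoothness bookkeeping, using only that we are in characteristic zero is not even needed here — the rank-of-a-bundle-map argument is characteristic-free — but the ambient hypothesis of characteristic zero is available if one prefers to invoke generic smoothness of $f_{t_0}$ directly. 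Once $W$ is known to be a proper closed subset of $X$ not dominating $T$ in the fiberwise sense, the conclusion is immediate.
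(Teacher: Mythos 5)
The paper states Lemma~\ref{submersion} as ``the following elementary lemma'' and gives no proof at all, so there is no argument in the paper to compare yours against; I can only judge the proposal on its own merits, and on those merits it is correct. The essential observations are exactly the ones you make: after shrinking to a neighbourhood of $x_0$ where $p_X$ and $p_Y$ are smooth, the relative differential $df\colon T_{X/T}\to f^*T_{Y/T}$ is a morphism of vector bundles on $X$ whose restriction to each fibre $X_t$ is $df_t$; the non-surjectivity locus $W$ is closed and misses $x_0$; and then one spreads out to conclude that for $t$ in a nonempty open subset of $T$ the set $W\cap X_t$ is a proper closed subset of the component of $X_t$ under consideration.

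Two small remarks, neither of which affects correctness. First, your step~(3) routes through Chevalley's theorem (constructibility of $p_X(W)$, then a density argument); this works, but the cleaner observation is simply that $p_X$, being smooth near $x_0$, is flat and hence open, so $p_X(X\setminus W)$ is already an open subset of $T$ containing $t_0$, and that \emph{is} the set of good~$t$. Second, your closing aside suggesting one could ``invoke generic smoothness of $f_{t_0}$ directly'' in characteristic zero is not quite apt: the hypothesis is surjectivity of $df_{t_0}$ at the single point $x_0$, not dominance of $f_{t_0}$, so the rank-of-a-bundle-map route you actually take is the appropriate one regardless of characteristic. The care you take with irreducible components in the last paragraph is needed and correctly handled, and the dominance conclusion follows from the submersion conclusion in the standard way once one restricts to the relevant components of $X_t$ and $Y_t$.
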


Before stating one of the main theorems of this section we recall the following proposition about adjoint orbits. Let us temporally change our notations
 only for this proposition.
\begin{proposition}\label{nilp_orbits}\cite[\S 5.1, 5.5]{ColMC} Consider an arbitrary parabolic subgroup $P$ in $G$, a Levi subgroup $L$ and the unipotent radical $P_u$.
Let $\mathcal O_{\ll}$ be a nilpotent adjoint orbit of $L$ in $\ll$. Let $x\in \zz(\ll)$ be an arbitrary element of the center of $\ll$.
There exists a unique $G$-orbit $\mathcal O_{\gg}$ meeting $x+\mathcal O_{\ll}+\pp_u$ in a dense open subset. The intersection
$\mathcal O_{\gg}\cap (x+\mathcal O_{\ll}+\pp_u)$ is a single $P$-orbit. The  following equality  holds $\codim_{\gg} \mathcal O_{\gg}=
 \codim_{\ll} \mathcal O_{\ll}$. For a general point $z\in x+\mathcal O_{\ll}+\pp_u$  the stabilizer of $z$ in $\pp^-_u$ is trivial and $[\pp^-_u,z]$
is transversal to $[\ll, z]+\pp_u$. We also have the equality for irreducible components $(P_z)^0=(G_z)^0$.
\end{proposition}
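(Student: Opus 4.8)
The plan is to deduce this from the Lusztig--Spaltenstein theory of induced nilpotent orbits (\cite{ColMC}), reducing the general case to the nilpotent one by passing to the centraliser of $x$. Since $x$ is semisimple, $\mm:=\zz_{\gg}(x)$ is reductive and contains $\ll$; moreover $\pp\cap\mm$ is a parabolic subalgebra of $\mm$ with Levi $\ll$ and nilradical $\mathfrak u:=\pp_u\cap\mm$. I would decompose $\pp_u=\mathfrak u\oplus\mathfrak n$ into $\operatorname{ad}(x)$-eigenspaces, so that $\mathfrak n=\pp_u\cap[\gg,x]$ is the part on which $\operatorname{ad}(x)$ is invertible, while $\operatorname{ad}(x)$ vanishes on $\mm\supseteq\mathfrak u$. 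Inside the reductive group $M$ one has the induced nilpotent orbit $\mathcal O_{\mm}:=\operatorname{Ind}_{\ll}^{\mm}(\mathcal O_{\ll})$, which meets $\mathcal O_{\ll}+\mathfrak u$ in a dense open subset that is a single $(P\cap M)$-orbit, and satisfies $\codim_{\mm}\mathcal O_{\mm}=\codim_{\ll}\mathcal O_{\ll}$ (\cite{ColMC}). Choosing $e\in\mathcal O_{\mm}\cap(\mathcal O_{\ll}+\mathfrak u)$ and setting $z:=x+e$, the element $z$ has semisimple part $x$ and nilpotent part $e$ (as $x\in\zz(\mm)$ commutes with $e$), hence $M\cdot z=x+\mathcal O_{\mm}$ and $G_z=M_e$; I would put $\mathcal O_{\gg}:=G\cdot z$, which does not depend on $e$.

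Next I would check that $\mathcal O_{\gg}$ is the orbit of the statement. Put $\mathcal S:=x+\mathcal O_{\ll}+\pp_u$; it is $P$-stable ($L$ fixes $x$ and preserves $\mathcal O_{\ll}$ and $\pp_u$, while $[\pp_u,x],[\pp_u,\mathcal O_{\ll}],[\pp_u,\pp_u]\subseteq\pp_u$), it is isomorphic to $\mathcal O_{\ll}\times\pp_u$ (since $\ll\cap\pp_u=0$), hence irreducible and smooth of dimension $\dim\mathcal O_{\ll}+\dim\pp_u$, and it contains $z$. The key point is that $P\cdot z$ is dense open in $\mathcal S$: from $[\pp,x]=[\pp_u,x]=\operatorname{ad}(x)\pp_u=\mathfrak n$, the $\operatorname{ad}(\pp\cap\mm)$-stability of $\mathfrak n$, the relation $\mathfrak n\cap\mm=0$, and the fact that $[\pp\cap\mm,e]$ is the tangent space at $e$ of the dense $(P\cap M)$-orbit in $\mathcal O_{\ll}+\mathfrak u$, one gets $[\pp,z]=\mathfrak n\oplus[\pp\cap\mm,e]$, of dimension $\dim\mathfrak n+\dim\mathcal O_{\ll}+\dim\mathfrak u=\dim\mathcal S$. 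Consequently $\mathcal O_{\gg}\supseteq P\cdot z$ is dense open in $\mathcal S$ and $\mathcal S\subseteq\overline{\mathcal O_{\gg}}$, so $\mathcal O_{\gg}$ is the unique $G$-orbit meeting $\mathcal S$ in a dense subset, and $\codim_{\gg}\mathcal O_{\gg}=\dim G_z=\dim M_e=\codim_{\mm}\mathcal O_{\mm}=\codim_{\ll}\mathcal O_{\ll}$.

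For the remaining assertions I would examine a general point $z=x+e'+n\in\mathcal O_{\gg}\cap\mathcal S$, with $e'\in\mathcal O_{\ll}$ and $n\in\pp_u$. Since $z\in\mathcal O_{\gg}$ one has $\dim\zz_{\gg}(z)=\dim\ll-\dim\mathcal O_{\ll}$, and the density of $P\cdot z$ gives $\dim\zz_{\pp}(z)=\dim\pp-\dim\mathcal S=\dim\ll-\dim\mathcal O_{\ll}$; as $\zz_{\pp}(z)\subseteq\zz_{\gg}(z)$, these stabilisers have the same Lie algebra, whence $(P_z)^{0}=(G_z)^{0}$. Using $[\pp_u,z]\subseteq\pp_u$, $[\ll,z]=[\ll,e']+[\ll,n]$ and $[\ll,n]\subseteq\pp_u$, one writes $[\gg,z]=[\pp_u^-,z]+([\ll,z]+\pp_u)$, where the second summand equals $[\ll,e']\oplus\pp_u$, of dimension $\dim\mathcal O_{\ll}+\dim\pp_u$; since $\dim[\pp_u^-,z]\le\dim\pp_u$ and $\dim[\gg,z]=\dim\gg-\dim\zz_{\gg}(z)=2\dim\pp_u+\dim\mathcal O_{\ll}$, both inequalities must be equalities. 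This yields simultaneously $\zz_{\pp_u^-}(z)=0$ and the direct-sum decomposition $[\gg,z]=[\pp_u^-,z]\oplus([\ll,z]+\pp_u)$, which is the asserted transversality. Finally, this transversality forces $P_u^-\cdot z$ to meet $\mathcal S$ only in $z$, and the standard birationality argument for the proper morphism $G*_P\mathcal S\to\overline{\mathcal O_{\gg}}$ (\cite{ColMC}) then upgrades this to the statement that $\mathcal O_{\gg}\cap\mathcal S$ is exactly the single $P$-orbit $P\cdot z$.

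The step with genuine content is the last one---the identification of $\mathcal O_{\gg}\cap\mathcal S$ with a single $P$-orbit, that is, the birationality of $G*_P\mathcal S\to\overline{\mathcal O_{\gg}}$; the density of $P\cdot z$ and all the dimension equalities are then routine bookkeeping with the $\operatorname{ad}(x)$-eigenspace decomposition and the Lusztig--Spaltenstein input for $M=Z_G(x)$. One technicality worth flagging is that $\mm=\zz_{\gg}(x)$ need not be a Levi subalgebra of $\gg$; but it is reductive and contains $\ll$ as a Levi subalgebra of the parabolic $\pp\cap\mm$, which is all the induction theory requires.
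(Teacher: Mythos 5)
The paper does not prove this Proposition: it is stated as a citation of \cite[\S 5.1, 5.5]{ColMC}, with no argument given, so there is no in-paper proof to compare against. Your reconstruction --- pass to the reductive centraliser $M=Z_G(x)$, use that $\pp\cap\mm$ is a parabolic of $\mm$ with Levi $\ll$, apply Lusztig--Spaltenstein induction in $M$, and bookkeep with the $\operatorname{ad}(x)$-eigenspace decomposition $\pp_u=(\pp_u\cap\mm)\oplus\mathfrak{n}$ --- is precisely the standard route to the result, and your density, codimension and transversality calculations are correct.

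Two points in your closing step should be tightened. First, $\mathcal S=x+\mathcal O_{\ll}+\pp_u$ is not closed in $\pp$ unless $\mathcal O_{\ll}=0$, so $G*_P\mathcal S\to\overline{\mathcal O_{\gg}}$ is not proper; the proper morphism you want is $G*_P(x+\overline{\mathcal O_{\ll}}+\pp_u)\to\overline{\mathcal O_{\gg}}$. Second, and more substantively, ``the intersection is a single $P$-orbit'' is not the same as birationality of that proper map: its generic degree is $[G_z:P_z]$, which the statement permits to be $>1$ (only $(G_z)^0=(P_z)^0$ is asserted), so you should not invoke ``birationality.'' A cleaner way to close is to reduce the orbit count to the nilpotent case in $M$ directly. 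Given $z'\in\mathcal O_{\gg}\cap\mathcal S$, the parabolic Jordan decomposition in $\pp$ shows the semisimple part $z'_s$ is $P_u$-conjugate to $x$; after translating by $P_u$ (which preserves $\mathcal S$) one may assume $z'=x+f$ with $f$ nilpotent in $\mm$, and then $f\in(\mathcal O_{\ll}+\pp_u)\cap\mm=\mathcal O_{\ll}+(\pp_u\cap\mm)$. Since $z'$ and $z=x+e$ are $G$-conjugate with common semisimple part $x$, the nilpotent parts $f$ and $e$ are $M$-conjugate, and the nilpotent Lusztig--Spaltenstein theorem inside $M$ places $f$ in the single $(P\cap M)$-orbit $(P\cap M)\cdot e$; hence $z'\in P\cdot z$. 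This replaces the transversality-plus-birationality step and uses only the input you already set up.
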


Now we are ready to deal with the case when  $X$ is an arbitrary $G$-variety.
We are going to construct a family of $\overline{U}$-orbits for some maximal unipotent subgroup $\overline{U}$,
such that the $G$-translate of  the conormal bundle to this foliation is dense in $T^*X$.

\begin{theorem}\label{degenerate_horospheres} Let $X$ be a smooth  $G$-variety. Consider the open subset $X^\circ\cong P*_LZ$ obtained by application of the Local Structure
Theorem \ref{lst}
to some effective $B$-divisor with  stabilizer equal to the parabolic
subgroup $P:=P(X)$. Then there exists a maximal unipotent subgroup $\overline{U}$ with the following properties.
\begin{itemize}
\item[](i) For any $z\in Z$ we have $\overline{U}z=(\overline{U}\cap U) z$.
\item[](ii) Let $\mathcal N^*X$ be the conormal bundle to the foliation  of orbits $\overline{U}z$ for
$z\in Z$. Then we have $\overline{G\mathcal N^*X}=T^{*}X$.
\end{itemize}
\end{theorem}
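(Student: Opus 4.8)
The plan is to reduce the general case to the horospherical case already handled in Proposition \ref{horospherical}, via a degeneration (``horospherical contraction'') of $X$ to a horospherical variety $X_0$, and then to transport the denseness statement $\overline{G\mathcal N^*X_0}=T^*X_0$ back to $X$ using the semicontinuity Lemma \ref{submersion}. First I would invoke the existence of the horospherical contraction (Proposition \ref{ContractionHor}, quoted as forthcoming in the text): a $\KK^*$-equivariant family $\mathcal X\to \KK$ whose generic fiber is $X$ and whose special fiber $X_0$ is horospherical, such that the open subset $X^\circ\cong P*_LZ$ and the section $Z$ deform compatibly; in particular the $L/L_0\cong A$-action on $Z$ and the trivial-$G$ piece $C$ survive in the limit, and the special fiber contains an open $G$-subset $G/P_0^-\times C$. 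I would then choose the maximal unipotent subgroup $\overline U$ exactly as in Proposition \ref{horospherical}: take a Borel $\overline B\subset G$ with $\overline{\bb}\supset \aa+(\pp_u^-\cap\mm)$, where $M=Z_G(\aa)$ and $M_0=[M,M]Z(L_0)$, and set $\overline U$ to be its unipotent radical. Part (i), that $\overline U z=(\overline U\cap U)z$ for $z\in Z$, is essentially formal: the stabilizer $G_z$ for $z$ in the open $A$-stratum contains $L_0$ and, because $P^-_0=L_0P^-_u$ fixes the corresponding point of $G/P^-_0$, the computation $\overline{\uu}\cap\gg_z\supset \overline{\uu}\cap\pp_0^-$ together with $\overline{\bb}\supset\aa+(\pp_u^-\cap\mm)$ forces the $\overline U$-orbit direction at $z$ to lie in $\uu$; I would spell this out on the section $Z\cong A\times C$ where the $L_0$-action is trivial, exactly paralleling the conormal-fiber bookkeeping in the proof of Proposition \ref{horospherical}.

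For part (ii) I would argue as follows. Form the relative cotangent bundle $T^*(\mathcal X/\KK)$ and inside it the relative conormal bundle $\mathcal N^*$ to the relative foliation $\{\overline U z\}_{z\in Z}$ — this makes sense because $\overline U$ and $Z$ are defined in the whole family. Consider the $G$-action map $f:G\times \mathcal N^*\to T^*(\mathcal X/\KK)$ over $\KK$, or rather the map $G\times_{(\overline U\cap U)}\mathcal N^*\to T^*(\mathcal X/\KK)$. Over the special point $0\in\KK$ this is the map whose dominance is exactly the content of Proposition \ref{horospherical} applied to $G/P_0^-$ (and then $\times C$), so $f_0$ is a submersion at a suitable point $(g_0,\alpha_0)$ — concretely at a point where $\alpha_0\in\aa+(\pp_u^-\cap\mm)$ is generic and $P_u^-$ sweeps it out to all of $\aa+\pp_u^-=T^*_{x_0}X_0$, using Lemma \ref{orbit_in_IM_phi}. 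The source and target are equidimensional over $\KK$ of the expected dimensions (the relative dimension of $T^*(\mathcal X/\KK)$ is $2\dim X$, and $\dim G\times_{(\overline U\cap U)}\mathcal N^* $ matches because $\dim\overline U=\dim U$ by (i) and conormal bundles have the complementary rank), and smoothness of all relevant varieties and fibers at the chosen point is automatic since $X$ is smooth and the family can be taken smooth over a neighborhood of $0$. Now Lemma \ref{submersion} applies verbatim and yields that $f_t$ is dominant for general $t\in\KK$; since the generic fiber is $X$ and the whole construction is $\KK^*$-equivariant, ``general $t$'' means ``$t=1$'', i.e. $\overline{G\mathcal N^*X}=T^*X$, which is (ii).

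The main obstacle I anticipate is not the semicontinuity step — that is a clean application of Lemma \ref{submersion} — but rather verifying that the horospherical contraction genuinely carries along all the structure I want: that the open set $X^\circ$, the section $Z$ with its $A$-action, the subgroup $\overline U$ (which is intrinsic, depending only on $\aa$ and a choice of $\overline B$, hence constant in the family), and the relative foliation $\{\overline U z\}$ all extend flatly over $\KK$ with the special fiber being the foliation studied in Proposition \ref{horospherical}. This requires a careful reading of Proposition \ref{ContractionHor} and a dimension count ensuring $\mathcal N^*$ is flat over $\KK$ (no jumping of orbit dimensions of $\overline U$ on $Z$, which is guaranteed by (i) holding in the whole family, itself a closed condition one checks is satisfied generically and hence, after shrinking, everywhere). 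A secondary subtlety is the passage from the action map on $G\times\mathcal N^*$ to the quotient by $\overline U\cap U$ so that source and target have matching relative dimension; alternatively one avoids quotients and simply checks the differential of $G\times\mathcal N^*\to T^*(\mathcal X/\KK)$ is surjective at the special point — its image already contains $P_u^-(\aa+(\pp_u^-\cap\mm))$, dense in $\aa+\pp_u^-$ by Lemma \ref{orbit_in_IM_phi}, whence surjectivity — and then invokes Lemma \ref{submersion} in the ``dominant'' formulation, which is how I would actually write it.
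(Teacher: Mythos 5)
Your argument for part (ii) matches the paper's first argument essentially verbatim: degenerate to the horospherical fiber $X_0\cong G/P_0^-\times C$ via Proposition \ref{ContractionHor}, apply Proposition \ref{horospherical} there, set up the relative conormal bundle $\mathcal N^*\MX$ inside $T^*\MX/\tau^*(T^*\Bbb A^1)$, and propagate dominance of $G\times\mathcal N^*X_0\to T^*X_0$ to a general fiber by Lemma \ref{submersion}. That part is sound and is exactly how the paper proceeds.

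The genuine gap is in your treatment of part (i), which you dismiss as ``essentially formal'' and claim follows by ``conormal-fiber bookkeeping'' paralleling Proposition \ref{horospherical}. That proposition proves nothing like (i); it only computes the image of a conormal fiber under the moment map in the homogeneous case. Your sketch for (i) writes $\overline{\uu}\cap\gg_z\supset\overline{\uu}\cap\pp_0^-$, using that $P_0^-=L_0P_u^-$ fixes the base point of $G/P_0^-$ --- but this is special to the horospherical fiber. For general $X$ the local structure theorem gives only $\gg_z\supset\ll_0$, nothing like the full parabolic $\pp_0^-$, so your inclusion fails and the $\overline U$-orbit direction at $z$ is not forced into $\uu$ by any stabilizer computation of this kind. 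Moreover, you take $\overline U$ to be the unipotent radical of \emph{any} Borel $\overline B$ with $\overline{\bb}\supset\aa+(\pp_u^-\cap\mm)$; for that weak condition there is no reason (i) holds. The paper instead constructs a very specific $\overline U$ via a Bia\l{}ynicki-Birula cell: it picks a one-parameter subgroup $\lambda\in Z_{M_0}(L_0)$ with $\langle\lambda,\gamma\rangle<0$ on $\Delta(\pp_u\cap\mm)$, forms $\overline M=Z_G(\lambda)$, $\overline Q$ and $\overline Q_u$, proves via the limit map $\varphi(x)=\lim_{t\to 0}\lambda(t)x$ that $\overline Q_uz=(P_u\cap\overline Q_u)z$ (so $\overline Q_u$-orbits of points of $Z$ stay inside $X^\circ=P_uZ$), and only then sets $\overline U=(U\cap\overline M)\ltimes\overline Q_u$. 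That is the content of (i), and it is substantive, not formal.

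This gap also undermines the mechanics of your part (ii): you need (i) to know that the orbits $\overline Uz$ ($z\in Z$) are pairwise disjoint (they sit inside the distinct fibers $P_uz$ of $X^\circ\cong P_u\times Z$), hence actually form a foliation whose conormal bundle has the expected rank, and to identify $\mathcal N^*\MX$ as a flat family. Your remark that (i) is ``an open condition checked generically'' does not by itself produce a subgroup $\overline U$ for which the condition is nonempty on the deformed fiber; the Bia\l{}ynicki-Birula argument is precisely what makes that work. So the proposal is correct in spirit for (ii) but has no proof of (i), and (ii) as written quietly relies on (i).
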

\begin{proof} To construct the desired family we proceed in several steps.

{\bf Step 1.} Our aim is to construct a Bia\l{}ynicki-Birula cell with respect to a one parameter subgroup $\lambda(t)\subset Z(L_0)$.
 We shall choose $\lambda$ in a special way. 
Let us recall that  $ \pp\cap \mm_0$ is a parabolic subalgebra of $\mm_0$ with a Levi subgroup $\ll_0$ and  the unipotent radical $\pp_u\cap \mm$.

\vspace{1ex} {\it
Let us take a one-parameter subgroup $\lambda:\Bbb K^\times \rightarrow T$ such that $\lambda(t)\in Z_{M_0}(L_0)$
 and $\langle \lambda, \gamma \rangle<0$ for all $\gamma \in \Delta_{(\pp_u\cap \mm)}$.}
 \vspace{1ex}

Let us introduce the following groups:
First consider $\overline{M}:=Z_G(\lambda)$, with  root system $\Delta_{\overline{M}}=\{\gamma \in \Delta| \langle \gamma;\lambda \rangle=0\}$.
It is a Levi subgroup of
$$\overline{Q}:=\{g\in G|  \ \text{there exists a} \ \lim_{t\rightarrow 0} \lambda(t) g \lambda(t)^{-1} \ \text{in} \  G\},$$
with  Lie algebra
 $$\overline{\qq}=\tm \oplus \bigoplus_{\langle \alpha, \lambda \rangle\geqslant 0}\gg_\alpha. $$
The unipotent  radical of $\overline{Q}$  and the corresponding Lie algebra can be expressed by the formulae:
$$ \overline{Q}_{u}=\{g\in G|   \ \lim_{t\rightarrow 0} \lambda(t) g \lambda(t)^{-1}=e\}
 \ \ \ \ \ \overline{\qq}_u= \bigoplus_{\langle \alpha, \lambda \rangle> 0}\gg_\alpha. \  $$
In particular we have following the obvious inclusions $\overline{M}\supset L$ and $\overline{\qq}_u\supset (\pp^{-}_u\cap \mm)$.

Let us fix an open $P$-invariant subset $X^\circ=P*_LZ$  of $X$ constructed in the local structure theorem \ref{lst} applied
to some effective $B$-divisor with stabilizer equal to the parabolic
subgroup $P:=P(X)$. We recall that $L_0$  acts trivially on $Z$.
Consider the following open subset of Bia\l{}ynicki-Birula cell:
$$Z_\lambda:=\{x\in X|  \  \ \lim_{t\rightarrow 0} \lambda(t) x\in Z\},$$
It is well defined since $\lambda(t)$ fixes the points of $Z$.
Let us define a map $\varphi$  by the formula:
$$\varphi: Z_\lambda \rightarrow Z \ \ \ \varphi(x)=\lim_{t\rightarrow 0} \lambda(t)x.$$

Let us show that $ Z_\lambda\subset X^\circ$. Indeed if $\lim_{t\rightarrow 0} \lambda(t) x=z\in Z$, then the orbit  of one-parameter subgroup $\lambda$ of
the point $x$ intersects
$X^\circ$ which is an open neighborhood of $z$. Thus $X^\circ$ being $\lambda$-invariant contains the whole $\lambda(t)x$.

\begin{lemma}\label{P_u_phi} For $x\in  Z_\lambda$, $q\in \overline{Q}_{u}$ and $m\in \overline{M}$
we have the following equalities $\varphi(qx)=\varphi(x)$ and  $\varphi( m x)=m\varphi(x)$.
\end{lemma}
\begin{proof} Indeed for $q\in \overline{Q}_{u}$ by definition $\lim_{t\rightarrow 0} \lambda(t) q \lambda(t)^{-1}=e$.
Thus  we get:
$$\varphi(qx)=\lim_{t\rightarrow 0} \lambda(t) q \lambda(t)^{-1}\cdot \lim_{t\rightarrow 0}  \lambda(t) x=\lim_{t\rightarrow 0} \lambda(t) x=\varphi(x),$$
$$\varphi( mx)=\lim_{t\rightarrow 0} \lambda(t) m x=m\lim_{t\rightarrow 0} \lambda(t) x=\varphi(x)$$
\end{proof}

 \begin{proposition}\label{varphi_cap open} 
   For $z\in Z$ we have  $\varphi^{-1}(z)=(P_u\cap \overline{Q}_{u}) z$ and
  $ Z_\lambda=(P_u\cap \overline{Q}_{u})Z.$
\end{proposition}
\begin{proof} As was noticed before $Z_\lambda\subset X^\circ$.
Let us write down the action of $\lambda$ on the point $x\in X^\circ$,
that we write in the form $x=p_uz$, for $p_u\in P_u$ and $z\in Z$. Since
the action of $\lambda$ is trivial  on $Z$ we get:
$$\lim_{t\rightarrow 0}  \lambda(t)(p_uz)=\lim_{t\rightarrow 0} \lambda(t) p_u\lambda(t)^{-1}z $$
    Taking into account that $\lambda(t) p_u\lambda(t)^{-1}\in P_u$ and the fact that the action of $P_u$
  is free on $X^\circ$  we get that
 $\lim_{t\rightarrow 0} \lambda(t) p_u\lambda(t)^{-1}z $
 exists iff there exists the  $\lim_{t\rightarrow 0} \lambda(t) p_u\lambda(t)^{-1}$.
This implies that $p_u\in \overline{Q}$.

Let us  prove that $p_u\in \overline{Q}_u$. Using the Levi decomposition $\overline{Q}=\overline{M} \ltimes\overline{Q}_{u}$
 we get the decomposition $P_u\cap \overline{Q}=P_u\cap \overline{M}\ltimes P_u\cap \overline{Q}_{u}$, and in particular  $p_u=mq_u$ for $m\in P_u\cap \overline{M}$
and $q_u\in P_u\cap \overline{Q}_{u}$. Hence we obtain:
$$\lim_{t\rightarrow 0}  \lambda(t)(p_uz)=m\lim_{t\rightarrow 0}  \lambda(t)(q_uz)=mz.$$

Thus the inclusion $mz\in Z$ is satisfied if and  only if $m=e$,
since  $m\in P_u$, $X^\circ\cong P_u\times Z$.
This gives the desired inclusion $p_u\in \overline{Q}_{u}$ and finishes the proof.
\end{proof}

Consider the $\overline{Q}_{u}$-orbits of the points from $Z$.
We shall prove that it is contained in the open subset $X^\circ$.

\begin{lemma}\label{P_u_orbits} For $z\in Z$ we have $(P_u\cap \overline{Q}_{u})z=\overline{Q}_{u}z$.
\end{lemma}
\begin{proof} By Lemma \ref{P_u_phi} and  Proposition \ref{varphi_cap open} we get
 $\overline{Q}_{u}z\subseteq \varphi^{-1}(z)=(P_u\cap \overline{Q}_{u})z.$ That implies our lemma.
\end{proof}

{\bf Step 2.} Let us define the group $\overline{U}$. Consider the group $\overline{U}_{\overline{M}}=U\cap \overline{M}$.
 Let us define the required group as
 $$
 \overline{U}=\overline{U}_{\overline{M}}\ltimes \overline{Q}_{u}\subset \overline{Q}.
 $$
 It is a maximal unipotent subgroup of $G$ being the preimage of a maximal unipotent
 subgroup in $\overline{M}$ under the morphism $\overline{Q}\rightarrow
 \overline{Q}/\overline{Q}_u\cong \overline{M}$.

 Consider the family  of orbits $\overline{U}z$ for $z\in Z$.
From \ref{P_u_orbits} we obtain that $\overline{U}z=\overline{U}_{\overline{M}}(\overline{Q}_{u}z)=\overline{U}_{\overline{M}}(\overline{Q}_{u}\cap P_u)z\subset P_uz$.
This implies in particular that the orbits $\overline{U}z$ are contained in $X^\circ$ and that $\overline{U}z_1\neq \overline{U}z_2$ for $z_1\neq z_2$
(since $P_uz_1\cap P_uz_2=\emptyset$).

\begin{lemma} The  orbit $\overline{U}z$ for $z\in Z$  is stable under  the group \
 $$\overline{S}:=(L_0\ltimes(\overline{M}\cap P_u))\ltimes \overline{Q}_{u}\subset \overline{Q}.$$
\end{lemma}
\begin{proof}
  We need  to prove that $L_0$ normalizes $\overline{U}z$.
 But this follows  from
 $$L_0\overline{U}z=L_0(\overline{M}\cap P_u) \overline{Q}_{u}z=(\overline{M}\cap P_u) \overline{Q}_{u}L_0z=\overline{U}z,$$
 where we have used that $L_0\subset G_z$.
\end{proof}

Now we are ready to prove part $(ii)$ of the theorem.
We shall give two arguments one is based on degeneration to horospherical variety the other is based on the calculation of the
 image of the moment map\footnote{It is generalization of F.Knop's proof \cite{asymp}
for nondegenerate varieties}. Let us recall the definition of conormal bundle to the foliation of $\overline{U}$-orbits:
$$
\mathcal N^*X=\{(x,\xi) \in T^*X| \ x\in \overline{U}Z, \ \ \  \langle \overline{\uu}x,\xi \rangle=0\}
$$

\vspace{1ex}

{\bf Argument 1:} By \cite{Popov_contr}  (see also \cite{weylgr}) we know that every $G$-variety
 $X$ admits a degeneration to a horospherical variety.

 \begin{proposition} \label{ContractionHor} For a $G$-variety  $X$ there exists a $G\times \Bbb K^\times$-variety $\mathcal{X}$
and a surjective $G$-invariant morphism $\tau:\mathcal {X}\rightarrow \Bbb A^1$ that is
equivariant  with respect to the action $\Bbb K^\times:\Bbb A^1$,  such that
\begin{itemize}
\item[](i) For $t\neq 0$ the fiber ${X}_t:=\tau^{-1}(t)$ is isomorphic to $X$. The fiber
${X}_0$ is a smooth horospherical variety, shrinking  ${X}$
we may assume that ${X}_0\cong G/P^{-}_0\times C$ (for some variety $C$).

\item[](ii) The morphism $\tau$ is equidimensional and flat. By shrinking $X$ and $\mathcal{X}$, we can assume that $\mathcal{X}$ and $\tau$ are smooth.

\item[](iii) For the fibers of $\tau$ we have $P({X}_t)=P$, the group $L_0$ is independent of $t$ and in particular $\aa_{{X}_t}=\aa$.
\end{itemize}

\end{proposition}

Let us choose a $B$-invariant divisor $D\subset X$
with  stabilizer $P:=P(X)$.
We  extend this divisor to a $B\times \Bbb K^\times$--divisor $\MD$ on the $G\times \Bbb K^\times$--variety
$\MX$ in the following way.
  Using the isomorphism $\MX \setminus X_0\cong X\times \Bbb K^\times$
we  extend $D$ to  a $B\times \Bbb K^\times$--divisor on $\MX\setminus {X}_0$.
 We are finished by setting  $\MD$ to be the closure of this  divisor.

Having  constructed the $B\times \Bbb K^\times$--divisor $\MD$,
  we see, that  the $P[D]$-regularity condition for the weight $\chi_D$ of the section $\sigma_D$ is the same
as the $P[\MD]$-regularity condition of the weight $\chi_{\MD}$ of the section $\sigma_{\MD}$.  So we can apply the  local structure theorem
to get the following statement.

\begin{stat}  Consider  the map $\psi_{\MD}:\MX\rightarrow \gg^*$ constructed in the local structure theorem.
 Then $\Im \psi_{\MD}=\chi_D+\pp_u$. 
 Defining $\MZ:=\psi_{\MD}^{-1}(\chi_D)$, we have
 $$\MX\setminus \MD\cong P*_L \MZ. $$
 Let $X$ be an affine variety and $D=(f)$ for some  $f\in \Bbb K[X]^{(B)}$ or $X$ be a projective variety and $D$  be a
 pullback of some $B$-invariant hyperplane section, then $\MZ\cong Z\times \Bbb A^1$.
\end{stat}
\begin{proof}  Let $X$ be an affine variety. We are left to prove that $\MZ\cong Z\times \Bbb A^1$.   
Let us recall  that $\MX/\!\!/U\cong X/\!\!/U\times \Bbb A^1$ (\cite[Prop. 11]{Popov_contr}). Then $\MD$ is the divisor of zeroes of $F\in \Bbb K[\MX]^{B}$,
that is a pullback of $f$ under projection $\MX/\!\!/U\rightarrow X/\!\!/U$. 
   The desired equality follows from:
 $$\Bbb K[\MZ]=\Bbb K[\MX\setminus \MD]^U=K[\MX]^U_F=K[X]^U_f\otimes \Bbb K[t]=K[Z]\otimes \Bbb K[t],$$
where $K[\MX]^U_F,K[X]^U_f$ are  the localizations with respect to $F$ and $f$ correspondingly. 

Taking an affine cone $\widehat{X}$ over $X$ we reduce a projective case  to  an affine (we recall from \cite{weylgr} that for projective $X$, $\MX$ is constructed by 
applying a standard construction of degeneration \cite{Popov_contr} to $\widehat{X}$ and taking a quotient by dilatations).
\end{proof}

The family of orbits constructed  in the second step can be extended to the whole variety $\MX$.
 Since $P(X)=P({X}_0)$ and $\aa_X=\aa_{{X}_0}$ we can
 choose the same $\lambda$ as in  Step 1 for the variety  $\MX$ and all its fibers $X_t$.
 We define a Bia\l{}ynicki-Birula cell for $\MX$:
 $${ \mathcal Z}_\lambda=\{x\in \MX|  \ \exists \ \lim_{t\rightarrow 0} \lambda(t) x\in \MZ\}.$$
 Applying  Proposition \ref{varphi_cap open} we get
$$ {\mathcal Z}_\lambda=(P_u\cap \overline{Q}_{u}) \times \MZ. $$  

Consider the conormal bundle $\mathcal N X$ to the constructed foliation of $\overline{U}$-orbits.
It fits into the following family of the conormal bundles to the foliations of $\overline{U}$-orbits in the fibers of $\tau:\MX\rightarrow \Bbb A^1$:
 $$\mathcal N^*\MX=\{(x,t,\xi) \in T^*\MX/\tau^*(T^*\Bbb A^1)| \ x\in \overline{U}\MZ_t,  \ \  \ \langle \overline{\uu}x,\xi \rangle=0\}
 $$
We note that the restriction of $T^*\MX/\tau^*(T^*\Bbb A^1)$   to $X_t$ is isomorphic to $T^*X_t$.
From  Proposition \ref{horospherical} we know that the map $G\times \mathcal N^*X_0\rightarrow T^*X_0$ is dominant.
This implies (by application of Lemma \ref{submersion}) that $G\times \mathcal N^*X_t\rightarrow T^*X_t$ is dominant for general $t$, which proves our claim.

{\bf Argument 2:}
The second proof is based on the study of the image of the conormal bundle $\mathcal N^*X$
under the moment map $\mu_X$. Since $\overline{S}$  normalizes the orbits $\overline{U}z$ (for $z\in Z$), we get
$$
\mathcal N^*X=\{(x,\xi) \in T^*X| \ x\in \overline{U}Z, \ \ \  \langle \overline{\ss} x,\xi \rangle=0\}
$$
$$\mu_X(\mathcal N^*X)\subset \overline{\ss}^\bot=\aa+(\pp_u\cap \overline{\mm}) +\overline{\qq}_u\supset \aa+(\pp^{-}_u\cap \mm).$$

\begin{remark} By the construction of $\overline{\qq}$ we have the inclusion
 $\overline{\qq}_u\supset (\pp^{-}_u\cap \mm)$ and the equality  $\aa+(\pp^{-}_u\cap \mm)=\aa+ (\pp_u\cap \overline{\mm} +\overline{\qq}_u)\cap \mm$.
\end{remark}

Let us denote $\overline{P}=N_G(\overline{S})$. We have $\overline{\pp}_u=\pp_u\cap \overline{\mm} +\overline{\qq}_u$.
The next proposition gives us information about the image  $\mu_X(\mathcal N_zX)$.

\begin{proposition}\label{image_mu} Let $\mathcal N_zX$ be the fiber of $\mathcal NX$ over $z\in Z$. Consider the $T$-equivariant  projection
of $\aa+\overline{\pp}_u$ to the subspace $ \aa+\overline{\qq}_u\cap\pp^-_u$  with the fibers  parallel to the subspace $\overline{\qq}\cap \pp_u$. Then the image of ${\mu_X(\mathcal N_zX)}$ under this projection is equal to $\aa+\overline{\qq}_u\cap\pp^-_u$.
\end{proposition}
\begin{proof} Consider the $T$-stable decomposition
$
\gg=\aa + (\overline{\qq}_u\cap\pp^-_u + \overline{\qq}^{\ -}_u\cap\pp_u)
 + \gg_0,
$ where $\gg_0$ is orthogonal to the
other direct summands. The restriction of the pairing to $\overline{\qq}_u\cap\pp^-_u + \overline{\qq}^{\ -}_u\cap\pp_u$  is non-degenerate
and the subspaces $\overline{\qq}_u\cap\pp^-_u$, $\overline{\qq}^{\ -}_u\cap\pp_u$ are isotropic.
Moreover $\overline{\qq}\cap \pp_u\subset  \gg_0$, and the elements of $\aa+(\overline{\qq}_u\cap\pp^-_u)$
are identified with the linear functions on  $\aa+(\overline{\qq}^{\ -}_u\cap\pp_u)$.
From the inclusion $$T_zX\supset \overline{\uu}z\oplus (\aa + \overline{\qq}^{\ -}_u\cap\pp_u)z
=(\overline{\qq}\cap \pp_u)z\oplus (\aa+ \overline{\qq}^{\ -}_u\cap\pp_u)z,$$
we see that any linear function $\eta$ on $\aa+(\overline{\qq}^{\ -}_u\cap\pp_u)$ can be lifted to an element $\xi\in T_z^*X$ that is zero on
$(\overline{\qq}\cap \pp_u)z=\overline{\uu}z $. We found $\xi \in \mathcal N^*_zX$ such that the projection of $\mu_X(\xi)$
to $\aa+\overline{\qq}_u\cap\pp^-_u$ is equal to $\eta$. This proves the proposition.
\end{proof}

\begin{remark}\label{image_mu_cor} Consider  some point $z\in Z$. The one-parameter subgroup $\lambda$ acts on $T_zX$
 since $z$ is fixed by $\lambda$. In the proof of Proposition  \ref{image_mu} we constructed  a subspace
  $V_z\subset \N^*_zX$ such that $\mu_X(V_z)$
maps   isomorphically to $\aa+\overline{\qq}_u\cap \pp^-_u$ under the $T$-equivariant projection.
Let us notice that the decomposition $$T_zX=(\overline{\qq}\cap \pp_u)z\oplus (\aa+ \overline{\qq}^{\ -}_u\cap\pp_u)z\oplus R$$
can be taken $\lambda$-equivariant this implies that $V_z$ can be chosen $\lambda$-invariant.
\end{remark}

\begin{proposition}\label{image_mu2}  We have the equality  $\overline{\mu_X(\mathcal N^*X)}=\aa+\overline{\pp}_u=\aa+\pp_u\cap \overline{\mm}+\overline{\qq}_u$.
\end{proposition}
\begin{proof} Since $\overline{P}$ normalizes the foliation of orbits  it also normalizes $\mathcal N^*X$. Thus to prove the proposition
it is sufficient to show that $\overline{P}\mu_X(\mathcal N_zX)$ is dense in $\overline{\ss}^\bot=\aa+\overline{\pp}_u$.
We shall use the following lemma.

\begin{lemma}\label{P_u_cap P orbits} Consider the action of $P_u\cap \overline{Q}$
 on $\aa+\overline{\pp}_u$. Let $\xi \in \aa+\overline{\pp}_u$ be a general point. Then the stabilizer
of $\xi$ in $P_u\cap \overline{Q}$ is trivial. Moreover if $\xi \in \aa^{pr}+\pp_u\cap\overline{\qq} $
 then $(P_u\cap \overline{Q})\xi=\xi+\pp_u\cap \overline{\qq}$.
\end{lemma}
\begin{proof} The first claim follows from the second one.
 Let us take $\xi\in \aa^{pr}$. We have the inclusion $(P_u\cap \overline{Q})\xi\subset \xi+\pp_u\cap \overline{\qq}$ (it follows from the equality $P_u\cap \overline{Q}=\exp(\pp_u\cap \overline{\qq})$ and the inclusion $[\xi,\pp_u\cap \overline{\qq}]\subset\pp_u\cap \overline{\qq}$ ).
From $\pp_u\cap \zz_\gg(\xi)=(\pp_u\cap \mm)\subset \overline{\qq}^{\, -}_u$ we get that the stabilizer of $\xi$ in $P_u\cap \overline{Q}$ is trivial and we have the equality $[\xi,\pp_u\cap \overline{\qq}]=\pp_u\cap \overline{\qq}$.
This implies that the tangent space in $\xi$ to the orbit  $(P_u\cap \overline{Q})\xi$ coincides with $\pp_u\cap \overline{\qq}$. Thus $(P_u\cap \overline{Q})\xi$
is dense in $\xi+\pp_u\cap \overline{\qq}$. Since any orbit of a unipotent group in an affine variety is closed we get
$(P_u\cap \overline{Q})\xi=\xi+\pp_u\cap \overline{\qq}$.
\end{proof}

By Proposition \ref{image_mu} we get that there exists   $\xi=\xi_0+
\xi_+\in \mu_X(\mathcal N^*_zX)$ where $\xi_0\in \aa^{pr}$ and $\xi_+\in \pp_u\cap \overline{\qq}$.
By Lemma \ref{P_u_cap P orbits} we get that  the stabilizer of $\xi$ in $\pp_u\cap \overline{\qq}$ is trivial
and $[\pp_u\cap\overline{\qq} ,\xi]=\pp_u\cap \overline{\qq}$. Since the projection of $\mu_X(\mathcal N_zX)$
to the subspace $\aa+\pp^-_u\cap\overline{\qq}_u$ is surjective we get that $\mu_X(\mathcal N^*_zX)+\pp_u\cap \overline{\qq}=\overline{\ss}^\bot$.
Calculating the differential  of the map $(P_u\cap \overline{Q})\times \mu_X(\mathcal N^*_zX)\rightarrow (P_u\cap \overline{Q})\mu_X(\mathcal N^*_zX)$
in the point $\xi$ we get $$(\pp_u\cap\overline{\qq} )\times \mu_X(\mathcal N^*_zX)\rightarrow [\pp_u\cap \overline{\qq},\xi]+\mu_X(\mathcal N^*_zX)
= \overline{\ss}^\bot.
$$
This implies that $(P_u\cap \overline{Q})\mu_X(\mathcal N^*_zX)$ is dense in $\overline{\ss}^\bot$ and proves the proposition.
\end{proof}
\vspace{2ex}

\begin{remark}
The Proposition \ref{image_mu2} can  also be proved in two different ways: via the construction
of a family of linear subspaces that tends to $\aa+\overline{\qq}_u\cap\pp_u^-$ or via degeneration
 to a horospherical variety.
\end{remark}

\begin{proof}[A second proof of Proposition \ref{image_mu2}]
Assume for a moment that we have constructed a one-parameter family of linear subspaces $V_t\subset \overline{\ss}^\bot$
such that  $V_t\subset{\mu_X(\mathcal N^*_{z_t}X)}$ for some $z_t\in Z$,  for any $t\neq 0$, and $V_0=\aa+\pp_u^-\cap\overline{\qq}_u$.  By Lemma \ref{orbit_in_IM_phi} 
${\overline{P}_uV_0}$ which contains ${\overline{P}_u(\aa+(\pp^{-}_u\cap \mm))}$ is dense in $\aa+\overline{\pp}_u=\overline{\ss}^\bot$. Applying
Lemma \ref{submersion} to the map $f_t:\overline{P_u}\times V_t\rightarrow \overline{\ss}^\bot$ and using  that $f_0$ is dominant, we get that $f_t$
 is dominant for almost all $t$.
The fact that ${\overline{P}_uV_t}$ is dense in $\overline{\ss}^\bot$ and the following chain of inclusions  proves the proposition:
$$
{\mu_X(\mathcal N^*X)}\supset {\overline{P}_u\mu_X(\mathcal N^*_{z_t}X)}\supset {\overline{P}_uV_t}.
$$
To construct the desired family of subspaces let us fix a strictly dominant one-parameter subgroup $\lambda_0:\Bbb K^*\rightarrow T$.
By Proposition  \ref{image_mu} the projection of ${\mu_X(\mathcal N^*_zX)}$ for $z\in Z$ to
$\aa+\pp^-_u\cap\overline{\qq}_u$ is surjective, thus we can fix a subspace $V_1\subset{\mu_X(\mathcal N^*_zX)}$
 that maps isomorphically to $\aa+\pp^-_u\cap\overline{\qq}_u$.


For the desired family let us take the closure of the family $V_t:=\lambda_0(t)V_1\subset {\mu_X(\mathcal N^*_{\lambda_0(t)z}X)}$
in the Grassmannian of $\dim V_1$ vector subspaces of $\gg$. 
By  next lemma (that is the characterization of the open Schubert cell as the Bia\l{}ynicki-Birula cell) we get
  that in this Grassmannian  we have the following limit $\langle V_0\rangle:=\lim_{t\rightarrow 0} \langle V_t\rangle=\langle\aa+\pp^-_u\cap\overline{\qq}_u\rangle$.

\begin{lemma}\cite[Prop. 8.5.1]{SPR} Let $\lambda(t)$ be a one-dimensional torus acting on a linear space $V$. Let $V_{\leqslant 0}$
be the sum of components of $V$ with nonpositive $\lambda$-weights, and $\pi_{\leqslant 0}$ be the corresponding
$\lambda$-equivariant projection. Consider the subspace  $W\subset V$ that maps isomorphically to
$V_{\leqslant 0}$ under the projection $\pi_{\leqslant 0}$. Then we have the following limit in the Grassmannian:
$$\lim_{t\rightarrow 0} \lambda(t)\langle W \rangle=\langle V_{\leqslant 0}\rangle.$$
\end{lemma}
\end{proof}

\begin{proof}[Third proof of Proposition \ref{image_mu2}] Let us include $\mathcal{N}^*X$ into the family
$\mathcal{N}^*\MX$ as we have done in Argument 1.
 We also know that $\mu_{{X}_t}(\mathcal{N}^*{X}_t)\subset \aa+\overline{\pp}_u$.
We can do explicit calculations  of the  moment map for horospherical variety ${X}_0\cong G/P_0^-\times C$.
Indeed the image $\mu_X(\mathcal{N}^*_{x_0}{X}_0)$ for $x_0= eP_0^-$ is identified with
 $$(\overline{\uu}+\pp_0^-)^\bot= \overline{\bb}\cap (\aa+\pp_u^-)=\aa+\pp^-_u\cap\overline{\qq}_u\supset \aa+(\pp^{-}_u\cap \mm)=\aa+(\overline{\pp}_u\cap \mm).$$
   The following
 inclusions for a horospherical variety ${X}_0$:
 $${\mu_X(\mathcal{N}^*{X}_0)} \supset{\overline{P}_u\mu_X(\mathcal{N}^*_{x_0}{X}_0)}
 \supset{\overline{P}_u(\aa+(\overline{\pp}_u\cap \mm))}$$
and the density of $\overline{P}_u(\aa+(\overline{\pp}_u\cap \mm))$ in $\aa+\overline{\pp}_u$ (see Lemma \ref{orbit_in_IM_phi})
imply that $\overline{\mu_X(\mathcal{N}^*{X}_0)}=\aa+\overline{\pp}_u$.

  Applying Lemma \ref{submersion} to the triple of varieties $(\mathcal{N}^*\MX, (\aa+\overline{\pp}_u)\times \Bbb A^1,\Bbb A^1)$ and the map
$\mu_{\MX}=(\mu_{{X}_t},t):\mathcal{N}^*{X}_t\rightarrow (\aa+\overline{\pp}_u)\times \Bbb A^1$ 
 and using  the equality
$\overline{\mu_X(\mathcal{N}^*X_0)}=\aa+\overline{\pp}_u$ we get that for a general $t\in \Bbb A^1$ we have $\overline{\mu_X(\mathcal{N}^*X_t)}=\aa+\overline{\pp}_u$. This proves  the proposition.
\end{proof}

  We shall be finished after proving next proposition
\begin{proposition}  The variety 
${\overline{P}_u^{\, -}\mathcal N^*X}$ is dense in $T^*X$.
\end{proposition}
\begin{proof} By considering the differential of the map
$\overline{P}_u^{\, -}\times\mathcal N^*X\rightarrow \overline{P}_u^{\, -}\mathcal N^*X$
we see that it is sufficient to prove that for some  $\alpha\in \mathcal N^*X$ 
the tangent space to $T^*X$ in the point $\alpha$ is equal to the sum of $\overline{\pp}^{\, -}_u\alpha$
and the tangent space to $\mathcal N^*X$.
Let us take $\alpha$ such that $\xi=\mu_X(\alpha)\in \aa+\overline{\pp}_u$ is a sufficiently general point.
 Proposition \ref{nilp_orbits} implies that that $d\mu_X$ maps isomorphically $\overline{\pp}^{\, -}_u\alpha$
 onto $\overline{\pp}^{\, -}_u\xi\cong \overline{\pp}^{\, -}_u$ and that $\overline{\pp}^{\, -}_u\xi$ is transversal to
$\aa+\overline{\pp}_u=\overline{\mu_X(\mathcal N^*X)}$ in the point $\xi$.
The intersection of the tangent space to  $\mu_X(\mathcal N^*X)$ in the point $\xi$ with the subspace  $d\mu_X(\overline{\pp}^{\, -}_u\alpha)$ is equal to $(\aa+\overline{\pp}_u)\cap\overline{\pp}_u^{\, -}\xi=0$.
This implies that $\overline{\pp}^{\, -}_u\alpha$ is transversal to $\mathcal N^*X$ in $\alpha$.
The  transversality of $\overline{\pp}^{\, -}_u\alpha$ and $\mathcal N^*X$ combined with the  equality
$\codim_{T^*X} \mathcal N^*X=\dim P_u=\dim \overline{P}_u$
 implies our claim.
\end{proof}
\end{proof}

\begin{corollary} The closure of the image of the moment map is equal to $\overline{\mu_X(T^*X)}=G(\aa+\overline{\pp}_u)=G(\aa+\pp^-_u)$.
\end{corollary}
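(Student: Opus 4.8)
The plan is to transfer the equality $\overline{\mu_X(\mathcal{N}^*X)}=\aa+\overline{\pp}_u$ from Proposition~\ref{image_mu2} to a statement about all of $T^*X$ by using that $\overline{P}_u^{\,-}\mathcal{N}^*X$ is dense in $T^*X$ (the last proposition above), and then applying $G$-equivariance of the moment map. First I would note that $\mu_X$ is $G$-equivariant, so $\mu_X(\overline{P}_u^{\,-}\mathcal{N}^*X)=\overline{P}_u^{\,-}\mu_X(\mathcal{N}^*X)$; since $\overline{P}_u^{\,-}\mathcal{N}^*X$ is dense in $T^*X$ and $\mu_X$ is a morphism, $\overline{\mu_X(T^*X)}=\overline{\overline{P}_u^{\,-}\mu_X(\mathcal{N}^*X)}$. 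Combining with $\overline{\mu_X(\mathcal{N}^*X)}=\aa+\overline{\pp}_u$ gives $\overline{\mu_X(T^*X)}=\overline{\overline{P}_u^{\,-}(\aa+\overline{\pp}_u)}$.

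Next I would show $\overline{P}_u^{\,-}(\aa+\overline{\pp}_u)$ is dense in $\overline{P}(\aa+\overline{\pp}_u)$, where $\overline{P}=N_G(\overline{S})$ has Levi $\overline{M}$ and unipotent radical $\overline{\pp}_u$: since $\overline{M}=Z_G(\lambda)$ normalizes $\aa+\overline{\pp}_u$ (indeed $\lambda\in Z(L_0)$ and $\overline{M}\supset L\supset L_0$, while $\overline{\pp}_u$ is $\overline{M}$-stable and $\aa$ is fixed by $L_0$), we have $\overline{P}(\aa+\overline{\pp}_u)=\overline{P}_u^{\,-}\overline{M}\overline{\pp}_u(\aa+\overline{\pp}_u)=\overline{P}_u^{\,-}(\aa+\overline{\pp}_u)$ after absorbing $\overline{\pp}_u$ into the affine subspace; more precisely the Bruhat-type open cell $\overline{P}_u^{\,-}\overline{P}$ is dense in $G$, so $\overline{P}_u^{\,-}(\aa+\overline{\pp}_u)$ is dense in $G(\aa+\overline{\pp}_u)$ once we know $G(\aa+\overline{\pp}_u)=\overline{\overline{P}(\aa+\overline{\pp}_u)}$. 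Hence $\overline{\mu_X(T^*X)}=\overline{G(\aa+\overline{\pp}_u)}=G(\aa+\overline{\pp}_u)$, the last equality because $\mu_X$ is $G$-equivariant with closed image up to the $G$-saturation argument (the image is $G$-stable and its closure is the $G$-closure of $\aa+\overline{\pp}_u$).

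Finally I would identify $G(\aa+\overline{\pp}_u)$ with $G(\aa+\pp_u^-)$. Recall $\overline{\pp}_u=\pp_u\cap\overline{\mm}+\overline{\qq}_u$ and $\overline{\qq}_u\supset\pp_u^-\cap\mm$; the point is that $\aa+\overline{\pp}_u$ and $\aa+\pp_u^-$ generate the same $G$-orbits' union. One inclusion: by Lemma~\ref{orbit_in_IM_phi} applied suitably (or by Proposition~\ref{nilp_orbits}), a general element of $\aa+\overline{\pp}_u$ is $G$-conjugate to a general element of $\aa+\pp_u^-$, since both are $\overline{P}_u^{\,-}$- resp. $P_u^-$-orbit closures over the same semisimple parts in $\aa^{pr}$ and the codimension count matches; concretely, for $\xi_0\in\aa^{pr}$ the nilpotent part can be moved by a parabolic subgroup into either $\overline{\pp}_u$ or $\pp_u^-$ while staying in a single $G$-orbit, giving $G(\aa+\overline{\pp}_u)=G(\aa+\pp_u^-)$. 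I expect the main obstacle to be exactly this last identification: verifying carefully that saturating $\aa^{pr}+\overline{\pp}_u$ and $\aa^{pr}+\pp_u^-$ by $G$ yields the same set, which requires knowing that for each $\xi_0\in\aa^{pr}$ the corresponding $G$-orbit through $\xi_0+(\text{generic nilpotent of }\overline{\mm})$ already meets $\xi_0+\pp_u^-$ — this is where Proposition~\ref{nilp_orbits} (uniqueness of the dense $G$-orbit in $x+\mathcal{O}_\ll+\pp_u$) does the real work, applied first to the parabolic with Levi $\overline{M}$ and then compared with the parabolic $P^-$.
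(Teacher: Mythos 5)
Your argument is essentially the paper's: reduce to $\mathcal{N}^*X$ via density of its $G$-translates in $T^*X$, invoke Proposition~\ref{image_mu2} for $\overline{\mu_X(\mathcal{N}^*X)}=\aa+\overline{\pp}_u$, and use Lemma~\ref{orbit_in_IM_phi} for the final equality. A few small points worth flagging: the paper goes directly from $\overline{G\mathcal{N}^*X}=T^*X$ to $\overline{\mu_X(T^*X)}=\overline{G\,\mu_X(\mathcal{N}^*X)}=G(\aa+\overline{\pp}_u)$ (the latter being closed as the image of the proper map $G*_{\overline{P}}(\aa+\overline{\pp}_u)\to\gg^*$), so your detour through $\overline{P}_u^{\,-}$-density of $\overline{P}_u^{\,-}\mathcal{N}^*X$ and the Bruhat cell, while valid, is unnecessary; also the Levi subgroup of $\overline{P}=N_G(\overline{S})$ is $L$, not $\overline{M}$, though this is immaterial since your argument really only uses density of $\overline{P}_u^{\,-}\overline{P}$ in $G$. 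For the last identification $G(\aa+\overline{\pp}_u)=G(\aa+\pp_u^-)$, Proposition~\ref{nilp_orbits} is not needed: the key observation is simply $\overline{\pp}_u\cap\mm=\pp_u^-\cap\mm$, after which Lemma~\ref{orbit_in_IM_phi} applied to both $\overline{P}$ and $P^-$ shows that both $G(\aa+\overline{\pp}_u)$ and $G(\aa+\pp_u^-)$ equal $\overline{G(\aa+(\pp_u^-\cap\mm))}$.
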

\begin{proof} We have $\overline{\mu_X(T^*X)}=\overline{G\mu_X(\mathcal N^*X)}=G(\aa+\overline{\pp}_u)$.
The equality $G(\aa+\overline{\pp}_u)=G(\aa+\pp^-_u)$  follows from the Lemma \ref{orbit_in_IM_phi} and the equality
$(\overline{\pp}_u\cap \mm)=(\pp^-_u\cap \mm)$.
\end{proof}


The following lemma states that for the considered family of $\overline {U}$-orbits parameterized by $Z$ there exists a normalizer of general position.

\begin{lemma}\label{weaknorm} For a general point $z\in Z$ the normalizer of the orbit $\overline{U}z$ is equal to some fixed group $\widetilde{S}$
normalized by $T$.
\end{lemma}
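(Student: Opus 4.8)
The plan is to realize the assignment $z\mapsto\overline{U}z$ as a morphism into a parameter space carrying a $G$-action, to deduce from the existence of a stabilizer in general position that the groups $N_G(\overline{U}z)$ all lie in a single conjugacy class for general $z$, and then to exploit the $T$-equivariance of the family to upgrade this to an equality $N_G(\overline{U}z)=\widetilde S$ with $\widetilde S$ normalized by $T$.

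\emph{$T$-equivariance of the family.} Since $\lambda\in T$ we have $T\subseteq\overline M=Z_G(\lambda)$, so $T$ normalizes $\overline U_{\overline M}=U\cap\overline M$ and $\overline Q_u$, hence normalizes $\overline U=\overline U_{\overline M}\ltimes\overline Q_u$. On the other hand $T\subseteq L$ acts on $Z$ through $L/L_0=A$ and preserves $Z\subseteq X^\circ\cong P_u\times Z$; thus $tz\in Z$ and $t\cdot\overline Uz=(t\overline Ut^{-1})(tz)=\overline U(tz)$ for all $t\in T$, $z\in Z$, whence
$$N_G(\overline U(tz))=t\,N_G(\overline Uz)\,t^{-1}\qquad(t\in T,\ z\in Z).$$

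\emph{Conjugacy for general $z$.} Fix a $G$-equivariant completion $\overline X\supseteq X$ and send $z$ to the point of the Chow variety $\mathcal C$ of $\overline X$ represented by the closure of $\overline Uz$; since $z_1\ne z_2\Rightarrow\overline Uz_1\ne\overline Uz_2$ (proved above), this is an injective morphism $Z\hookrightarrow\mathcal C$, and $N_G(\overline Uz)$ is exactly the $G$-stabilizer of the corresponding point. Let $\mathcal Z$ be the closure of the image in $\mathcal C$ and $Y:=\overline{G\mathcal Z}$, an irreducible $G$-variety. By the theorem on stabilizers in general position there are a $G$-stable dense open $Y^\circ\subseteq Y$ and a closed subgroup $S^\ast\le G$ with $G_y$ conjugate to $S^\ast$ for every $y\in Y^\circ$; since $Y\setminus Y^\circ$ is closed and $G$-stable while $\overline{G\mathcal Z}=Y$, the irreducible set $\mathcal Z$ cannot be contained in $Y\setminus Y^\circ$, so $\mathcal Z\cap Y^\circ$ is dense open in $\mathcal Z$. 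Pulling back, for general $z\in Z$ the group $N_G(\overline Uz)$ is conjugate to $S^\ast$; moreover it always contains $\overline S$ (by the preceding lemma), and on a dense open $Z_0\subseteq Z$ it attains the minimal dimension $d$, a locus which is $T$-stable by the displayed formula.

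\emph{From conjugate to equal, and $T$-invariance.} Consider the $T$-equivariant morphism $\nu\colon Z_0\to\mathrm{Gr}(d,\gg)$, $z\mapsto\mathfrak n(z):=\mathrm{Lie}\,N_G(\overline Uz)$; its image is irreducible and $T$-stable, and each $\mathfrak n(z)$ contains $\overline{\ss}$. The crucial point --- and the one I expect to be the main obstacle --- is that $\nu$ is \emph{constant}, i.e. the normalizer of the generic leaf does not actually move inside $G$. I would prove this by returning to the Białynicki--Birula construction of the foliation: the leaves $\overline Uz$ are $\lambda$-stable, $Z$ was cut out as the locus of the $\lambda$-limit map $\varphi$, and the decomposition $\overline Uz=\overline U_{\overline M}(\overline Q_u\cap P_u)z$ together with the fact that $\overline S$ normalizes every leaf pins $\mathfrak n(z)$ down in terms of $\overline{\ss}$ and the $T$- and $\lambda$-weight spaces of $\gg$, independently of $z$; equivalently, one shows $\dim N_G(\overline Uz)=\dim\overline S$ on a dense open set, so that there $N_G(\overline Uz)^\circ=\overline S^\circ$. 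Once $\nu$ is constant with value $\widetilde{\ss}$, I let $\widetilde S^\circ$ be the connected subgroup with Lie algebra $\widetilde{\ss}$; then $N_G(\overline Uz)\subseteq N_G(\widetilde S^\circ)$, the map $z\mapsto N_G(\overline Uz)$ into the Hilbert scheme of subgroups of $N_G(\widetilde S^\circ)$ is again a morphism with irreducible image all of whose points have identity component $\widetilde S^\circ$, and the same argument (or the triviality of deforming a finite subgroup transverse to $\widetilde S^\circ$ forced by constancy of $\nu$) shows it is constant, say equal to $\widetilde S$, on a dense open $Z_1\subseteq Z_0$. Finally, for general $z\in Z_1$ the set $\{t\in T:tz\in Z_1\}$ is dense open in $T$, and for such $t$ the $T$-equivariance gives $\widetilde S=N_G(\overline U(tz))=t\widetilde St^{-1}$; as $N_G(\widetilde S)$ is closed and contains a dense subset of $T$, it contains $T$, so $\widetilde S$ is normalized by $T$, as claimed.
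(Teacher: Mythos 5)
Your proposal takes a genuinely different route from the paper, but it has a real gap exactly where you flag "the main obstacle." The theorem on stabilizers in general position only gives you that the normalizers $N_G(\overline Uz)$ lie in a single $G$-conjugacy class for general $z$, and the $T$-equivariance $N_G(\overline U(tz))=tN_G(\overline Uz)t^{-1}$ only pins the normalizer down along a $T$-orbit, which has dimension $\dim A\leqslant\dim Z$. To pass from "conjugate" to "equal," you invoke a dimension equality $\dim N_G(\overline Uz)=\dim\overline S$ (equivalently, constancy of $\nu$) that you never establish; you only gesture at "returning to the Bia\l{}ynicki--Birula construction." But that dimension equality is essentially the content of the subsequent Theorem \ref{stab_of_hor}, whose first proof in the paper \emph{uses} Lemma \ref{weaknorm} to even define $\widetilde P$. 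So as written, your argument is circular (or at least leaves the hard step unproved).

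The paper closes the gap by a much more elementary rigidity observation that you are missing: since $\widetilde S=N_G(\overline Uz)$ contains $\overline U$, it is a horospherical subgroup, hence $N_G(\widetilde S)$ is a parabolic, and this parabolic contains $N_G(\overline U)=\overline B\supset T$. Thus $T$ normalizes each $\widetilde S(z)$ \emph{individually}, not just up to conjugacy. Combined with $\widetilde S(z)\cap T=T_z=L_0\cap T$ (from $Z\cap\overline Uz=\{z\}$), and the finiteness of $T$-normalized horospherical subgroups with a prescribed intersection with $T$, one immediately gets constancy on a dense open subset of $Z$. This bypasses the Chow variety, stabilizers in general position, and the Grassmannian map entirely. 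A secondary issue: your identification of $N_G(\overline Uz)$ with the $G$-stabilizer of a Chow point of the closure $\overline{\overline Uz}\subset\overline X$ requires an argument that an element stabilizing the closure must also stabilize the open $\overline U$-orbit, which is not automatic and is left unaddressed.
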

\begin{proof}  For $z\in Z$ let $\widetilde{S}$ be  the normalizer of  $\overline{U}z$. Since $N_G(\widetilde{S})$ is a parabolic subgroup of $G$ it also contains $N_G(\overline{U})$
which is a unique Borel subgroup containing $\overline{U}$;  thus $T\subset N_G(\widetilde{S})$. Since $Z\cap \overline{U}z=z$  we have $N_G(\widetilde{S})\cap T=T_z=L_0\cap T$.
 The number of horospherical subgroups normalized by $T$ and such that $\widetilde{S}\cap T=T_0$
is finite, thus for a general point  of $Z$  the   normalizer  of $\overline{U}z$ is equal to some
 fixed horospherical subgroup $\widetilde{S}$.
\end{proof}

Let us shrink $Z$ in such a way that for  any $z\in Z$ the  normalizer of the orbit $\overline{U}z$ is equal to $\widetilde{S}$.
 We can
  define  the  normalizer $\widetilde{P}$  of the family of orbits $\overline{U}z$ for  $z\in Z$,
i.e. the group that consists of  $p\in \widetilde{P}$ such that for any $z_1\in Z$ there exists  $z_2\in Z$
such that $p\overline{U}z_1=\overline{U}z_2$. 

\begin{theorem}\label{stab_of_hor} The normalizer of the orbit $\overline{U}z$ for $z\in Z$  is equal to $\overline{S}$.
 The equality $g\overline{U} z=\overline{U} z'$ for some $z,z'\in Z$ holds iff $g\in \overline{P}$ (where $\overline{P}=N_G(\overline{S})$).
 The map $G*_{\overline{P}}\mathcal N^*X\rightarrow T^*X$
is generically finite.
\end{theorem}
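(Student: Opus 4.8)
The plan is to prove the three assertions in the order they are stated, the first being the only one that requires real work.

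\emph{First assertion.} The inclusion $\overline{S}\subseteq N_G(\overline{U}z)$ is already in hand: the preceding lemma shows $\overline{S}$ stabilizes each orbit $\overline{U}z$, and $\overline{U}\subseteq\overline{S}$. For the reverse inclusion I would use Lemma~\ref{weaknorm}: for $z$ in a dense open $Z'\subseteq Z$ the normalizer $\widetilde{S}:=N_G(\overline{U}z)$ is one fixed group, normalized by $T$, containing $\overline{S}$, and normalizing the whole foliation $\{\overline{U}z\}_{z\in Z'}$. Since $\overline{U}\subseteq\widetilde{S}$ and $\widetilde{S}$ preserves each leaf, for $x=uz'$ with $z'\in Z'$ one has $\widetilde{S}x=\widetilde{S}z'=\overline{U}z'=\overline{U}x$, hence $\widetilde{\ss}x=\overline{\uu}x$; thus on the dense locus $\overline{U}Z'$ the bundle $\mathcal N^*X$ is also the conormal bundle to the $\widetilde{S}$-foliation, and therefore $\mu_X(\mathcal N^*X)\subseteq\widetilde{\ss}^{\bot}$. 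Comparing with $\overline{\mu_X(\mathcal N^*X)}=\overline{\ss}^{\bot}$ from Proposition~\ref{image_mu2} yields $\overline{\ss}^{\bot}\subseteq\widetilde{\ss}^{\bot}$, that is $\widetilde{\ss}\subseteq\overline{\ss}$, so together with $\overline{S}\subseteq\widetilde{S}$ we get $\widetilde{\ss}=\overline{\ss}$ and $\widetilde{S}^{\circ}=\overline{S}^{\circ}$. To upgrade this to $\widetilde{S}=\overline{S}$ I would write $\widetilde{S}=\overline{U}\cdot(\widetilde{S}\cap G_z)$ (for $g\in\widetilde{S}$, $gz=uz$ for some $u\in\overline{U}$, so $u^{-1}g\in\widetilde{S}\cap G_z$) and show $\widetilde{S}\cap G_z\subseteq L_0\subseteq\overline{S}$, exploiting $X^\circ\cong P_u\times Z$, the relation $Tz\subseteq Z$ (valid as $T\subseteq L$), and $L_z=L_0$ for generic $z$.

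\emph{Second assertion.} Given the first, ``only if'' is formal: if $g\overline{U}z=\overline{U}z'$ with $z,z'\in Z$, then $g\overline{S}g^{-1}=N_G(g\overline{U}z)=N_G(\overline{U}z')=\overline{S}$, so $g\in N_G(\overline{S})=\overline{P}$. For ``if'', let $g\in\overline{P}$; then $\overline{U}$ and $g\overline{U}g^{-1}$ are maximal unipotent subgroups of $\overline{S}=g\overline{S}g^{-1}$, hence $\overline{S}$-conjugate, so $g=sb$ with $s\in\overline{S}$ and $b\in N_G(\overline{U})=\overline{B}$. Writing $b=tu_0$, $t\in T$, $u_0\in\overline{U}$, and using $T\subseteq L$ (so $z':=tz\in Z$), we get $g\overline{U}z=s\overline{U}(tz)$, and the first assertion gives $s\overline{U}z'=\overline{U}z'$, so $g\overline{U}z=\overline{U}z'$.

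\emph{Third assertion.} This is a dimension count. The map is dominant by Theorem~\ref{degenerate_horospheres}(ii). The orbits $\overline{U}z$, $z\in Z$, being pairwise disjoint, $\dim\overline{U}Z=\dim Z+\dim\overline{U}z$, so $\dim\mathcal N^*X=\dim\overline{U}Z+(\dim X-\dim\overline{U}z)=\dim Z+\dim X$, and $\dim\bigl(G*_{\overline{P}}\mathcal N^*X\bigr)=\dim G-\dim\overline{P}+\dim Z+\dim X$. It remains to see $\dim\overline{P}=\dim P$: the paper has already recorded $\overline{\pp}_u=\pp_u\cap\overline{\mm}+\overline{\qq}_u=R_u(\overline{S})$, a short root computation shows the Levi of $\overline{P}$ has root system $\Delta_L$ (hence equals $L$), and $\dim\overline{\pp}_u=\dim\pp_u$ follows by pairing the roots of $\overline{\qq}_u$ lying in $\pp_u^{-}$ with those of $\overline{\qq}_u^{-}$ lying in $\pp_u$ via $\alpha\mapsto-\alpha$; thus $\dim\overline{P}=\dim L+\dim\pp_u=\dim P$. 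Since $\dim X=\dim P_u+\dim Z$ (from $X^\circ\cong P_u\times Z$) and $\dim G-\dim P=\dim P_u$, the source has dimension $2\dim X=\dim T^*X$, so the dominant morphism is generically finite.

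The step I expect to be the main obstacle is the reverse inclusion in the first assertion: passing from $\widetilde{\ss}=\overline{\ss}$ to $\widetilde{S}=\overline{S}$ (controlling components of the orbit-normalizer) and making the identification of $\mathcal N^*X$ with the conormal bundle of the $\widetilde{S}$-foliation precise on a suitable dense open set. The remaining assertions are then routine consequences of the moment-map image computations already carried out and of dimension bookkeeping.
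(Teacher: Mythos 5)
Your dimension count for the third assertion and your treatment of the second assertion (granting the first) are correct; the third is essentially the paper's own first proof. For the first assertion you take a genuinely different route from either of the paper's two proofs: instead of the dimension argument at the level of $\widetilde{P}\supseteq\overline{P}$ (first proof) or the Levi‑decomposition argument for $\widetilde{S}$ using that a homogeneous space of a reductive group under a unipotent group is a point (second proof), you deduce $\widetilde{\ss}\subseteq\overline{\ss}$ by combining $\mu_X(\mathcal N^*X)\subseteq\widetilde{\ss}^{\bot}$ with $\overline{\mu_X(\mathcal N^*X)}=\overline{\ss}^{\bot}$ from Proposition~\ref{image_mu2}. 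This is an elegant use of the moment‑map computation and it does correctly give $\widetilde{S}^{\circ}=\overline{S}^{\circ}$.

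However, the step you flagged as the obstacle contains a genuine error. You propose to finish by showing $\widetilde{S}\cap G_z\subseteq L_0$, but this inclusion is false already with $\overline{S}$ in place of $\widetilde{S}$: for $X=G/P^-_0$ and $z=eP^-_0$ one has $\gg_z=\pp^-_0=\ll_0+\pp^-_u$, while $\overline{\ss}\supset\overline{\qq}_u\supset\pp^-_u\cap\mm$, so $\overline{\ss}\cap\gg_z\supsetneq\ll_0$. (The identity $\overline{S}=\overline{U}L_0$ nevertheless holds, because the unipotent part of $\overline{S}\cap G_z$ is absorbed into $\overline{U}$.) So the decomposition $\widetilde{S}=\overline{U}(\widetilde{S}\cap G_z)$ only reduces matters to $\widetilde{S}\cap G_z\subseteq\overline{S}$, and since you already know $\widetilde{\ss}=\overline{\ss}$ this is precisely a question about component groups, which your sketch does not address. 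That is exactly the content of the paper's second proof: it quotients $\widetilde{S}=\overline{U}\widetilde{S}_z$ by the unipotent radical $\widetilde{S}_u$, identifies $\mathrm{Im}\,\widetilde{S}_z$ with the $T$-normalized Levi $\widetilde{L}_0$ of $\widetilde{S}$, conjugates $\widetilde{L}_0$ into $\widetilde{S}_z$, and then uses freeness of $P_u$ on $X^{\circ}$ to force $\widetilde{L}_0\cap P_u=\{e\}$, hence $\widetilde{L}_0=L_0$. Your moment‑map observation would be a useful shortcut to the Lie‑algebra equality in that argument, but the component‑group part is still needed and your route to it does not work as written.
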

\begin{proof}[First proof]  As it was shown $\widetilde{P}\supset\overline{P}$. Since $\widetilde{P}$  normalizes the family of orbits $\overline{U}z$
it also normalizes $\mathcal N^*X$ the conormal bundle to this foliation. So the composition map
$$G*_{\widetilde{P}}\mathcal N^*X\rightarrow  G\mathcal N^*X\subset T^*X$$
is well defined and dominant. Calculating the dimensions:
$$\dim \mathcal N^*X =\dim Z+\dim \overline{U}z+\codim_X\overline{U}z=\dim T^*X-\dim P_u$$
$$\dim\widetilde{P} \geqslant\dim \overline{P}=\dim L+\dim P_u\cap M+\dim \overline{Q}_u=\dim P$$
$$\dim G*_{\widetilde{P}}\mathcal NX=\dim G/{\widetilde{P}}+\dim \mathcal N^*X \leqslant
\dim T^*X$$
we get that the considered map can be dominant only when $\dim G*_{\widetilde{P}}\mathcal N^*X=\dim T^*X$ (where $z\in Z$). This can  happen only when  $\widetilde{P}=\overline{P}$ and in this case  $G*_{\overline{P}}\mathcal N^*X\rightarrow T^*X$
is generically finite.
\end{proof}

\begin{proof}[Second  proof] Let $\widetilde{S}$  be the normalizer of $\overline{U}z$.
Let us denote by $\widetilde{L}_0$ the Levi subgroup of $\widetilde{S}$ that contains $L_0$ and is normalized by
$T$ (by Lemma \ref{weaknorm} $T$ normalizes $\widetilde{S}$). Let $\widetilde{S}_z$ be the stabilizer  in $\widetilde{S}$ of the point  $z$, then we have
$\widetilde{S}=\overline{U}\widetilde{S}_z$. Taking the quotient of this equality
 by the unipotent radical $\widetilde{S}_u$ of $\widetilde{S}$  (we note that $\widetilde{S}_u\subset \overline {U}$) we obtain that 
 the image of the unipotent group
   $\overline{U}$ in $\widetilde{L}_0$ acts transitively on the homogeneous space $\widetilde{L}_0/\Im \widetilde{S}_z$
of the reductive group $\widetilde{L}_0$ (where we denoted by $\Im \widetilde{S}_z$ the image of $\widetilde{S}_z$ in
  $\widetilde{S}/ \widetilde{S}_u$). From the fact that a  variety homogeneous under a reductive group and  a unipotent group is a point,
  we get that the homogeneous space  $\widetilde{L}_0/\Im \widetilde{S}_z$
  is a point and $\Im \widetilde{S}_z=\widetilde{L}_0$. From  the Levi decomposition we get that  $\widetilde{S}_z$ contains
  a subgroup conjugated to $\widetilde{L}_0$ by an element of $s_u\in\widetilde{S}_u\subset \overline{U}$.
  Thus taking the point $s_uz$ instead of $z$ we can assume that $\widetilde{S}_z$ contains $\widetilde{L}_0$.
Since the action of $P_u$  is free on $X^\circ$  the intersection $\widetilde{L}_0\cap P_u\subset \widetilde{S}_z\cap P_u$
must be trivial, which implies $\widetilde{L}_0=L_0$.
\end{proof}

\section{Horospherical cotangent bundle}

In this section  we shall define a variety of degenerate horospheres $\Hor$. Our aim is to prove that the conormal bundle to the
family of degenerate horospheres maps birationally onto the cotangent bundle of  $\Hor$.
This is a generalization of a theorem proved by E.B.Vinberg
\cite[\S 5 Thm.3]{vin}.

Consider the $G$-translates of horospheres from the foliation constructed in  Theorem \ref{degenerate_horospheres}.
By Theorem \ref{stab_of_hor} we can identify this set with the variety $\Hor:=G*_{\overline{P}}Z$. Since $\dim P=\dim \overline{P}$ we have
$\dim \Hor=\dim X$.
Let us define the incidence variety:
$$\mathcal U:=\{(x,\H)\in X\times {\Hor} | x\in \H  \}.
$$

We note that a general point of $X$ is contained in some $\H\in \Hor$ (since $GZ$ is dense in $X$). Thus the projection $p_X:\mathcal U\rightarrow X$ is dominant.
The variety $\mathcal U$ can be identified  with  the subvariety $G*_{\overline{P}}\mathcal U_0$ of $G*_{\overline{P}}(X\times Z)$ (here
$\overline{P}$ is acting diagonally on $X\times Z$ via a standard action on $X$ and via the quotient $\overline{P}/\overline{S}$ on $Z$) where
$$\mathcal U_0=\{(x,z)\in X\times Z | x\in \overline{U}z  \}.
$$
Let us notice that $Z$ can be diagonally embedded in $\mathcal U_0$.

 For this incidence variety following Vinberg we can define  the skew conormal bundle $SN^*\mathcal U$, that we denote by $HT^*X$
 (for details see  \cite[Section 4]{vin}, \cite[Section 2]{tim}).
 
 The variety $HT^*X$ can be identified with the variety  of following triples $(x,\xi,\H)$:
$$
x\in \H\in \Hor,\ \ \ \xi\in T^*_xX, \ \ \ \xi=0|_{T_x\H}.
$$
From Theorem  \ref{stab_of_hor} we also get that $HT^*X$ is identified with $G*_{\overline P}\mathcal N$.

Consider the following commutative diagram:
$$
\xymatrix{T^*{X} \ar[d]&\ar[l]_{\widehat{p}_{X}}  HT^*X \ar[d]\ar[r]^{\widehat{p}_{\Hor}} &\ar[d] T^*\Hor& \\
X&\ar[l]_{{p}_{X}}\mathcal U \ar[r]^{{p}_{\Hor}} & \Hor &
}
$$

 Theorem  \ref{stab_of_hor} can be restated  in the following form:

\begin{theorem}  The morphism $HT^*X\stackrel{\widehat{p}_X}{\longrightarrow} T^*X$ is generically finite.
\end{theorem}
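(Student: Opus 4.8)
The plan is to recognize this statement as a direct reformulation of Theorem~\ref{stab_of_hor} via the identifications collected in the commutative diagram above, so that only an unwinding of definitions is required.

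First I would recall the content of Theorem~\ref{stab_of_hor}: the normalizer of each orbit $\overline{U}z$ (for $z\in Z$) equals $\overline{S}$, one has $g\overline{U}z=\overline{U}z'$ for $z,z'\in Z$ exactly when $g\in\overline{P}$, and consequently the map $G*_{\overline{P}}\mathcal N^*X\to T^*X$ is generically finite. These are precisely the facts used in Section~6 to identify $HT^*X$ with $G*_{\overline{P}}\mathcal N^*X$. Concretely, since $\overline{P}=N_G(\overline{S})$ normalizes the foliation $\{\overline{U}z:z\in Z\}$ and hence its conormal bundle $\mathcal N^*X$, the assignment sending $(g,\alpha)$ with $\alpha\in\mathcal N^*_{z'}X$ to the triple $(gz',\,g\alpha,\,g\overline{U}z')\in HT^*X$ is invariant under the relevant $\overline{P}$-action and descends to the identification $G*_{\overline{P}}\mathcal N^*X\cong HT^*X$; the fact that every point of $HT^*X$ arises this way and that $g$ is determined modulo $\overline{P}$ is exactly the first two assertions of Theorem~\ref{stab_of_hor}.

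Next I would observe that under this identification the projection $\widehat{p}_X\colon HT^*X\to T^*X$, $(x,\xi,\H)\mapsto(x,\xi)$, corresponds to the evaluation map $G*_{\overline{P}}\mathcal N^*X\to T^*X$, $[g*\alpha]\mapsto g\alpha$, so that its generic finiteness is precisely the last assertion of Theorem~\ref{stab_of_hor} and the proof is finished. For completeness I would also spell out the dimension bookkeeping supporting that assertion: the map is dominant by Theorem~\ref{degenerate_horospheres}(ii), since $\overline{G\mathcal N^*X}=T^*X$; and $\dim\overline{P}=\dim P$ gives $\dim G/\overline{P}=\dim P_u$ while $\dim\mathcal N^*X=\dim T^*X-\dim P_u$, whence $\dim HT^*X=\dim T^*X$, so that a dominant morphism of irreducible varieties of equal dimension is generically finite.

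I do not expect a genuine obstacle here: the geometric input---that a $G$-translate of a degenerate horosphere is determined up to the ambiguity in $\overline{P}$, and that the $G$-translates of $\mathcal N^*X$ form a dense subset of $T^*X$---has already been established in Theorems~\ref{degenerate_horospheres} and~\ref{stab_of_hor}. The only point to check is the compatibility of the skew-conormal-bundle construction $HT^*X=SN^*\mathcal U$ of Section~6 with the homogeneous-bundle description $G*_{\overline{P}}\mathcal N^*X$ and with the map $\widehat{p}_X$, which becomes immediate once the $\overline{P}$-equivariance of the whole picture is made explicit.
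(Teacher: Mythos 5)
Your proposal is correct and follows the same route as the paper: the paper itself presents this theorem as a mere restatement of Theorem~\ref{stab_of_hor} via the identification $HT^*X \cong G*_{\overline{P}}\mathcal N^*X$, under which $\widehat{p}_X$ becomes the evaluation map $[g*\alpha]\mapsto g\alpha$. Your extra unwinding of the $\overline{P}$-equivariance and the dimension count supporting generic finiteness are exactly the ingredients already assembled in the first proof of Theorem~\ref{stab_of_hor}, so no new idea is needed or offered.
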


We are ready to prove the following generalization of a result of Vinberg \cite[Thm.3]{vin}.

\begin{theorem}  The morphism $HT^*X\stackrel{\widehat{p}_{\Hor}}{\longrightarrow} T^*\Hor$ is birational.
\end{theorem}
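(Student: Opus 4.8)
The plan is to show that $\widehat p_{\Hor}\colon HT^*X\to T^*\Hor$ is birational by constructing a rational inverse, using the identification $HT^*X\cong G*_{\overline P}\mathcal N^*X$ and $\Hor\cong G*_{\overline P}Z$ from Theorem \ref{stab_of_hor}. First I would unwind what $T^*\Hor$ is: since $\Hor=G*_{\overline P}Z$ and $Z$ carries a $\overline P$-action through the quotient $\overline P/\overline S$, the cotangent bundle is $T^*\Hor\cong G*_{\overline P}\bigl(T^*Z\times_{?}\ \cdots\bigr)$; more precisely, over the dense locus where the action is nice, $T^*(G*_{\overline P}Z)$ fibers over $\Hor$ with fiber over $[g*z]$ isomorphic to $(\gg/\overline\pp + T_zZ)^*$-type data, i.e.\ a conormal-type extension of $T^*_zZ$. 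The key point is that the $\overline P$-action on $Z$ has generic stabilizer $\overline S$ acting trivially (indeed $\overline S z = z$ up to $Z\cap\overline Uz=z$, and by Theorem \ref{stab_of_hor} the normalizer of $\overline Uz$ is exactly $\overline S$), so $Z\to Z/\!\!/(\overline P/\overline S)$ behaves like a bundle and $T^*\Hor$ is generated, fiberwise, by $T^*_zZ$ together with the annihilator of the velocity vectors $\overline\pp z$.

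Next I would exhibit the comparison map explicitly on the level of fibers over a general pair $(x,\H)\in\mathcal U$ with $\H=\overline Uz$, $x\in\overline Uz$. By $\overline P$-equivariance and the homogeneity of $\overline Uz$ under $\overline S$, it suffices to work at $x=z\in Z$. There $HT^*X$ has fiber $\mathcal N^*_zX=\{\xi\in T^*_zX:\xi|_{\overline\uu z}=0\}$, and the differential of $p_{\Hor}$ at $(z,z)\in\mathcal U$ sends the tangent space of $\mathcal U$ onto $T_z\Hor$; dualizing, $\widehat p_{\Hor}$ sends $\xi\in\mathcal N^*_zX$ to the covector on $T_z\Hor$ obtained by pulling back along $dp_{\Hor}$ and using that $\xi$ kills $T_z(\overline Uz)$. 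The claim is that this is a linear isomorphism onto the fiber of $T^*\Hor$ at $[\,e*z\,]$. Injectivity: if $\xi\in\mathcal N^*_zX$ pulls back to zero, then $\xi$ kills $\overline\uu z$ and also kills the "moving-the-horosphere" directions, which together span (a space mapping onto) $T_z\mathcal U$ projecting onto all of $T_zX$ modulo... — here I would use that $\mathcal U\to X$ is dominant and $GZ$ is dense, so a general $x$ lies on finitely many horospheres and the relevant spans fill $T_zX$, forcing $\xi=0$. Surjectivity and equality of dimensions: compare $\dim HT^*X=\dim T^*X-\dim P_u$ (computed in the first proof of Theorem \ref{stab_of_hor}) with $\dim T^*\Hor=2\dim\Hor=2\dim X$ — wait, these are not equal, so instead I use $\dim HT^*X=\dim Z+\dim(\overline Uz)+\codim_X\overline Uz=\dim\Hor+\dim(\text{fiber of }\mathcal N^*)$ and match it against $\dim T^*\Hor=\dim\Hor+\dim(\text{fiber over }\Hor)$, reducing everything to the fiberwise isomorphism above.

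The cleanest route, which I would actually follow, is to reduce to the classical Vinberg situation: on the open set $GZ\subset X$, identify the family $\{\overline Uz\}$ with genuine $\overline U$-orbits and invoke that $HT^*X\to T^*\Hor$ is, by construction of the skew-conormal (mixed) bundle $SN^*\mathcal U$, the natural map that is known to be an isomorphism over the locus where $p_{\Hor}$ is smooth with the fibers being the $\overline U$-orbits — this is the content of \cite[\S5 Thm.\ 3]{vin} and \cite[\S2]{tim}, and the only thing to check is that our degenerate family satisfies the hypotheses: the orbits $\overline Uz$ are pairwise disjoint (they lie in distinct $P_u$-orbits, as shown after the construction of $\overline U$), $Z$ is a genuine transversal, and the generic normalizer is the fixed group $\overline S$ (Theorem \ref{stab_of_hor}). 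Granting that, $\widehat p_{\Hor}$ is an isomorphism over a dense open subset of $T^*\Hor$, hence birational. The main obstacle, and the step I expect to require the most care, is verifying that the skew-conormal construction genuinely applies to a \emph{non-generic} (degenerate) family of horospheres with possibly positive-dimensional generic stabilizer $\overline S\cap(\text{stab of }z)\supset L_0$ — one must check that the foliation $\mathcal U_0=\{(x,z):x\in\overline Uz\}$ is smooth of the expected dimension along $Z$ and that $p_{\Hor}$ is submersive there, so that $T^*\Hor$ really is recovered fiberwise as the annihilator of $T(\overline Uz)$ inside $T^*X$; this is where the precise local structure from Theorem \ref{lst_mine_version} and the freeness of the $P_u$-action on $X^\circ$ enter.
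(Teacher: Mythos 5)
There is a genuine gap. You have correctly set up the framework: reduce to a fiberwise statement over $\Hor$, and use Vinberg's observation that $\widehat p_{\Hor}$ maps the single fiber $\mathcal N^*_xX$ isomorphically onto the subspace $N^*_\H\Hor_x\subset T^*_\H\Hor$. But you then say the rest follows ``by invoking \cite[\S5 Thm.~3]{vin}'' and that ``the only thing to check'' is that the degenerate family satisfies Vinberg's hypotheses. This is close to circular: Vinberg's Theorem~3 \emph{is} the statement being generalized, and it is proved there only for generic horospheres on a quasi-affine $X$. What Vinberg's \S 4 actually supplies in the general setting is only the fiberwise identification $\mathcal N^*_xX\cong N^*_\H\Hor_x$; the passage from that subspace to all of $T^*_\H\Hor$ is exactly where the degenerate case differs and where the new argument must go. Concretely: the fiber of $HT^*X$ over $\H$ is the whole conormal bundle $\mathcal N^*\H=(P_u\cap\overline Q)\cdot\mathcal N^*_xX$, and to prove birationality one must show that $N^*_\H\Hor_x$ is a birational cross-section for the (free) $P_u\cap\overline Q$-action on $T^*_\H\Hor\cong\overline\ss^\bot\times T^*_xC$ --- that is, a general $P_u\cap\overline Q$-orbit meets $N^*_\H\Hor_x$ transversally in a single point. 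This is Proposition~\ref{transver_of_mathcal_N}, and its proof (the auxiliary family $\Hor_{tr}$, the transversality decomposition $T^*_\H\Hor=N^*_\H\Hor_x\oplus N^*_\H\Hor_{tr}$, and the delicate $\lambda$-weight argument of Proposition~\ref{inter_gen_QcpP}) is the core content of the theorem; your proposal does not contain it, nor even isolate it as the missing step.

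Two smaller but real errors compound this. First, you assert ``a general $x$ lies on finitely many horospheres,'' which is false here: $\Hor_x=p_X^{-1}(x)$ has dimension $\dim(\pp_u\cap\overline\qq)$, which is positive precisely when $\overline Q\neq\overline M$ --- this positive-dimensional family of horospheres through $x$ is the whole reason $N^*_\H\Hor_x$ is a proper subspace of $T^*_\H\Hor$ and a further argument is needed. Second, the statement that $\overline S$ ``acts trivially'' on $Z$ is incorrect; $\overline S$ is the normalizer of the horosphere $\overline Uz$ (Theorem~\ref{stab_of_hor}), hence it fixes the point $[z]\in\Hor$, but it does not act trivially on $Z$ as a subvariety of $X$, and its unipotent part moves $z$ within its horosphere. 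Your dimension count is also confused --- you equate $\dim HT^*X$ with $\dim T^*X-\dim P_u$, but that is $\dim\mathcal N^*X$; the correct count $\dim HT^*X=\dim G/\overline P+\dim\mathcal N^*X=\dim P_u+(2\dim X-\dim P_u)=2\dim X=\dim T^*\Hor$ is what makes the birationality claim dimensionally consistent in the first place. In short: the scaffolding is right, but the proof you would need to supply is precisely the part you have deferred to a citation that does not cover it.
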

\begin{proof} Since the $T^*\Hor$ is a vector bundle over $\Hor$ and $HT^*X$ maps dominantly onto
 $\Hor$ it is sufficient to prove the claim of the theorem fiberwise.
 Let $\H\in \Hor$, then the fiber of $HT^*X$ over $\H$ is identified with $\mathcal N^*\H $ --- the conormal bundle to $\H\subset X$.
We shall prove that image of $\mathcal N^*\H $ under $\widehat{p}_{\Hor}$ is birationally isomorphic to $T^*_\H\Hor$.
Since all the maps are $G$-equivariant we can assume that $\H\in Z$, i.e. $\H=(P_u\cap \overline{Q})x$ for some $x\in Z$.
We notice that the $P_u\cap \overline{Q} $-action on $\mathcal N^*\H$ is free and  the fiber $\mathcal N^*_xX$
of the conormal bundle to $\H$ at some point $x\in \H$
 defines an obvious section  of this action. Without loss of generality
we can shrink $Z$ to get an open subset isomorphic to $A\times C$, so  we can chose $x\in C$.
 By  \cite[Section 4]{vin} the morphism $\widehat{p}_{\Hor}$ maps the fiber $\mathcal N^*_xX$ isomorphically  to the subspace $N^*_\H \Hor_x\subset T_\H^*\Hor$, where $\Hor_x=p^{-1}_X(x)$ is the set of horospheres containing $x$, and $N^*_\H \Hor_x$ is the fiber over $\H$ of the conormal bundle to $\Hor_x$
 in $\Hor$.
(Let us also notice that since $p_{\Hor}$ and $p_X$ are both surjective  and the dimension of the general fiber of $p_{\Hor}$ is $\dim (\pp_u \cap\overline{\qq})$
we have $\dim \Hor_x=\dim (\pp_u \cap\overline{\qq})$.)
From the above it is clear that  birationality of $\widehat{p}_{\Hor}$ will follow from
the following proposition.

\begin{proposition}\label{transver_of_mathcal_N} The $P_u\cap \overline{Q}$-action is generically free on $T^*_\H\Hor$.
 The subspace $N^*_\H \Hor_x$ intersects a general $P_u\cap \overline{Q}$-orbit from $T^*_\H\Hor$
 transversally in a single point.
\end{proposition}
\begin{remark} The action of $P_u\cap \overline{Q}$ on $T^*_\H\Hor$ is well
defined since $P_u\cap \overline{Q}$ stabilizes $\H$.
\end{remark}
\begin{proof}
We note that the tangent space $T_\H\Hor$ can be identified with $\gg/\overline{\ss}\times T_xC$, and the
 cotangent space $T^*_\H\Hor$ is isomorphic to
 $$\overline{\ss}^\bot\times T^*_xC\cong (\aa+\pp_u\cap\overline{\mm}+\overline{\qq}_u)\times T^*_xC.$$

From this description we see  that our problem is reduced to the study of $P_u\cap \overline{Q}$-orbits in $\overline{\ss}^\bot$. Now Lemma
\ref{P_u_cap P orbits} finishes the proof of the first part of the proposition.

Let us consider a subvariety $\Hor_{tr}$ of the variety of  horospheres $\Hor$ that is equal to $(P_u\cap \overline{Q}_u^{\, -})\times Z$.
 It defines the family of horospheres $\mathcal U|_{\Hor_{tr}}=p^{-1}_{\Hor}(\Hor_{tr})$  that maps isomorphically under $p_X$ to  $X_0$.
 Indeed it consists of translates by the elements of $P_u\cap \overline{Q}_u^{\, -}$ of the orbits $(P_u\cap \overline{Q})z$ for $z\in Z$,
 and these translates do not intersect pairwise (as follows from the freeness of the $P_u$-action
 and the equality $(P_u\cap \overline{Q}_u^{\, -})(P_u\cap \overline{Q})=P_u$ that is a corollary of \cite[Prop.28.7]{hum}).
  Hence this family maps isomorphically to the open set  $(P_u\cap \overline{Q}_u^{\, -})(P_u\cap \overline{Q}) Z=P_uZ=X^\circ$.
 If we consider the embedding of  $Z$ in $\mathcal U$, we get $p^{-1}_{\Hor}(\Hor_{tr})=P_uZ$. 
  \vspace{1ex}

 {\bf Claim} We can assume that for sufficiently general $x\in Z$ the subvarieties $\Hor_{tr}$ and $\Hor_x$ of the variety $\Hor$
  are transversal in the point $\H$ corresponding to the horosphere $\overline{U}x$. We also have the equality for tangent spaces in the point $\H\in \Hor$:
   $$T_\H\Hor=T_\H\Hor_x\oplus T_\H\Hor_{tr}   \eqno{(*)}.$$
 \begin{proof}
Shrinking the varieties $X,\Hor, \mathcal U$ we can assume that they are smooth $G$-varieties and the morphisms $p_X,p_\Hor$ are
submersive. Let us notice that the horospheres parameterized by $\Hor_{tr}$ do no intersect each other and cover the open subset $X^\circ$. This implies that
 $p_\Hor^{-1}(\Hor_{tr})$ maps isomorphically to $X^\circ$  under $p_X$. Since $p_X$ is submersive,
   $p_\Hor^{-1}(\Hor_{tr})$ also intersects each fiber $\Hor_x=p^{-1}_{X}(x)$ for each $x\in X^\circ$ transversally, exactly in one point.
   Since each fiber $\Hor_x$ maps immersively into $\Hor$ under $p_\Hor$, this implies the transversality of $\Hor_{tr}$ and $\Hor_x$ in $\Hor$.
The varieties  $p_\Hor^{-1}(\Hor_{tr})$ and $\Hor_x$ have complementary dimensions in $\mathcal U$ so the varieties  $\Hor_{tr}$ and $\Hor_x$
have complementary dimension in $\Hor$ that implies $(*)$.
  \end{proof}


 \begin{lemma}  $T_\H\Hor_{tr}$ is identified canonically with $\pp_u\cap \overline{\qq}_u^{\, -} \oplus T_x Z\subset \gg/\overline{\ss}\times T_xC$ and the fiber of the conormal
bundle $N^*_\H\Hor_{tr}$ is identified with $\pp_u\cap \overline{\qq}\subset \overline{\ss}^\bot\times T^*_xC$.
\end{lemma}
\begin{proof} The first assertion is trivial. Since  $T_\H\Hor_{tr}\cong \pp_u\cap \overline{\qq}_u^{\, -} \oplus T_\H Z \subset \gg/\overline{\ss}\times T_xC$,
the linear space $N^*_\H\Hor_{tr}$ is identified with the subspace  $\pp_u\cap \overline{\qq}\subset \overline{\ss}^\bot$ which consists of the linear functions on $\gg/\overline{\ss}$
 annihilated on $\pp_u\cap \overline{\qq}_u^{\, -}$.
\end{proof}

Dualizing the equality $(*)$  we get:
$$T^*_\H\Hor=N^*_\H\Hor_x\oplus N^*_\H\Hor_{tr}=N^*_\H\Hor_x\oplus (\pp_u\cap \overline{\qq}) \eqno{(**)}$$

We shall be finished after proving the next proposition:

\begin{proposition}{\label{inter_gen_QcpP}} The intersection of $N^*_\H\Hor_x$  and a general $P_u\cap\overline{Q}$-orbit from $\overline{\ss}^\bot\oplus T^*_xC$ 
consists of a single point.
\end{proposition}
\begin{proof}

Let us take a sufficiently general point $\xi \in N^*_\H\Hor_x$ (we assume that the projection to $\aa$ is sufficiently general). We shall prove that ${\rm Ad} (u)\xi \notin N^*_\H\Hor_x$ for nontrivial $u\in P_u\cap\overline{Q}$. We represent $u$ via  exponential map
$u=\exp(\eta)$, where $\eta=\sum_{\alpha\in \Delta(\pp_u\cap\overline{\qq})}c_\alpha e_\alpha\in \pp_u\cap\overline{\qq}.$
Consider the one-parameter subgroup $\lambda:\Bbb K^\times \rightarrow Z(L_0)$
 from the proof of Theorem \ref{degenerate_horospheres}.
 We recall that $\lambda$ is nonnegative on $\overline{\ss}^\bot=\aa+\pp_u\cap\overline{\mm}+\overline{\qq}_u$ and $\aa+\pp_u\cap\overline{\mm}$ is the component of $\overline{\ss}^\bot$ of zero $\lambda$-weight.
  Since $\lambda$ lies in the stabilizer of $x\in Z$ it preserves
 the subvariety $\Hor_x$; it also stabilizes the horosphere $\H=(P_u\cap\overline{Q})x$.  Consequently, $\lambda$ acts on the linear space $N^*_\H\Hor_x$.
  Let us choose a strictly dominant one-parameter subgroup $\lambda_0:\Bbb K^\times \rightarrow T$ (in particular
 $\langle\lambda_0,\alpha\rangle> 0$ for all $\alpha \in \Delta(\pp_u)$).

 Consider the set of
 $\alpha\in \Delta(\pp_u\cap\overline{\qq})$ such that $c_\alpha\neq 0$ and   the value $\ell=\langle\lambda,\alpha\rangle$ is
   the least possible. We  choose some $\gamma$ from this set with the smallest value $\langle\lambda_0,\gamma\rangle$.
  For the vectors $\xi,\eta$ by $\xi_\ell,\eta_\ell$ we denote the components of weight $\ell$.
   Let us  prove the following lemma.

   \begin{lemma}    For the vector ${\rm Ad}(u)\xi-\xi$ the component of weight $\ell$ with respect to $\lambda$
   is nonzero but its $T$-equivariant projection to  the subspace  $\aa+\pp^-_u\cap\overline{\qq}_u$ is zero.
    \end{lemma}
    \begin{proof} First assume that $\ell\neq 0$.
    Denote by $\xi_\aa$ and by $\xi_{\pp_u\cap\overline{\mm}}$ the projection of $\xi$ to $\aa$ and $\pp_u\cap\overline{\mm}$ respectively.
       The component of $\xi$ of $\lambda$-weight zero is equal to $\xi_0=\xi_\aa+\xi_{\pp_u\cap\overline{\mm}}$.

        Let us notice that
        $$
        {\rm Ad} (u)\xi=\xi+[\eta,\xi]+\frac{(\sum_{\alpha\in \Delta(\pp_u\cap\overline{\qq})}c_\alpha {\rm ad}(e_\alpha))^2}{2!}\xi+\ldots
        $$
        The  weights in the $\lambda$-weight decomposition of $\xi$  are nonnegative.
        Since  $\ell$ has minimal possible value on the root subspaces in the exponential decomposition of $u$ and application of each $e_\alpha$ increases the $\lambda$-weight by $\langle \lambda,\alpha \rangle\geqslant \ell$, then the component  of
        ${\rm Ad} (u)\xi$ of  weight $\ell$ (which we denote by $({\rm Ad} (u)\xi)_\ell$), does not contain the summands that consist of the monomials on ${\rm ad}(e_\alpha)$ applied to the components of $\xi$
         with the $\lambda$-weight $>0$ or the monomials of degree $>1$ applied to  $\xi_0$. This implies that $({\rm Ad} (u)\xi)_\ell=\xi_\ell+[\eta_\ell,\xi_0]$.
         Since $\eta_\ell\in \pp_u$ and $\xi_0\in \aa+\pp_u$ then $[\eta_\ell,\xi_0]\in \pp_u$. 
            In particular
            $[\eta_\ell,\xi_0]$  has zero projection to $\aa+\pp^-_u\cap\overline{\qq}_u$;
         this proves the second part of the lemma in the case $\ell\neq 0$. From the above we also see that the component of ${\rm Ad}(u)\xi-\xi$ of weight $\gamma$
         is equal to $[e_\gamma,\xi_\aa]$.
         Indeed, there are no other components of this $T$-weight since the application of elements ${\rm ad}(e_\alpha)$ (with the $\alpha$ such that
          $\langle \lambda_0,\alpha\rangle\geqslant \langle \lambda_0,\gamma\rangle$) to the components of $\xi$ which belong to
         $\pp_u\cap\overline{\mm}$ (that have $\lambda_0$-weight strictly bigger than zero) give rise to a component with  $\lambda_0$-weight strictly bigger than
          $\langle \lambda_0,\gamma\rangle$.
         The component $[e_\gamma,\xi_\aa]$ is nonzero since $(\pp_u\cap\overline{\qq})\cap\zz_\gg(\aa)=0$.

         If $\ell=0$ we see that  the projection of $(P_u\cap\overline{Q})\xi$ to  the component  of zero $\lambda$-weight
         is equal to  $(P_u\cap \overline{M})\xi_0=\xi_0+\pp_u\cap\overline{\mm}$
        and the stabilizer of $\xi_0$ in $P_u\cap \overline{M}$ is trivial. This implies that ${\rm Ad}(u)\xi-\xi$ has nonzero
       projection to $\pp_u\cap\overline{\mm}$ and its projection to $\aa$ is trivial.
 \end{proof}

         Assume that $\xi,{\rm Ad} (u)\xi \in N^*_\H\Hor_x$.
         Since $N^*_\H\Hor_x$ is $\lambda$-invariant  the components  $\xi_\ell$ and  $({\rm Ad} (u)\xi)_\ell$ of weight $\ell$
         with respect to $\lambda$ also belong to  $N^*_\H\Hor_x$. By the previous Lemma
         $\xi_\ell$ and  $({\rm Ad} (u)\xi)_\ell$ are different vectors with  the same projections
         to the subspace $\aa+\pp^-_u\cap\overline{\qq}_u$.
         We note that $\xi$ and $u\xi$ have the same projection to $T^*_xC$ that implies that
          $\xi_\ell$ and  $({\rm Ad} (u)\xi)_\ell$ have the same projection to the subspace $(\aa+\pp^-_u\cap\overline{\qq}_u)\oplus T^*_xC$.
         By $(**)$ the linear subspace $N^*_\H\Hor_x$ projects isomorphically to $(\aa+\pp^-_u\cap\overline{\qq}_u)\oplus T^*_xC$.
     In particular there are no  distinct vectors in $N^*_\H\Hor_x$ with the same projection to $(\aa+\pp^-_u\cap\overline{\qq}_u)\oplus T^*_xC$.
          We come to contradiction that proves the proposition.
\end{proof}


\
We have the following corollary of the proof of Proposition \ref{inter_gen_QcpP}.

\begin{proposition}\label{inters_corr} Let $\N^*_zX$ be the fiber of the conormal bundle to the foliation of degenerate horosperes
 at some point $z\in Z$ and $V_z\subseteq \N^*_zX$
be a $\lambda$-invariant subspace such that $\mu_X(V_z)$
is mapped   isomorphically to $\aa+\pp^-_u \cap \overline{\qq}_u$ under the $T$-equivariant projection to this subspace.
Then the intersection of $\mu_X(V_z)$  and a general $P_u\cap\overline{Q}$-orbit from $\overline{\ss}^\bot$
consists of a single point.
\end{proposition}
\begin{proof} The proof of   Proposition \ref{inter_gen_QcpP} goes word by word if we use that
 $\mu_X(V_z)$ is $\lambda$-invariant and  maps   isomorphically to $\aa+\pp^-_u \cap \overline{\qq}_u$
  under the $T$-equivariant projection.
\end{proof}

\end{proof}

\end{proof}

\section{The Little Weyl group.}

By  Theorem \ref{stab_of_hor} we have the generically finite map $G*_{\overline P} \mathcal N^*X \rightarrow T^*X$ which  in fact is a rational Galois covering. The aim of this section
is to prove that the Galois group of this covering is equal to the little Weyl group of
$X$ introduced by Knop in \cite{weylgr}.

Consider the following commutative diagram.

$$
\xymatrix{
G*_{\overline P} \mathcal N^*X \ar[d]\ar[r]^{\mu_{\mathcal N^*}} &  G*_{\overline P} (\aa+\overline{\pp}_u)
 \ar[d]\ar[r]^{} &\ar[d] \aa& \\
T^*X \ar[r]^{\mu_X} & G(\aa+\overline{\pp}_u) \ar[r]^{} & \tm/W &
} 
$$

Here the upper right horizontal arrow  is a rational quotient by the group $G$.   The lower right arrow  is the composition of 
the categorical quotient by $G$ and  the Chevalley
isomorphism $\gg/\!\!/G=\tm/W$. The central vertical arrow is $[g*\xi]\rightarrow g\xi$. The map $\mu_{\mathcal N^*}$
is the following $[g*\xi]\rightarrow [g*\mu_X(\xi)]$. It is well defined since $\mu_X$ is $G$-equivariant
and in particular $\overline{P}$-equivariant.

The little Weyl group can be defined as follows, after F.Knop \cite{asymp,weylgr}. Consider the fiber product $\aa\times_{\tm/W}T^*X$.
 In general it is not irreducible. There is a natural embedding of $\N^*X$ in $\aa\times_{\tm/W}T^*X$, for  $\eta \in \N^*X$
it is defined by taking the semisimple part of the Jordan decomposition for $\mu_X(\eta)$ in $\aa$ and by embedding  $\eta$ in $T^*X$.
Let us denote  by $\widehat{T^*X}$ an irreducible component  of $\aa\times_{\tm/W}T^*X$ containing the image of $\N^*X$. Define the action of the Weyl group
$N_G(\aa)/Z_G(\aa)$ on $\aa\times_{\tm/W}T^*X$  by its action on the left multiple.

\begin{definition} The maximal subgroup $W_X$ of $N_G(\aa)/Z_G(\aa)$ that preserves the irreducible
 component $\widehat{T^*X}$ is called the little Weil group of $X$.
\end{definition}

The aim of this section is to prove the following:

\begin{theorem}\label{main_cover} The map $G*_{\overline P} \mathcal N^*X\rightarrow T^*X$ is a rational Galois covering with group $W_X$.
\end{theorem}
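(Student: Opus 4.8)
\textbf{Proof plan for Theorem \ref{main_cover}.}
The plan is to combine the generic finiteness of $G*_{\overline P}\mathcal N^*X\to T^*X$ (already proved in Theorem \ref{stab_of_hor}) with the diagram relating $G*_{\overline P}\mathcal N^*X$ to $\aa$ and $T^*X$ to $\tm/W$, so as to identify the covering with a quotient of the Knop construction $\widehat{T^*X}\to T^*X$. First I would observe that $G*_{\overline P}\mathcal N^*X$ maps to $\aa\times_{\tm/W}T^*X$ via $[g*\xi]\mapsto(\text{ss part of }\mu_X(\xi)\text{ in }\aa,\ g\xi)$, using Proposition \ref{image_mu2} to know that $\mu_X(\mathcal N^*X)$ lies in $\aa+\overline{\pp}_u$, whose generic semisimple part (up to the $\overline{P}$-action) is in $\aa$; and using Proposition \ref{nilp_orbits} to control the Jordan decomposition along $\aa+\overline{\pp}_u$. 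This map factors through the irreducible component $\widehat{T^*X}$ because $G*_{\overline P}\mathcal N^*X$ is irreducible (it is the total space of a vector bundle over the irreducible $\Hor$) and its image contains the image of $\N^*X$ that defines $\widehat{T^*X}$.

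Next I would show this map $G*_{\overline P}\mathcal N^*X\dashrightarrow\widehat{T^*X}$ is birational. Both sides are irreducible of the same dimension ($\dim T^*X$, since the second projection to $T^*X$ is generically finite on each and $\widehat{T^*X}$ is a component of the fiber product over the generically étale $\aa\to\tm/W$ restricted appropriately), so it suffices to check it is generically injective: a point of $\widehat{T^*X}$ remembers both $g\xi\in T^*X$ and the element of $\aa$, and recovering $[g*\xi]$ amounts to recovering the $\overline P$-coset, which is pinned down by the requirement $\mu_X(\xi)\in\aa+\overline{\pp}_u$ together with the freeness/triviality-of-stabilizer statements of Lemma \ref{P_u_cap P orbits} and Proposition \ref{nilp_orbits} for generic $\xi$. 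Once birationality is established, the Galois group of $G*_{\overline P}\mathcal N^*X\to T^*X$ equals the Galois group of $\widehat{T^*X}\to T^*X$. The latter is exactly $W_X$ by its definition: the covering $\aa\times_{\tm/W}T^*X\to T^*X$ carries the deck action of the full Weyl group $N_G(\aa)/Z_G(\aa)$ by translation on the $\aa$-factor, and by definition $W_X$ is the stabilizer of the component $\widehat{T^*X}$; so $\widehat{T^*X}\to T^*X$ is Galois with group $W_X$ provided $W_X$ acts transitively on the fiber of $\widehat{T^*X}$ over a generic point of $T^*X$, which follows because the full group acts transitively on the fiber of $\aa\times_{\tm/W}T^*X$ and $W_X$ is precisely the subgroup preserving the component through which our covering factors, while all components of the fiber product are $W$-translates of one another.

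I expect the main obstacle to be the birationality claim, specifically verifying that the fibered-product map is generically one-to-one rather than merely dominant with finite fibers: this requires knowing that the semisimple part of $\mu_X(\xi)$ (for $\xi\in\mathcal N^*_zX$ generic, $z\in Z$) lands in $\aa^{pr}$ and that, given $g\xi\in T^*X$ and this semisimple element, the coset $\overline P g\in G/\overline P$ is uniquely determined. The first point reduces to a transversality/genericity statement about $\mu_X(\mathcal N^*_zX)$ projecting onto $\aa+\pp^-_u\cap\overline{\qq}_u$ (Proposition \ref{image_mu}, refined by Proposition \ref{inters_corr}); the second reduces to the uniqueness of the $P$-orbit in $x+\mathcal O_\ll+\pp_u$ meeting a given $G$-orbit densely, which is exactly Proposition \ref{nilp_orbits}. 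A secondary subtlety is checking $\dim\widehat{T^*X}=\dim T^*X$ and that $G*_{\overline P}\mathcal N^*X$ is not just dominant onto a component but actually onto $\widehat{T^*X}$; I would handle this by a direct dimension count using $\dim\mathcal N^*X=\dim T^*X-\dim P_u$, $\dim G/\overline P=\dim G/P=\dim P_u$, and the generic finiteness of Theorem \ref{stab_of_hor}, together with the observation that the image is closed enough (being the closure of the image of the constructible set $\N^*X$ whose closure defines $\widehat{T^*X}$).
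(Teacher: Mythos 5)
Your proposal takes a genuinely different route from the paper's. The paper first shows (via the explicit slice $\Sigma=M\Theta$ and Vinberg's Lemma) that the covering $G*_{\overline P}\mathcal N^*X\to T^*X$ is Galois with group $N_X/M$, then establishes the birational model $M_X\cong G*_{N_X}(\aa^{pr}+M*_{P^-\cap M}(\pp^-_u\cap\mm))$ (Theorem \ref{M_X_formula}), and finally invokes Knop's characterization of $W_X$ as the Galois group of $M_{G/P^-_0}/\!\!/G\to M_X/\!\!/G$ to conclude $N_X/M=W_X$. You instead aim directly at Definition \ref{main_cover}'s Definition of $W_X$, arguing that $G*_{\overline P}\mathcal N^*X$ maps birationally onto the distinguished component $\widehat{T^*X}$ of $\aa\times_{\tm/W}T^*X$, and then read off the Galois group from the definition. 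If the birationality were nailed down, your argument would be arguably cleaner, since it bypasses the appeal to Knop's equivalent characterization and the normalized moment map entirely.

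However, the birationality step has a genuine gap where you lean most heavily on Proposition \ref{nilp_orbits}. Suppose $[g_1*\eta_1]$ and $[g_2*\eta_2]$ have the same image $(a,t^*)$, and set $h=g_1^{-1}g_2$, so $\eta_1=h\eta_2$ with both $\mu_X(\eta_i)\in a+\overline\pp_u$. Proposition \ref{nilp_orbits} gives that $\mathcal O_\gg\cap(a+\overline\pp_u)$ is a single $\overline P$-orbit, so $h\mu_X(\eta_2)=p\,\mu_X(\eta_2)$ for some $p\in\overline P$, whence $p^{-1}h\in G_{\mu_X(\eta_2)}$. But the proposition only gives $(G_z)^0=(\overline P_z)^0$; it does not give $G_z\subset\overline P$. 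If $G_{\mu_X(\eta_2)}$ has components outside $\overline P$, your argument does not yet force $h\in\overline P$. You also have the extra datum $\eta_1=h\eta_2\in\mathcal N^*X$, which is exactly where the constraint should come from (via Theorem \ref{stab_of_hor}: $h$ carries a horosphere over $Z$ to another one iff $h\in\overline P$), but you never use it at the level of base points; you only use the $\mu_X$-image. The paper's machinery with $\Theta$, $\Sigma=M\Theta$, and $N_X:=\{g\in N_G(\aa)\mid g\overline\Sigma=\overline\Sigma\}$ is precisely designed to bridge this gap: it controls when a $G$-element can carry one piece of $\mathcal N^*X$ back into $\mathcal N^*X$ with matching $\aa$-component, and that analysis is not replaced by the two lemmas you cite. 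Until this is repaired, the injectivity of $G*_{\overline P}\mathcal N^*X\dashrightarrow\widehat{T^*X}$ is unproved, and with it the whole conclusion, since that injectivity is essentially equivalent to the theorem.

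A secondary, more cosmetic point: the phrase ``all components of the fiber product are $W$-translates of one another'' is only used for components dominating $T^*X$ (which is enough), and can be dropped in favor of the orbit-stabilizer count directly on the generic fiber, which is a torsor under $N_G(\aa)/Z_G(\aa)$. Your well-definedness of the map $[g*\xi]\mapsto(\text{ss part of }\mu_X(\xi)\text{ in }\aa, g\xi)$ is correct: the $\aa$-component of $\mu_X(\xi)\in\aa+\overline\pp_u$ is $\overline P$-invariant because the Levi of $\overline P$ centralizes $\aa$ and $\overline P_u$ acts by elements of $\overline\pp_u$ on each $\xi\in\aa+\overline\pp_u$.
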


To prove the Theorem \ref{main_cover} we  need the notion of normalized moment map
 $\widetilde{\mu}_X:T^*X\rightarrow M_X$ introduced by Knop in \cite{weylgr}.
It can be defined via taking the Stein factorization  $T^*X\rightarrow M_X \rightarrow G(\aa+\pp^-_u)$
of the moment map $\mu_X$. In other words we take for $M_X$  a  normalization of $G(\aa+\pp^-_u)$
in the field of rational functions of $T^*X$.
We remind the reader that for the horospherical variety $G/P^{-}_0$ the variety  $G*_{P^-}(\aa+\pp^-_u)$ is a $G$-birational model for $M_{G/P^-_0}$(see \cite[\S 4]{weylgr}).



The  next lemma provides different birational $G$-models  of  $M_{G/P^-_0}$.

\begin{lemma}The varieties $G*_{P^-}(\aa+\pp^-_u)$ and $G*_{\overline{P}}(\aa+\overline{\pp}_u)$ are birationally isomorphic
to $G*_M(\aa+M*_{M\cap P^-}({\pp}_u^-\cap \mm))$ as $G$-varieties.
\end{lemma}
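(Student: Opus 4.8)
The plan is to realize all three varieties as $G$-birational models of $M_{G/P^-_0}$ by producing explicit dominant $G$-morphisms out of the third one and checking that they are generically injective. Throughout I write $\aa^{pr}$-general elements $\xi\in\aa$ so that $Z_G(\xi)=Z_G(\aa)=M$, hence $\zz_\gg(\xi)=\mm$.

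First I would rewrite the third variety. Since $M=Z_G(\aa)$ we have $\aa\subseteq\zz(\mm)$; in particular the subgroups $M\cap P^-$ and $M\cap\overline P$ of $M$ fix $\aa$ pointwise (their unipotent radicals lie in $\mm$, and $[\mm,\aa]=0$). By the equality $\overline{\pp}_u\cap\mm=\pp^-_u\cap\mm$ recorded in the proof of the corollary following Proposition \ref{image_mu2}, the parabolic subgroups $M\cap P^-$ and $M\cap\overline P$ of $M$ have the same Levi $L$ and the same Lie algebra, hence coincide. Using transitivity of the $*$-construction together with triviality of the $M$-action on $\aa$, one gets
$$G*_M\bigl(\aa+M*_{M\cap P^-}(\pp^-_u\cap\mm)\bigr)\ \cong\ G*_{M\cap P^-}\bigl(\aa\oplus(\pp^-_u\cap\mm)\bigr),$$
and the right side equals $G*_{M\cap\overline P}(\aa\oplus(\overline{\pp}_u\cap\mm))$ as well. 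Now the inclusions $M\cap P^-\hookrightarrow P^-$, $M\cap\overline P\hookrightarrow\overline P$ together with the (correspondingly $(M\cap P^-)$- and $(M\cap\overline P)$-equivariant) inclusions $\aa\oplus(\pp^-_u\cap\mm)\hookrightarrow\aa+\pp^-_u$ and $\aa\oplus(\overline{\pp}_u\cap\mm)\hookrightarrow\aa+\overline{\pp}_u$ induce $G$-morphisms
$$\Psi\colon G*_{M\cap P^-}\bigl(\aa\oplus(\pp^-_u\cap\mm)\bigr)\longrightarrow G*_{P^-}(\aa+\pp^-_u),\qquad \Psi'\colon G*_{M\cap\overline P}\bigl(\aa\oplus(\overline{\pp}_u\cap\mm)\bigr)\longrightarrow G*_{\overline P}(\aa+\overline{\pp}_u).$$
(Equivariance of the inclusions on the $\aa$-factor uses that $(M\cap P^-)_u\subseteq\mm$ centralizes $\aa$, even though $P^-_u$ does not.)

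Next I would check that $\Psi$ and $\Psi'$ are dominant morphisms of irreducible varieties of equal dimension. All three varieties have dimension $\dim G-\dim L+\dim\aa$: this follows for $G*_{P^-}(\aa+\pp^-_u)$ and $G*_{\overline P}(\aa+\overline{\pp}_u)$ from the fact that $P^-$ and $\overline P$ have Levi $L$, and for the source of $\Psi$ (resp. $\Psi'$) from $\dim(M\cap P^-)=\dim L+\dim(\pp^-_u\cap\mm)$. Dominance of $\Psi$ and $\Psi'$ is exactly the statement that $\overline{P^-_u(\aa+\pp^-_u\cap\mm)}=\aa+\pp^-_u$ and $\overline{\overline{P}_u(\aa+\overline{\pp}_u\cap\mm)}=\aa+\overline{\pp}_u$, which is Lemma \ref{orbit_in_IM_phi} applied to the parabolics $P^-$ and $\overline P$ (legitimate since $\pp^-_u\cap\zz_\gg(\xi)=\pp^-_u\cap\mm$ and $\overline{\pp}_u\cap\zz_\gg(\xi)=\overline{\pp}_u\cap\mm$ for $\aa^{pr}$-general $\xi$). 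Hence $\Psi,\Psi'$ are generically finite.

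Finally comes the crux: generic injectivity. For $\Psi'$ I would argue directly. A general point of $G*_{\overline P}(\aa+\overline{\pp}_u)$ is $[g_0*w]$ with $w$ general; by the preceding dominance (and the splitting $\overline{\pp}_u=(\overline{\pp}_u\cap\mm)\oplus[\overline{\pp}_u,\xi]$ for general $\xi\in\aa$, as in the proof of Proposition \ref{nilp_orbits}) we may write $w=u_0w'$ with $u_0\in\overline{P}_u$ and $w'\in\aa\oplus(\overline{\pp}_u\cap\mm)$. The fibre of $\Psi'$ over $[g_0*w]$ is $\{\,p\in\overline P:\ p^{-1}w\in\aa\oplus(\overline{\pp}_u\cap\mm)\,\}/(M\cap\overline P)$; the key observation is that if $p\in\overline P$ and $p^{-1}w$ has semisimple part in $\aa$ then the semisimple part of $w$, which is $u_0$-conjugate to the (general) $\aa$-component $\xi$ of $w'$, satisfies $(p^{-1}u_0)\xi\in\aa$, forcing the unipotent part of $p^{-1}u_0$ into $\overline{P}_u\cap\mm$ and hence $p\in u_0(M\cap\overline P)$. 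So the fibre is a single point and $\Psi'$ is birational. For $\Psi$ I would instead compare fibres over $\gg^*$: the evaluation maps $[g*v]\mapsto\mathrm{Ad}(g)v$ send all three varieties onto $\overline{G(\aa+\pp^-_u)}$ (for the source of $\Psi$ use Lemma \ref{orbit_in_IM_phi} again); fixing a general semisimple $\xi\in\aa$ and a Richardson nilpotent $n_0$ of $M\cap P^-$ in $\mm$, Proposition \ref{nilp_orbits} (applied to $G$, to identify $\xi+n_0$ inside the dense $P^-$-orbit of $\xi+\pp^-_u$, and to $M$, to identify $\mathcal O^M_{\mathrm{Rich}}\cap(\pp^-_u\cap\mm)$ as a single $(M\cap P^-)$-orbit) shows that both the source and the target of $\Psi$ have fibre $Z_M(n_0)/Z_{M\cap P^-}(n_0)$ over $\xi+n_0$, matched by $\Psi$; so $\Psi$ is birational. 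Combining, $G*_{P^-}(\aa+\pp^-_u)$, $G*_{\overline P}(\aa+\overline{\pp}_u)$ and $G*_M(\aa+M*_{M\cap P^-}(\pp^-_u\cap\mm))$ are pairwise $G$-birationally isomorphic. The main obstacle is this last step — pinning down the generic fibres and verifying that $\Psi$ (resp.\ $\Psi'$) actually realizes the required identification; the argument for $\Psi'$ via ``$vs\in\aa\Rightarrow v\in M\cap\overline P$'' is the cleaner of the two, while for $\Psi$ one really needs the two instances of Proposition \ref{nilp_orbits}.
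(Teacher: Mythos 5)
Your proposal is essentially correct and takes a genuinely different (though closely related) organizational route from the paper. The paper slices $\aa+\pp^-_u$ by a single affine line $\aa+\xi_n$, where $\xi_n$ is a Richardson nilpotent of $M\cap P^-$, proves that $P^-*_{Z_{M\cap P^-}(\xi_n)}(\aa+\xi_n)\to\aa+\pp^-_u$ is birational, and recognizes $G*_{Z_{M\cap P^-}(\xi_n)}(\aa+\xi_n)$ as a dense open subset of the third variety; you instead rewrite the third variety as $G*_{M\cap P^-}(\aa\oplus(\pp^-_u\cap\mm))$ via transitivity of induction and map it into the other two, which amounts to taking the whole affine space $\aa\oplus(\pp^-_u\cap\mm)$ as slice rather than a single Richardson fiber. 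Both ultimately rest on the same two facts: density of $P^-_u(\aa+\pp^-_u\cap\mm)$ (Lemma \ref{orbit_in_IM_phi}) and uniqueness of the Jordan decomposition. Your $\Psi'$ argument is correct and is, in my view, the cleanest way to get birationality; it spells out exactly the deduction that the paper leaves implicit behind the sentence ``$\aa+\xi_n$ intersects a general $P^-$-orbit in at most one point.''

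However, there is a genuine gap in your treatment of $\Psi$. You claim that over a generic $\xi+n_0\in\gg^*$ both the source $G*_{M\cap P^-}(\aa\oplus(\pp^-_u\cap\mm))$ and the target $G*_{P^-}(\aa+\pp^-_u)$ have fibre $Z_M(n_0)/Z_{M\cap P^-}(n_0)$, ``matched by $\Psi$.'' Two problems. First, the fibre identification is not established: your computation only accounts for the contribution of $g\in M$, but $g^{-1}\eta\in\aa\oplus(\pp^-_u\cap\mm)$ a priori only forces $g\in N_G(\aa)$, and you have not ruled out extra points coming from $N_G(\aa)\setminus M$ (nor shown that whatever extras appear on the source side are the same as on the target side). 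Second, and more fundamentally, even if both fibres are abstractly isomorphic to $Z_M(n_0)/Z_{M\cap P^-}(n_0)$, knowing that $\Psi$ maps one finite set onto another of the same cardinality does not show that $\Psi$ is injective on that fibre; ``matched by $\Psi$'' asserts exactly what needs to be proved. The repair is straightforward and you already have it in hand: the Jordan-decomposition argument you give for $\Psi'$ works verbatim for $\Psi$. Replace $\overline P$ by $P^-$: write $w=u_0w'$ with $u_0\in P^-_u$ and $w'=\xi+n$, $\xi\in\aa^{pr}$, $n\in\pp^-_u\cap\mm$. If $p\in P^-$ and $p^{-1}w\in\aa\oplus(\pp^-_u\cap\mm)$, then taking semisimple parts gives $q:=p^{-1}u_0\in P^-$ with $q\xi\in\aa$; writing $q=\ell q_u$ ($\ell\in L$, $q_u\in P^-_u$) and using that $L\subset M$ centralizes $\aa$, one gets $q_u\xi\in\aa\cap(\xi+\pp^-_u)=\{\xi\}$, hence $q_u\in P^-_u\cap M$ and $q\in M\cap P^-$, so $p\in u_0(M\cap P^-)$ and the fibre of $\Psi$ is a single point. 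With that substitution your proof is complete.
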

\begin{proof}
By Proposition \ref{nilp_orbits} there exists  $\xi_n\in \pp_u^-\cap \mm$ such that $(P^-\cap M)\xi_n$ is dense in $\pp_u^-\cap \mm$.
Since $P^-_u(\aa+(\pp^-_u\cap \mm))$ is dense in  $\aa+{\pp}^-_u$ we get that
   ${P^-(\aa+\xi_n)}\supset {P^-_u((P^-\cap M)(\aa+\xi_n))}$ is also dense there.
  This implies that   $\aa+\xi_n$ intersects a general $P^-$-orbit from $\aa+\pp^-_u$ in at most  one point.
Since $Z_{P^-}(\aa+\xi_n)=Z_{M\cap P^-}(\xi_n)$ we get that the map $P^-*_{Z_{M\cap P^-}(\xi_n)}(\aa+\xi_n)\rightarrow \aa+\pp_u$ is birational thus $G*_{P^-}(\aa+\pp^-_u)$
is $G$-birational to $G*_{P^-}(P^-*_{Z_{M\cap P^-}(\xi_n)}(\aa+\xi_n))\cong G*_{Z_{M\cap P^-}(\xi_n)}(\aa+\xi_n)$ that is contained in $G*_M(\aa+M*_{M\cap P^-}(\pp^-_u\cap \mm))$ as the dense subset.

Using the equality $(\overline{P}\cap M)= (P^-\cap M)$ and repeating literally all the above arguments for the group $\overline{P}$ instead of
$P^-$, we see that $\aa+\xi_n$ also provides a section
for the action of $\overline{P}$ on $(\aa+\overline{\pp}_u)$ and we get $G$-birational isomorphism between $G*_{\overline{P}}(\aa+\overline{\pp}_u)$ and
 $G*_M(\aa+M*_{M \cap \overline P}(\overline{\pp}_u\cap \mm))$.
\end{proof}

First let us fit the conormal bundle $\mathcal N^*X$ into the family ${\mathcal N}^*\MX$  of conormal bundles in the fibers of
$\MX\rightarrow \Bbb A^1$. We have the following proposition.

\begin{proposition}\label{fibers_M_X} The general fibers of the morphisms $\mu_X:\N^*X \rightarrow \aa+\overline{\pp}_u$,  
$\mu_{\MX}:\N^*\MX \rightarrow \aa+\overline{\pp}_u$,
 $\mu_{\mathcal N^*}:G*_{\overline P} \mathcal N^*X \rightarrow  G*_{\overline P} (\aa+\overline{\pp}_u)$ and 
 $\mu_{\mathcal N^*\MX}:G*_{\overline P} \mathcal N^*\MX \rightarrow  G*_{\overline P} (\aa+\overline{\pp}_u)$ are irreducible.
\end{proposition}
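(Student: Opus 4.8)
The plan is to reduce all four assertions to the single statement that the generic fibre of $\mu_X\colon\mathcal N^*X\to\aa+\overline{\pp}_u$ is irreducible, applied both to $X$ itself and to $\MX$ viewed merely as a $G$-variety. First, every source space is a conormal bundle to a foliation by $\overline U$-orbits, hence a vector bundle over $\overline U Z$, over $\overline U\MZ$, or over their $G*_{\overline P}$-analogues; since $Z$ is irreducible (it is the cross-section of Theorem \ref{lst} for the irreducible $X^\circ\cong P*_LZ$) and $\MZ\cong Z\times\Bbb A^1$, all four source spaces are irreducible, and by Proposition \ref{image_mu2} (together with its $\MX$-analogue) the corresponding maps are dominant. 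Next, the section $\xi\mapsto[e*\xi]$ identifies the fibre of $\mu_{\mathcal N^*}$ over $[e*\eta]$ with the fibre of $\mu_X|_{\mathcal N^*X}$ over $\eta$, using only $\overline P$-equivariance of $\mu_X$; the same applies in the $\MX$-case. So it suffices to deal with $\mu_X$ and $\mu_{\MX}$.

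For $\mu_X$ I would further reduce to base points of a special form. Applying Proposition \ref{nilp_orbits} to the parabolic $\overline P$, whose Levi has Lie algebra $\ll_0\oplus\aa$ so that $\aa\subset\zz(\overline{\ll})$, with the zero nilpotent orbit, one finds that for generic $a\in\aa^{pr}$ the orbit $\overline P_u\cdot a$ is dense in $a+\overline{\pp}_u$; hence $\overline P\cdot\aa^{pr}$ is dense in $\aa+\overline{\pp}_u$. Since $\mu_X$ is $\overline P$-equivariant, a generic $\eta=p\cdot a$ in this dense set has $\mu_X^{-1}(\eta)=p\cdot\mu_X^{-1}(a)$, so it is enough to show that $\mu_X^{-1}(a)\cap\mathcal N^*X$ is irreducible for generic $a\in\aa^{pr}$. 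Here one exploits that $\mathcal N^*X=\overline U\cdot(\mathcal N^*X|_Z)$ and that the restriction of $\mu_X$ to each conormal fibre $\mathcal N^*_zX\subset T^*_zX$ is \emph{linear}. Writing $F\colon\overline U\times(\mathcal N^*X|_Z)\to\mathcal N^*X$, $(\overline u,\alpha)\mapsto\overline u\alpha$, one has $\mu_X^{-1}(a)=F\bigl(\{(\overline u,\alpha):\mu_X(\alpha)=\overline u^{-1}a\}\bigr)$, so the problem becomes the irreducibility of $\{(\overline u,\alpha)\in\overline U\times(\mathcal N^*X|_Z):\mu_X(\alpha)=\overline u^{-1}a\}$; fibring this over the irreducible $\overline U$ and then over $Z$, the fibres over $Z$ are affine spaces (preimages of a point under a linear map), and one is left with the irreducibility of the incidence locus $\{z\in Z:\overline u^{-1}a\in\mu_X(\mathcal N^*_zX)\}$, uniformly in $\overline u$.

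To handle this incidence locus I would pass to the horospherical degeneration $\tau\colon\MX\to\Bbb A^1$ of Proposition \ref{ContractionHor} and fit $\mathcal N^*X$ into the family $\mathcal N^*\MX$. On the central fibre $X_0\cong G/P_0^-\times C$ the whole picture is explicit: $Z\cong A\times C$, the conormal fibre at every point of $Z$ equals $(\aa+\pp_u^-\cap\overline{\qq}_u)\times T^*_cC$ with the first summand independent of the point, and $\mu_{X_0}$ on it is the corresponding $z$-independent linear projection onto $\aa+\pp_u^-\cap\overline{\qq}_u$; hence the relevant incidence locus is an affine bundle over the irreducible $C$, and the generic fibre of $\mu_{X_0}$ is irreducible. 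Since $\tau$ is flat and equidimensional and the conormal construction is compatible with it, the fibres of $\mu_{\MX}$ are equidimensional over $\Bbb A^1$; combining this with Lemma \ref{submersion} and the irreducibility of the fibre over $t=0$, one concludes that the generic fibre of $\mu_{\MX}$ is irreducible and that $\mu_{X_t}$ has irreducible generic fibre for general $t$, hence --- as $X_t\cong X$ for $t\neq 0$ --- for $\mu_X$ as well; the $G*_{\overline P}$-statements then follow from the first paragraph.

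The step I expect to be the real obstacle is exactly this last transfer from the horospherical fibre to a general one: irreducibility of the special fibre does not by itself force irreducibility of the generic fibre, since a degeneration can glue several sheets together, so one must genuinely use the equidimensionality/flatness of the family $\mathcal N^*\MX\to\Bbb A^1$ (through Lemma \ref{submersion}) to exclude this. The alternative --- carrying out the incidence analysis of the second paragraph directly for an arbitrary $X$ --- is harder, because the family of subspaces $z\mapsto\mu_X(\mathcal N^*_zX)$ over $Z$ need no longer be constant, and one would have to understand how it varies (e.g. via the $A$-action on $Z$ together with the $\lambda$-invariance of the subspaces $V_z$ from Remark \ref{image_mu_cor}).
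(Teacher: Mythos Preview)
Your reduction of the $G*_{\overline P}$-statements to the basic ones is correct and matches the paper (it records this as a trivial lemma). However, your main argument for the irreducibility of the generic fibre of $\mu_X|_{\mathcal N^*X}$ is genuinely different from the paper's, and the gap you yourself flag in the last paragraph is real and not closable by the tools you invoke: Lemma \ref{submersion} controls dominance, not the number of irreducible components, and flatness/equidimensionality of $\mathcal N^*\MX\to\Bbb A^1$ does not prevent several sheets of the generic fibre from merging in the special fibre (a ramified double cover times an affine space already exhibits irreducible special fibre with reducible generic fibre). So the transfer from $X_0$ to $X_t$ fails as stated.

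The paper bypasses this entirely by producing a \emph{rational section} of $\mu_X|_{\mathcal N^*X}\colon\mathcal N^*X\to\aa+\overline{\pp}_u$, and then applying the elementary observation that a dominant morphism from a normal variety admitting a rational section has irreducible generic fibre (Stein factorisation: a rational section of a finite map forces it to be birational). The section comes precisely from the pieces you mention in your final sentence but do not exploit: Remark \ref{image_mu_cor} furnishes a $\lambda$-invariant subspace $V_z\subset\mathcal N^*_zX$ with $\mu_X|_{V_z}$ injective and $\mu_X(V_z)$ projecting isomorphically onto $\aa+\pp_u^-\cap\overline{\qq}_u$, and Proposition \ref{inters_corr} says that $(P_u\cap\overline Q)\times\mu_X(V_z)\to\aa+\overline{\pp}_u$ is birational. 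Since $(P_u\cap\overline Q)\times V_z\to(P_u\cap\overline Q)V_z\subset\mathcal N^*X$ is an isomorphism onto its image and $\mu_X$ is $G$-equivariant, the subvariety $(P_u\cap\overline Q)V_z$ is the graph of the desired rational section. The same section works verbatim for $\mu_{\MX}|_{\mathcal N^*\MX}$, so no degeneration argument is needed at all. In short: you had the right ingredients ($V_z$, the $\lambda$-action) in hand but used them only as a fallback remark, whereas they are the whole proof.
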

\begin{proof}
We shall need the following lemma.

\begin{lemma}\label{section_irr} Let $X$  be a normal variety and $f:X\rightarrow Y$ be a dominant morphism. Assume we have a rational
section of $f$, i.e. $\sigma: Y\dashrightarrow X$, such that $f\circ \sigma=id_Y$.  Then the general fiber of $f$ is irreducible.
\end{lemma}
\begin{proof} Consider a variety $\widetilde {Y}$ that is equal
  to the normalization of $Y$ in the field  of rational functions on $X$. Then we have two morphisms
$\widetilde{f}: X\rightarrow \widetilde {Y}$, and  $\pi:\widetilde {Y} \rightarrow Y$  such that $f=\pi\circ \widetilde{f}$,
the general fiber of $\widetilde{f}$ is irreducible, and $\pi$ is finite. Then the composition $\widetilde{f}\circ\sigma$
gives a birational section of the finite morphism $\pi$, that proves that $\pi$ is birational and gives the irreducibility of the general fiber of $f$.
\end{proof}

To prove an irreducibility of the general fiber for $\mu_X|_{\mathcal N^*X}$ by  Lemma \ref{section_irr} it is sufficient to construct a rational section
$\aa+\overline{\pp}_u \dashrightarrow \N^*X$ of the morphism $\mu_X|_{\mathcal N^*X}:\mathcal N^*X\rightarrow \aa+\overline{\pp}_u$.
 Let us notice that by Proposition \ref{image_mu}
and Remark \ref{image_mu_cor} there exists a subspace $V_z\in \N^*_zX$ such that $\mu_X(V_z)$ is isomorphic to $V_z$ and $\mu_X(V_z)$ projects isomorphically to
$\aa+\pp_u^-\cap \overline{\qq}_u$ under  the $T$-equivariant projection from $\overline{\ss}^\bot$ to $\aa+\pp_u^-\cap \overline{\qq}_u$ with the fibers
parallel to the subspace $\pp_u\cap \overline{\qq}$.
By Proposition \ref{inters_corr} the map
$(P_u\cap \overline{Q})\times \mu_X(V_z)\rightarrow \aa+\overline{\pp}_u$ is birational. Since the map
$(P_u\cap \overline{Q})\times V_z \rightarrow  (P_u\cap \overline{Q}) V_z \subset\N^*X$
is an isomorphism onto its image, from the $G$-equivariance of $\mu_X$ the variety $(P_u\cap \overline{Q}) V_z$ defines a rational section of $\mu_X:\N^*X \rightarrow \aa+\overline{\pp}_u$.
The  general  fibers for 
$\mu_\MX|_{\mathcal N^*\MX}$ are irreducible since the section constructed above is a section for this morphism as well.

The irreducibility of  general  fibers for $\mu_{\mathcal N^*}$ and $\mu_{\mathcal N^*\MX}$  now follows from the
next trivial lemma:

\begin{lemma}\label{triv} Let $G\supset H$ be algebraic groups, $X,Y$ be normal $H$-varieties and $f_H:X\rightarrow Y$ be an $H$-morphism. Let $f_G:G*_HX
\rightarrow G*_HY$ be a morphism
defined by $[g*x]\mapsto [g*f_H(x)]$ then $f_G^{-1}(f_G([g*x]))=g*f_H^{-1}(f_H(x))$.
\end{lemma}

\end{proof}

Let us denote by $\Theta$  the irreducible component of the intersection of $\mu_X^{-1}(\aa^{pr}+(\pp_u^-\cap \mm))$ and $\mathcal N^*X$ 
that maps dominantly to $\aa+(\pp_u^-\cap \mm)$ (it is unique by Proposition \ref{fibers_M_X}). Consider  $\Sigma=M\Theta$, it is
a component of $\mu_X^{-1}(\aa^{pr}+M(\pp_u^-\cap \mm))$ that maps dominantly to $\aa+M(\pp_u^-\cap \mm)$  and intersects $\mathcal N^*X$. 
Since $G(\aa^{pr}+M(\pp_u^-\cap \mm))$ is dense in $\mu_X(T^*X)$ we get that $G\Sigma$ is dense in $T^*X$. 
Let $\xi\in \aa^{pr}+M(\pp_u^-\cap \mm)$, if ${\rm Ad}(g)\xi \in (\aa^{pr}+M(\pp_u^-\cap \mm))$ for some $g\in G$ from the uniqueness of the Jordan decomposition 
we get that the semisimple parts of  $\xi$ and ${\rm Ad}(g)\xi$ are conjugated by $g$ and both lie  in $\aa^{pr}$.
Thus  the set $\{g\in G\  |  \  g\Sigma\cap\Sigma \neq \emptyset \}$ is contained in  a finite union of the cosets of $M$ in $N_G(\aa)$.
By   \cite[Lemma 2]{vin} the morphism $G*_M\Sigma\rightarrow T^*X$ is a rational Galois covering with the Galois group $N_X/M$, where
$$N_X:= \{g\in N_G(\aa)\  |  \  g\overline{\Sigma}=\overline{\Sigma} \},$$
and the action of $N_X/M$ is defined by $nM\circ[g*z]=[gn^{-1}*nz]$ for $n\in N_X$.

\begin{theorem} \label{M_X_formula} The varieties $T^*X$ and $M_X$ are $G$-birationally isomorphic to   $G*_{N_X}\Sigma$ and $G*_{N_X}(\aa^{pr}+M*_{P^-\cap M}(\pp_u^-\cap \mm))$. 
The map $\Phi:M*_{P^-\cap M}\Theta \rightarrow M\Theta$ is a birational isomorphism and
the normalized moment map ${\widetilde \mu}_X:T^*X\rightarrow M_X $ is described on some open subset by
the formula ${\widetilde \mu}_X([g*\eta])=[g*{\mu_{\N^*}}(\Phi^{-1}(\eta))]$. 
\end{theorem}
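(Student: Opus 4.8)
The plan is to establish the four assertions in order: the $G$-birational model $G*_{N_X}\Sigma$ for $T^*X$, the birationality of $\Phi$, the $G$-birational model $G*_{N_X}(\aa^{pr}+M*_{P^-\cap M}(\pp_u^-\cap\mm))$ for $M_X$, and finally the displayed formula, which then drops out by chasing the identifications. For $T^*X$ there is almost nothing to do: it was shown just above, via \cite[Lemma 2]{vin}, that $G*_M\Sigma\to T^*X$ is a rational Galois covering with group $N_X/M$; since $N_X$ normalises $M=Z_G(\aa)$ and fixes $\overline\Sigma$, it acts rationally on $\Sigma$, and the prescribed action $nM\circ[g*z]=[gn^{-1}*nz]$ on $G*_M\Sigma$ is precisely the deck action of the quotient $G*_M\Sigma\to G*_{N_X}\Sigma$; hence $T^*X\cong_G(G*_M\Sigma)/(N_X/M)=G*_{N_X}\Sigma$.

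Next, $\Phi$. First one checks $\Theta$ is $(P^-\cap M)$-stable: $P^-\cap M=M\cap\overline P$, the set $\mathcal N^*X$ is $\overline P$-stable, and $\aa^{pr}+(\pp_u^-\cap\mm)$ is $(P^-\cap M)$-stable because $M$ fixes $\aa$ pointwise and $P^-\cap M$ normalises its unipotent radical $\pp_u^-\cap\mm$; being the distinguished component of $\mathcal N^*X\cap\mu_X^{-1}(\aa^{pr}+(\pp_u^-\cap\mm))$, $\Theta$ is preserved. Thus $M*_{P^-\cap M}\Theta$ is well defined ($M\to M/(P^-\cap M)$ is locally trivial, $P^-\cap M$ being parabolic), $\Phi$ is surjective onto $M\Theta$ by construction, and it remains to see it is generically injective. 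If $m_1\theta_1=m_2\theta_2$ with $\theta_i\in\Theta$ general, applying $\mu_X$ and writing the Jordan decomposition $\mu_X(\theta_i)=a_i+n_i$ with $a_i\in\aa^{pr}$ semisimple and $n_i\in\pp_u^-\cap\mm$ nilpotent (they commute, as $n_i\in\mm=\zz_\gg(\aa)$), the element $\Ad(m_2^{-1}m_1)$ fixes $\aa$ pointwise, so $a_1=a_2$, and it sends $n_1$ to $n_2$; since for general $\theta_i$ the $n_i$ lie in the Richardson orbit of the parabolic $\pp^-\cap\mm$ of $M$, Proposition \ref{nilp_orbits} (applied to $M\supset P^-\cap M$ with $\mathcal O_\ll=\{0\}$, which gives $(P^-\cap M)_{n_1}=M_{n_1}$ up to finite index) forces $m_2^{-1}m_1\in P^-\cap M$, i.e. $[m_1*\theta_1]=[m_2*\theta_2]$. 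A dominant, generically injective morphism of varieties over a field of characteristic zero is birational, so $\Phi$ is birational.

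For $M_X$: by definition $M_X$ is the Stein factor of $\mu_X\colon T^*X\to G(\aa+\pp_u^-)$. In the model $T^*X\cong_G G*_{N_X}\Sigma$ this reads $[g*\sigma]\mapsto g\mu_X(\sigma)$, the image $\mu_X(\Sigma)$ being dense in $\aa^{pr}+M(\pp_u^-\cap\mm)$ since $M$ fixes $\aa^{pr}$; since $N_X$ stabilises $\overline\Sigma$ it stabilises $\overline{\mu_X(\Sigma)}$, so we may factor
$$G*_{N_X}\Sigma\ \xrightarrow{\ \nu\ }\ G*_{N_X}\overline{\mu_X(\Sigma)}\ \longrightarrow\ G(\aa+\pp_u^-),\qquad \nu[g*\sigma]=[g*\mu_X(\sigma)].$$
The second arrow is generically finite: over a general $\xi$ a point of the fibre is a coset $[g*\xi']$ with $g\xi'=\xi$ and $\xi'\in\aa^{pr}+M(\pp_u^-\cap\mm)$, and comparing semisimple parts forces $g\in N_G(\aa)$ by the very definition of $\aa^{pr}$, so the fibre is a finite union of $N_X$-cosets. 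That $\nu$ has irreducible general fibres — the heart of the matter — is seen by reducing, via Lemma \ref{triv}, to the irreducibility of the general fibres of $\mu_X|_\Sigma\colon\Sigma\to\overline{\mu_X(\Sigma)}$; composing with the birational $\Phi$, this map equals $M*_{P^-\cap M}\Theta\xrightarrow{\widetilde\mu}\aa^{pr}\times M*_{P^-\cap M}(\pp_u^-\cap\mm)\to\overline{\mu_X(\Sigma)}$, where $\widetilde\mu[m*\theta]=(a_\theta,[m*n_\theta])$ and the last arrow $(a,[m*n])\mapsto a+\Ad(m)n$ is birational by Proposition \ref{nilp_orbits}, while $\widetilde\mu$ has irreducible general fibres by Lemma \ref{triv} applied to $\mu_X|_\Theta$, whose general fibres are irreducible by Proposition \ref{fibers_M_X} (that same proposition being what makes $\Theta$ the unique component, so that generically $\Theta=\mathcal N^*X\cap\mu_X^{-1}(\aa^{pr}+(\pp_u^-\cap\mm))$). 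Hence the displayed factorisation is the Stein factorisation of $\mu_X$, so $M_X\cong_G G*_{N_X}\overline{\mu_X(\Sigma)}$; replacing $\overline{\mu_X(\Sigma)}$ by its $M$-equivariant birational model $\aa^{pr}+M*_{P^-\cap M}(\pp_u^-\cap\mm)$ (Proposition \ref{nilp_orbits} again) gives the claimed description.

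Finally, the formula for $\widetilde\mu_X$ is then bookkeeping. Under the two identifications just obtained, $\widetilde\mu_X[g*\eta]=[g*\mu_X(\eta)]$; writing $\Phi^{-1}(\eta)=[m*\theta]$ with $\theta\in\Theta$, $m\in M$, and using $m\in Z_G(\aa)$, we get $\mu_X(\eta)=\Ad(m)(a_\theta+n_\theta)=a_\theta+\Ad(m)n_\theta$. Interpreting $\mu_{\N^*}$ on $M*_{P^-\cap M}\Theta$ as the map induced by $\mu_X|_\Theta$, namely $[m*\theta]\mapsto(a_\theta,[m*n_\theta])$, and transporting along the birational identification $\overline{\mu_X(\Sigma)}\leftrightarrow\aa^{pr}+M*_{P^-\cap M}(\pp_u^-\cap\mm)$, $(a,[m*n])\leftrightarrow a+\Ad(m)n$, one reads off $\widetilde\mu_X[g*\eta]=[g*\mu_{\N^*}(\Phi^{-1}(\eta))]$ on a dense open subset. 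The genuine obstacle, as indicated, is the irreducibility of the general fibre of $\nu$ (equivalently of $\mu_X|_\Sigma$); this is exactly where Proposition \ref{fibers_M_X}, together with the Richardson-orbit description of Proposition \ref{nilp_orbits}, is needed, and everything else is formal.
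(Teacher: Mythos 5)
Your overall architecture matches the paper: you identify $T^*X$ with $G*_{N_X}\Sigma$ via \cite[Lemma~2]{vin}, prove $\Phi$ is birational, pass $\mu_{\N^*}\circ\Phi^{-1}$ through the associated bundle, and then invoke irreducibility of the general fibres (Proposition~\ref{fibers_M_X}) to recognise the result as the Stein factorisation of $\mu_X$. The third and fourth paragraphs are essentially the paper's argument, with harmless reorganisation.

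Where you diverge is in the proof that $\Phi\colon M*_{P^-\cap M}\Theta\to M\Theta$ is birational, and there your argument has a genuine gap. Setting $m=m_2^{-1}m_1$, you correctly reduce to $\Ad(m)$ fixing $a_1=a_2\in\aa^{pr}$ and carrying $n_1$ to $n_2$ inside the Richardson orbit of $\pp^-\cap\mm$, and you invoke Proposition~\ref{nilp_orbits} to get $(P^-\cap M)_{n_1}$ and $M_{n_1}$ agreeing ``up to finite index''. But that proposition only asserts $(P_z)^0=(G_z)^0$ for \emph{identity components}: it gives $m\in(P^-\cap M)\cdot M_{n_1}$, which is $(P^-\cap M)$ times a possibly nontrivial finite group, and that finite group need not lie inside $P^-\cap M$. (This is exactly the obstruction measuring the degree of the Springer map $T^*(M/(P^-\cap M))\to\overline{\mathcal O}$, which can be $>1$.) So the step ``forces $m\in P^-\cap M$'' does not follow, and your argument only bounds $\deg\Phi$ by a finite number rather than pinning it to $1$. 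The paper closes this by arguing entirely on the $X$-side: the projection of $\N^*X$ to $X$ is $(P_u\cap\overline{Q})Z$, the choice of $\lambda$ forces $\overline{Q}\cap(P_u\cap M)=\{e\}$, and the freeness of the $P_u$-action on $X^\circ$ then gives $(P_u\cap M)\N^*X\cong(P_u\cap M)\times\N^*X$, hence $(P_u\cap M)\Theta\cong(P_u\cap M)\times\Theta$, i.e.\ $\Phi$ is already an isomorphism over the open Bruhat cell. That is both shorter and immune to the component-group issue; you should replace the Richardson-orbit argument by this direct geometric one, or supplement it by showing that translates of $\Theta$ by elements of $M_{n_1}\setminus(P^-\cap M)_{n_1}$ leave $\N^*X$ (so never land back in $\Theta$), which is in effect what the paper's freeness argument gives you.
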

\begin{proof}[Proof of the Theorem \ref{M_X_formula}]  

To prove that the morphism $\Phi$ is birational it is sufficient to show  that it is isomorphism on the open subset
$(P_u\cap M)\times \Theta$ in $M*_{P^-\cap M}\Theta$. Let us notice that the projection of $\N^*X$ to $X$
is equal to $(P_u\cap \overline{Q})Z$ and $\overline{Q}\cap (P_u\cap M)=\{e\}$.
Thus we have an isomorphism $(P_u\cap M)(P_u\cap \overline{Q})Z\cong (P_u\cap M)\times (P_u\cap \overline{Q})Z$
which implies $(P_u\cap M)\N^*X\cong (P_u\cap M)\times \N^*X$. In particular 
$(P_u\cap M)\Theta\cong (P_u\cap M)\times\Theta$.

We have  an $M$-equivariant  rational map $\mu_{\N^*}\circ\Phi^{-1}:\mu_X^{-1}(\aa^{pr}+M(\pp_u^-\cap \mm))\dashrightarrow M*_{P^-\cap M}(\aa^{pr}+(\pp_u^-\cap \mm))$,
that induces the rational map:
 $$\widetilde{\mu}_X:G*_{N_X}\Sigma \dashrightarrow G*_{N_X}(\aa^{pr}+M*_{P^-\cap M}(\pp_u^-\cap \mm)),$$
which factors the moment map $\mu_X$. To prove that $M_X$ is birational to  $G*_{N_X}(\aa^{pr}+M*_{P^-\cap M}(\pp_u^-\cap \mm))$
it remains to prove irreducibility of  general fibers for  $\widetilde{\mu}_X$.

The  irreducibility of  general fibers for $\mu_X|_{\N^*X} \rightarrow \aa+\overline{\pp}_u$ (see Proposition \ref{section_irr})
 implies the irreducibility of general fibers for the maps $\mu_X|_\Theta$,  
$M*_{P^-\cap M}\Theta \rightarrow M*_{P^-\cap M}(\aa^{pr}+(\pp_u^-\cap \mm))$
and ${\widetilde \mu}_X$.
\end{proof}

\begin{remark}\label{stab_T*X} Since  ${G\N^*X}$ is dense in $T^*X$ to study the stabilizer of a general cotangent vector $\eta_x\in T_x^*X$ 
it  suffice  to consider $\eta_x\in \N^*X$, and acting by $\overline{P}$ we may assume that $\xi=\mu_X(\eta_x)\in\aa^{pr}+ (\overline{\pp}_u\cap \mm)$.
From the description of $M_X$ we get that 
$G_{\eta_x}\subset \overline{S}\cap G_\xi$.
\end{remark}

\begin{proof}[Proof of the Theorem \ref{main_cover}] As we have seen the Galois group of the rational covering $G*_{\overline P} \mathcal N^*X\rightarrow T^*X$
is equal to $N_X/M$ and by Theorem \ref{M_X_formula}  the map $M_{G/P^-_0}\rightarrow M_X$ is the quotient by the action of $N_X/M$ which  commutes with the $G$-action.
By definition of Knop the little Weil group is the Galois group of the covering $M_{G/P^-_0}/\! \!/ G\rightarrow M_X/\! \!/ G$ (we note that $M_{G/P^-_0}/\! \!/ G \cong \aa$) which  is also equal to $N_X/M$.
\end{proof}

\end{document}